\documentclass[11 pt]{article}

\usepackage{amsmath}
\usepackage{amsthm}
\usepackage[linesnumbered,ruled,vlined]{algorithm2e}
\usepackage{xcolor}
\usepackage{dirtytalk}

\newtheorem{theorem}{Theorem}[section]
\newtheorem{lemma}[theorem]{Lemma}

\newtheorem{definition}{Definition}[section]
\newtheorem{assumption}[theorem]{Assumption}
\newtheorem{proposition}[theorem]{Proposition}

\theoremstyle{remark}
\newtheorem*{remark}{Remark}

\newcommand{\D}{\mathcal{D}}

\newcommand{\E}{\mathbb{E}}

\newcommand{\ra}{\rightarrow}
\newcommand{\R}{\mathbb{R}}
\newcommand{\Ra}{\Rightarrow}

\newcommand{\sphere}{\mathcal{S}}

\newcommand{\Unif}{\textsf{Unif}}

\newcommand{\vol}{\text{vol}}
\newcommand{\W}{\mathcal{W}}
\newcommand{\x}{x}
\newcommand{\y}{y}
\newcommand{\X}{\mathcal{X}}
\newcommand{\Y}{\mathcal{Y}}





\usepackage[utf8]{inputenc} 
\usepackage[T1]{fontenc}    
\usepackage{hyperref}       
\usepackage{url}            
\usepackage{booktabs}       
\usepackage{amsfonts}       
\usepackage{nicefrac}       
\usepackage{microtype}      
\usepackage{xcolor}         
\usepackage{defs} 
\usepackage[normalem]{ulem}

\title{Zeroth-Order Methods for Convex-Concave Minmax Problems: Applications to Decision-Dependent Risk Minimization}

\author{%
   Chinmay Maheshwari${}^{1}$\thanks{Equal contribution
}, Chih-Yuan Chiu${}^{1*}$, Eric Mazumdar$^1$,\\ {S. Shankar Sastry}$^1$, {Lillian J. Ratliff}$^2$\\
$^{1}$Electrical Engineering and Computer Sciences, University of California, Berkeley\\
$^2$Electrical and Computer Engineering, University of Washington, Seattle

}
\date{}

\allowdisplaybreaks

\newcommand{\ljr}[1]{\textcolor{Blue}{LJR: #1}}

\begin{document}

\maketitle

\begin{abstract}
  Min-max optimization is emerging as a key framework for analyzing problems of robustness to strategically and adversarially generated data. We propose a random reshuffling-based gradient free Optimistic Gradient Descent-Ascent algorithm for solving convex-concave min-max problems with finite sum structure. 
  We prove that the algorithm enjoys the same convergence rate as that of zeroth-order algorithms for convex minimization problems. We further specialize the algorithm to solve distributionally robust, decision-dependent learning problems, where gradient information is not readily available. Through illustrative simulations, we observe that our proposed approach learns models that are simultaneously robust against adversarial distribution shifts and strategic decisions from the data sources, and outperforms existing methods from the strategic classification literature.
  
\end{abstract}

\section{Introduction}
\label{sec: Intro}




The deployment of learning algorithms in real-world scenarios necessitates versatile and robust algorithms that operate efficiently under mild information structures. {Min-max optimization has been used as a tool ensure robustness in variety of domains e.g. robust optimization \cite{ben2009robust}, robust control \cite{hast2013pid}, to name a few. } Recently, min-max optimization has emerged as a promising framework for framing problems of algorithmic robustness against adversaries \cite{Goodfellow2014GANs,Steinhardt2017CertifiedDefense,Madry2017TowardsDeepLearningModelsResistanttoAdversarialAttacks}, strategically generated data \cite{Dong2018StrategicClassification, brown2020performative}, and distributional shifts in dynamic environments \cite{YuMazumdar2021FastDistributionallyRobustLearning}.

Despite this, recent works in machine learning and robust optimization on designing and analyzing stochastic algorithms for min-max optimization problems have largely operated on a number of assumptions that preclude their application to a broad range of real-world problems e.g., access to first-order oracles that provide exact gradients \cite{Yang2020GlobalConvergenceandVarianceReduction, Nouiehed2019SolvingAClassOfNonconvexMinMaxGames, Jin2020WhatIsLocalOptimality} or restrictive structural assumptions such as strong convexity \cite{Liu2019MinMaxWithoutGradients, Wang2020ZerothOrderAlgorithmsforNonconvexMinimaxProblems, Sadiev2020ZerothOrderAlgorithmsforSmoothSaddlePointProblems}. Moreover, the developed theory is often not well-aligned with the practical implementation of these algorithms in real-world machine learning applications. { For example, \cite{Beznosikov2020GradientFreeMethodsWithInexactOracle}
 propose zeroth-order methods for convex-concave problems but the proposed algorithm may not be suitable for machine learning applications where the objective function is a sum of large numbers of component functions (depending on the size of dataset). Indeed, in order to compute the gradient estimate at any iteration Beznosikov et al requires perturbing \textbf{all} the  functions which might not be suitable/possible for many applications.}
Furthermore, stochastic gradient methods are often used with random reshuffling (without replacement) in practice, yet their theoretical performance is usually characterized under the assumption of uniform sampling with replacement \cite{Bottou2009CuriouslyFastConvergenceOfSomeSGDAlgorithms, JainNagaraj2019SGDWithoutReplacement}.

 In this work, we do away with these assumptions and formulate a gradient-free (zeroth-order), random reshuffling-based algorithm with non-asymptotic convergence guarantees under mild structural assumptions on the underlying min-max objective. 
 {Our convergence guarantees are established by balancing the bias and variance of the zeroth-order gradient estimator \cite{Bravo2018BanditLearning}, using coupling-based arguments to analyze the correlations between iterates due to the random reshuffling procedure \cite{JainNagaraj2019SGDWithoutReplacement}, and exploiting the recent connections between the Optimistic Gradient Descent Ascent (OGDA) and Proximal Gradient algorithms \cite{MokhtariOzdaglar2020ConvergenceRO}}.

One of the primary problem areas in which such an algorithm becomes necessary is in learning from strategically generated or decision-dependent data, a classical problem in operations research (see, e.g., \cite{hellemo2018decision} and references therein).  This problem has garnered a lot of attention of late in the machine learning community under the name \say{performative prediction} \cite{Perdomo2020PerformativePrediction,Miller2021OutsideTheEchoChamber,brown2020performative} due to the growing recognition that learning algorithms are increasingly dealing with data from strategic agents. In such problems, assuming access to the response map of strategic agents is often too restrictive, and the introduction of agent's strategic responses into a convex loss function can often result in non-convex objectives. 

As an example of such a decision-dependent problem, consider a scenario in which a ride-sharing platform seeks to devise an adaptive pricing strategy which is responsive to changes in supply and demand. The platform observes the current supply and demand in the environment and adjusts the price to increase the supply of drivers (and potentially decrease the demand) as needed. Drivers, however, have the ability to adjust their availability, and can strategically create dips in supply to trigger price increases. Such gaming has been observed in real ride-share markets (see, e.g., \cite{hamilton2019uber,youn2019uber}) and results in negative externalities like higher prices for passengers. Importantly, in this situation, the platform does not observe precisely the decision making process of the drivers, only their strategically generated availability, and must learn to optimize through these agents' responses. This lack of precise knowledge regarding the data generation process, and the reactive nature of the data, motivate the use of game theoretic abstractions for the decision problem, as well as algorithms for finding solutions in the absence of full information.

Previous work analyzing this problem studies this phenomenon through the lens of risk minimization in which the data distribution is decision-dependent, and seeks out settings in which the decision maker can optimize the decision-dependent risk \cite{Miller2021OutsideTheEchoChamber}.  These works, however, do not account for model misspecification in their analysis. In particular, if the data generation model is incorrect, the performance of the optimal solution returned by their training methods may potentially degrade rapidly, something we explore in our experiments. 

We show that the decision-dependent learning (performative prediction) problem can be \emph{robustified} by taking a distributional robustness perspective on the original problem. Moreover, we show that, under mild assumptions, the distributionally robust decision-dependent learning problem can be transformed to a min-max problem and hence our zeroth-order random reshuffling algorithm can be applied. The gradient-free nature of our algorithm is important for applications where data is generated by strategic users that one must query; in these scenarios, the decision-maker is unlikely to access the best response map (data generation mechanism) of the strategic users, and hence will lack access to precise gradients. 



\paragraph{Contributions.} 
In this paper, we analyze the class of convex-concave min-max problems given by
\begin{align} \label{Eqn:Min-maxProblem}
    \min_{x \in \X} \max_{y\in \Y} \ L(x, y),
\end{align} 
where $\X \subset \R^{d_x}$, $\Y \subset \R^{d_y}$, and $L: \R^{d_x} \times \R^{d_y} \ra \R$ has the finite-sum structure given by $L := \frac{1}{n} \sum_{i=1}^n L_i$, where $L_1, \ldots, L_n: \R^{d_x} \times \R^{d_y} \ra \R$ denote $n$ individual loss functions. This formulation is ubiquitous in machine learning applications, where the overall loss objective is often the average of the loss function evaluated over each data point in a dataset. 

The contributions of this paper can be summarized as follows. 
\begin{enumerate}[label = \Roman*), align = left, leftmargin=*, topsep=-2pt, itemsep=-1pt]
    \item We propose an efficient zeroth-order random reshuffling-based OGDA algorithm for a convex-concave min-max optimization problem, \emph{without} assuming any other structure on the curvature of the min-max loss (e.g., strong convexity or strong concavity). We provide (to our knowledge) the \emph{first} non-asymptotic analysis of OGDA algorithm \emph{with} random reshuffling and zeroth-order gradient information. 
    
    \item As an important application, we formulate
    the \emph{Wasserstein distributionally robust learning with decision-dependent data} problem as a constrained finite-dimensional, smooth convex-concave min-max problem of the form \eqref{Eqn:Min-maxProblem}. In particular, we consider the setting of learning from strategically generated data, where the goal is to fit a generalized linear model, and where an ambiguity set is used to capture model misspecification regarding the data generation process. This setting encapsulates a distributionally robust version of the recently introduced problem of strategic classification \cite{Hardt2016StrategicClassification}. We show that this problem, under mild assumptions on data generation model and the ambiguity set, can be transformed into a convex-concave min-max problem to which our algorithm applies.
    
    \item We complement the theoretical contributions of this paper by presenting illustrative numerical examples.
    
\end{enumerate}

\section{Related Work}
\label{sec:RelatedWorks}




Our work draws upon the existing literature on zeroth-order methods for min-max optimization problems, decision-dependent learning (performative prediction), and  distributionally robust optimization.

\paragraph{Zero-Order Methods for Min-Max Optimization.}
\label{subsec: ZO Methods for Min-Max Optimization}

Zeroth-order methods provide a computationally efficient method for applications in which first-order or higher-order information is inaccessible or impractical to compute, e.g., when generating adversarial examples to test the robustness of black-box machine learning models \cite{Liu2019MinMaxWithoutGradients, Liu2020PrimerOnZOOptimization, Chen2017ZerothOrderOptimizationBasedBlackboxAttacks, Ilyas2018BlackboxAdversarialAttacks, Tu2019AutoZOOM, li2019nattack, Audet2017DerivativeFreeAndBlackboxOptimization}. 
Recently, Liu et al. and others \cite{Liu2019MinMaxWithoutGradients, Gao2018OnTheInformationAdaptiveVariantsOfTheADMM, Wang2020ZerothOrderAlgorithmsforNonconvexMinimaxProblems} provided the first non-asymptotic convergence bounds for zeroth-order algorithms, based on analysis methods for gradient-free methods in convex optimization \cite{Nesterov2017RandomGradientFreeMinimization}. However, these works assume that the min-max objective is either strongly concave in the maximizing variable \cite{Liu2019MinMaxWithoutGradients, Wang2020ZerothOrderAlgorithmsforNonconvexMinimaxProblems} or strongly convex \cite{Gidel2017FrankWolfeAlgorithmsforSaddlePointProblems} in the minimizing variable, an assumption that fails to hold in many applications \cite{Dong2018StrategicClassification, YuMazumdar2021FastDistributionallyRobustLearning}. In contrast, the zeroth-order algorithm presented in this work provides non-asymptotic guarantees under the less restrictive assumption that the objective function is convex-concave. In particular, we present the first (to our knowledge) zeroth-order variant of the Optimistic Gradient Ascent-Descent (OGDA) algorithm \cite{MokhtariOzdaglar2020ConvergenceRO, MokhtariOzdaglar2020AUnifiedAnalysis}. 

 In single-variable optimization problems, first-order stochastic gradient descent algorithms are empirically observed to converge faster when random reshuffling (RR, or sampling without replacement) is deployed, compared to sampling with replacement \cite{RechtRe2011ParallelSGA, Bottou2009CuriouslyFastConvergenceOfSomeSGDAlgorithms}. Although considerably more difficult to analyze theoretically, gradient-based RR methods have recently been shown to enjoy faster convergence when the underlying objective function is convex \cite{Shamir2016WithoutReplacementSF, JainNagaraj2019SGDWithoutReplacement, Mishchenko2021RandomReshufflingSimpleAnalysis, HaoChen2019RandomSB, Safran2019HowGoodIsSGDwithRandomReshuffling, GurbuzbalabanOzdaglar2021WhyRRBeatsSGD, Rajput2020ClosingConvergecneGapSGDWithoutReplacement}. Recently, these theoretical results have been extended to first-order methods for convex-concave min-max optimization problems \cite{YuMazumdar2021FastDistributionallyRobustLearning}; in this paper, we further extend these results to the case of zero-order algorithms. Specifically, we present the first (to our knowledge) non-asymptotic convergence rates for zeroth-order random reshuffling min-max optimization algorithms.
 
 
 
 
\paragraph{Distributionally Robust Optimization.}
\label{subsec: DRSL}

\textit{Distributionally Robust Optimization (DRO)} seeks to find solutions to optimization problems (e.g., supervised learning tasks) robust against changes in the data distribution between training and test time \cite{Madry2017TowardsDeepLearningModelsResistanttoAdversarialAttacks, YuMazumdar2021FastDistributionallyRobustLearning}. These distributional differences may arise due to imbalanced data, sample selection bias, or adversarial perturbations or deletions \cite{Quionero-Candela2009DatasetShiftInML, Madry2017TowardsDeepLearningModelsResistanttoAdversarialAttacks}, and are often modeled as min-max optimization problems, in which the classifier and an adversarial noise component are respectively modeled as the minimizer and maximizer of a common min-max loss objective \cite{Bagnell2005RobustSupervisedLearning, Bertsimas2010TheoryApplicationsOfRobustOptimization, Gabrel2014RecentAdvancesRobustOptimizationOverview, Gorissen2015PracticalGuideToRobustOptimization, RahimanMerotra2019DROReview}. In particular, the noise is assumed to generate the worst possible loss corresponding to a bounded training data distribution shift, with the bound given by either the $f$-divergence or Wasserstein distance. \cite{YuMazumdar2021FastDistributionallyRobustLearning, BenTal2013RobustSolutionsOptimizationProblems, NamkoongDuchi2016SGMforDRO, Hu2018DoesDRSLGiveRobustClassifiers, ShafieezadehAbadeh2015DistributionallyRL}. While this work considers adversarial noise in generated data, largely in a worst-case context, it has yet to capture strategically generated data wherein a data source generates data via a best response mapping. 

\paragraph{Strategic Classification and Performative Prediction.}
\label{subsec: Strategic Classification}
\textit{Strategic classification} \cite{Hardt2016StrategicClassification, Dong2018StrategicClassification, Khajehnejad2019OptimalDecisionMaking, SessaKamgarpour2020LearningToPlaySequentialGames} and \textit{performative prediction} \cite{Perdomo2020PerformativePrediction, MendlerDunner2020StochasticOptimization, Miller2021OutsideTheEchoChamber,  Drusvyatskiy2020StochasticOptimization}  concern supervised learning problems in which the training data distribution shifts in response to the deployed classifier or predictor more generally. This setting naturally arises in machine learning applications in which the selection of the deployed classifier either directly changes the training data (e.g., decisions based on credit scores, such as loan approvals, themselves change credit scores), or prompts the data source to artificially alter their attributes (e.g. withdrawals during bank runs spur worried clients to make more withdrawals) \cite{Perdomo2020PerformativePrediction, MendlerDunner2020StochasticOptimization, Miller2021OutsideTheEchoChamber}. Here, the learner accesses only perturbed features representing the strategic agents' best responses to a deployed classifier, and not the true underlying features \cite{Dong2018StrategicClassification}. This is a recently introduced formulation to machine learning; the results in this body of literature (to our knowledge) have not introduced the concept of robustness to model misspecification or the data generation process, in the same manner as we capture in this work.

\section{Preliminaries}
\label{sec: Preliminaries}

Recall that in this paper, we consider the class of convex-concave min-max problems given by:
\begin{align} \label{Eqn:Min-maxProblem2}
    \min_{x \in \X} \max_{y\in \Y} \ L(x, y),
\end{align} 
where $\X \subset \R^{d_x}$, $\Y \subset \R^{d_y}$, and $L := \frac{1}{n} \sum_{i=1}^n L_i$, where $L_1, \ldots, L_n: \R^{d_x} \times \R^{d_y} \ra \R$ denote $n$ individual loss functions. For convenience, we denote $d := d_x + d_y$. 

\begin{assumption} \label{Assumption:MainTheorem}
The following statements hold:
\begin{enumerate}[label=(\roman*),align=left,leftmargin=*,widest=ii, topsep=0pt, itemsep=0pt]
    \item\label{assm: compact} The sets $\X \subset \R^{d_x}$ and $\Y \subset \R^{d_y}$ are convex and compact.
    
    \item \label{assm: Lipschitz} The functions $L_1, \cdots, L_n: \R^d \ra \R$ are convex in $x \in \R^{d_x}$ for each $y \in \R^{d_y}$, concave in $y \in \R^{d_y}$ for each $x \in \R^{d_x}$, and $G$-Lipschitz and $\ell$-smooth in $(x, y) \in \R^d$ (which implies that $L: \R^d \ra \R$, by definition, also possesses the same properties).
\end{enumerate}
\end{assumption}

For ease of exposition, we denote $u := (x, y)$, $M_L := \sup_{u \in \X \times \Y} |L(u)|$, $D := \sup_{u, u' \in \X \times \Y} \| u - u' \|_2$, and define the operators $F, F_i: \R^d \ra \R^d$, for each $i \in [n]$, by:
\begin{align} \label{Eqn: F, Def}
    F(u) &:= \begin{bmatrix}
    \nabla_{x} L(u) \\
    - \nabla_{y} L(u)
    \end{bmatrix}, \hspace{1cm}
    F_i(u) := \begin{bmatrix}
    \nabla_{x} L_i(u) \\
    - \nabla_{y} L_i(u)
    \end{bmatrix}, \ \ \forall \ i \in [n].
\end{align}
Observe that under Assumption \ref{Assumption:MainTheorem}, $M_L, D < \infty$, and $F$ and each $F_i$ are monotone\footnote{A function \(F:\R^d\ra\R^d\) is called \emph{monotone} if \(\lara{ F(x)- F(y),x-y}\geq 0\) for all \(x,y\in\R^d\).}. Finally, we define the \textit{gap function $\Delta: \R^d \ra [0, \infty)$} associated with the loss $L$ by
\begin{align} \label{Eqn: Gap Function}
    \Delta(x, y) := L(x, y^\star) - L(x^\star, y) \geq 0,
\end{align}
where $u^\star := (x^\star, y^\star) \in \X \times \Y$ denotes any min-max saddle point of the overall loss $L(x, y)$, and $(x, y) \in \X \times \Y$ denotes any feasible point. This gap function allows us to measure the convergence rate of our proposed algorithm. To this end, we define the $\epsilon$-optimal saddle-point of \eqref{Eqn:Min-maxProblem2} as follows.

\begin{definition}[\textbf{$\epsilon$-optimal saddle point solution}]
A feasible point $(x, y) \in \X \times \Y$ is said to be an $\epsilon$-optimal saddle-point solution of \eqref{Eqn:Min-maxProblem2} if
\begin{align*}
    \Delta(x, y) = L(x, y^\star) - L(x^\star, y) \leq \epsilon,
\end{align*}
\end{definition}






\section{Algorithms and Performance}
\label{sec: Algorithms and Performance}

In this section we introduce a gradient-free version of the well-studied Optimistic Gradient Descent Ascent (OGDA) algorithm, and give finite time rates showing that it can efficiently find the saddle point in constrained convex-concave problems.
\subsection{Zero-Order Gradient Estimates}
\label{subsec: Zero-Order Gradient Estimates}

In our zero-order, random-reshuffling based variant of the OGDA algorithms, we use the one-shot randomized gradient estimator in \cite{Spall1997AOneMeasurementFormofSimultaneousPerturbationStochasticApproximation, Flaxman2005OnlineConvexOptimization, Gao2018OnTheInformationAdaptiveVariantsOfTheADMM, Liu2019MinMaxWithoutGradients}.
In particular, given the current iterate $u \in \R^d$ and a \textit{query radius} $\varepsilon > 0$, we sample a vector \(v\) uniformly from unit sphere \(\sphere^{d-1}\) (i.e. $v \sim \Unif(\sphere^{d-1})$), and define the zeroth-order estimator $\hat{F}(u; \varepsilon, v) \in \R^d$ of the min-max loss $L(u)$ to be:
\begin{align*}
    \hat{F}(u; \varepsilon, v) := \frac{d}{\varepsilon} L(u + \varepsilon v) v
\end{align*}
Properties of this zeroth-order estimator, derived in \cite{Bravo2018BanditLearning}, are reproduced as Proposition A.4 in Appendix A.1.

\subsection{Optimistic Gradient Descent Ascent with Random Reshuffling (OGDA-RR)}
\label{subsec: OGDA-RR}

In this subsection, we formulate our main algorithm, Optimistic Gradient Descent Ascent with Random Reshuffling (OGDA-RR). In each epoch $t \in \{0, 1, \cdots, T-1\}$, the algorithm generates a uniformly random permutation $\sigma^t := (\sigma_1^t, \cdots, \sigma_n^t)$ of $[n] := \{1, \cdots, n\}$ independently of any other randomness, and fixes a query radius $\epsilon^t > 0$ and search direction $v_i^t \in \R^d$. (Note: query radii only depends on epoch indices $t$, and not on sample indices). For each index $i \in [n]$, we compute the OGDA-RR update as follows:
\begin{align} \label{Eqn: OGDA-RR, Update}
    {\normalsize u_{i+1}^t = \proj_{\X \times \Y} \Big(u_{i}^t - \eta^t \hat{F}_{\sigma_i^t}(u_{i}^t; \varepsilon^t,v_{i}^t)-\eta^t \hat{F}_{\sigma_{i-1}^t}(u_{i}^t;\varepsilon^t,v_{i}^t) + \eta^t\hat{F}_{\sigma_{i-1}^t}(u_{i-1}^t;\varepsilon^t,v_{i-1}^t) \Big)},
\end{align}
After repeating this process for $T$ epochs, the algorithm returns the step-size-weighted average of the iterates, $\tilde u^T := \frac{1}{n \cdot \sum_{t=0}^{T-1} \eta^t} \sum_{t=0}^{T-1} \sum_{i=1}^n \eta^t u_i^t$. Roughly, the weighting described in Theorem \ref{Thm: OGDA Convergence} below optimally balances the bias and variance of the zero-order gradient estimator in Section \ref{subsec: Zero-Order Gradient Estimates}.




 




 
 

\begin{algorithm} 

\SetAlgoLined

 \textbf{Input}: stepsizes $\eta^t,\varepsilon^t$, data points \(\{(x_i,y_i)\}_{i=1}^{n}\sim\mathcal{D},u_0^{(0)}\), time horizon duration $T$;
 
 \For{$t=0,1, \cdots, T-1$}{
$\sigma^t = (\sigma_1^t, \cdots, \sigma_n^t) \gets$ a random permutation of set \([n]\)\;

\For{$i=0,\ldots, n-1$}{
Sample $v_i^t \sim  \Unif(\sphere^{d-1})$

$u_{i+1}^t \gets \eqref{Eqn: OGDA-RR, Update}$


}

$u_{0}^{(t+1)} \gets u_{n}^t$

$u_{-1}^{(t+1)} \gets u_{n-1}^t$

 }
 
\textbf{Output:} $\tilde u^T := \frac{1}{n \cdot \sum_{t=0}^{T-1} \eta^t} \sum_{t=0}^{T-1} \sum_{i=1}^n \eta^t u_i^t$.
 
\caption{OGDA-RR Algorithm}
\label{Alg: OGDA-RR}
\end{algorithm}

\begin{theorem} \label{Thm: OGDA Convergence}
Let $L(u)$ denote the objective function in the constrained min-max optimization problem given by \eqref{Eqn:Min-maxProblem}, and let $u^\star=(\x^\star, \y^\star) \in \X \times \Y$ denote any saddle point of $L(u)$. Fix $\epsilon > 0$. Suppose Assumption \ref{Assumption:MainTheorem} holds, and the number of epochs $T$, step sizes sequence $\{\eta^t\}_{t=0}^{T-1}$, and query radii sequence $\{\varepsilon^t\}_{t=0}^{T-1}$ satisfy:
\begin{align*}
    \eta^t &:= \eta^0 \cdot (t+1)^{-3/4 + \chi}, \hspace{1cm} \forall \hspace{0.5mm} t \in \{0, 1, \cdots, T-1\}, \\
    \varepsilon^t &:= \varepsilon^0 \cdot (t+1)^{-1/4}, \hspace{1cm} \forall \hspace{0.5mm} t \in \{0, 1, \cdots, T-1\}, \\
    T &> \frac{
    1}{\varepsilon^4} \Bigg( \frac{3}{16n} D + \frac{5}{4n} \cdot C \cdot \max\left\{ \varepsilon^0, \eta^0, \eta^0 \varepsilon^0, \frac{\eta^0}{\varepsilon^0},  \frac{\eta^0}{(\varepsilon^0)^2} \right\} \Big( 1 + \frac{1}{\chi} \Big) \Bigg)^{\frac{4}{1-4\chi}},
\end{align*}
for some initial step size $\eta^0 \in \Big(0, \frac{1}{2\ell} \Big)$, initial query radius $\varepsilon^0 > 0$, parameter $\chi \in (0, 1/4)$, and constant:
\begin{align*}
    C &:= \max\big\{3nd D \ell, 18nd D G\ell, 54 ndG^2 + 18 ndD \ell M_L, 90nd GM_L, 36 ndM_L^2, \\
    &\hspace{2cm} 6 dG n^2 + 14 Gn^2 + 4nG, 6dM_L n, 3d D G\ell, 3d D \ell M_L \big\} > 0.
\end{align*}
Then the iterates $\{u_i^t\}$ generated by the OGDA-RR Algorithm (Alg. \ref{Alg: OGDA-RR}) satisfy:
\begin{align*}
    \E\big[\Delta(\tilde u^T) \big] < \epsilon.
\end{align*}
\end{theorem}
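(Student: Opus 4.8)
The plan is to combine three ingredients that are flagged in the introduction: the bias--variance control of the one-shot zeroth-order estimator from Proposition A.4, the proximal-point interpretation of OGDA from Mokhtari--Ozdaglar, and a coupling argument that decorrelates the random permutation from the iterates. I would begin by recording the estimator facts: averaging $\hat F(u;\varepsilon,v)$ over $v \sim \Unif(\sphere^{d-1})$ yields the gradient of a smoothed surrogate of $L$ whose bias relative to $F(u)$ is of order $\ell d \varepsilon$, while its second moment is of order $d^2 M_L^2/\varepsilon^2$. These two scalings are exactly what forces a trade-off in the query radius (small bias wants small $\varepsilon$, small variance wants large $\varepsilon$), and they are the source of the terms $\eta^0\varepsilon^0$, $\eta^0/\varepsilon^0$ and $\eta^0/(\varepsilon^0)^2$ appearing inside the constant $C$.

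Next I would derive the one-step OGDA inequality. Starting from nonexpansiveness of the projection onto $\X\times\Y$ and expanding $\|u_{i+1}^t-u^\star\|^2$, I would produce a recursion of the form $\|u_{i+1}^t-u^\star\|^2 \le \|u_i^t-u^\star\|^2 - 2\eta^t\langle F_{\sigma_i^t}(u_i^t),\,u_i^t-u^\star\rangle + (\text{optimistic correction}) + (\text{zeroth-order error})$, where the optimistic correction telescopes across consecutive inner steps, since the extra $\hat F_{\sigma_{i-1}^t}$ terms in the update \eqref{Eqn: OGDA-RR, Update} are designed precisely to cancel in that manner, and the error term collects the bias and second-moment contributions. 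Monotonicity of $F$ together with the convex--concave saddle inequality then converts the accumulated inner products into the gap $\Delta$, which is the quantity the theorem controls.

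The heart of the argument is the summation over an epoch under random reshuffling. Summing the inner products across $i=1,\dots,n$, the key observation is that $\sum_i F_{\sigma_i^t}(\cdot)$ equals $nF(\cdot)$ if all operators are evaluated at a common anchor point; to exploit this I would couple $F_{\sigma_i^t}(u_i^t)$ with $F_{\sigma_i^t}$ evaluated at the epoch's first iterate, bounding the drift $\|u_i^t-u_0^t\|$ by a quantity of order $n\eta^t(G + dM_L/\varepsilon^t)$ via the Lipschitz and smoothness properties of each $L_i$. Taking expectation over the uniformly random permutation then annihilates the first-order cross terms in the spirit of Jain--Nagaraj, leaving a clean $-2\eta^t n\langle F,\,\cdot-u^\star\rangle$ term plus an $O((\eta^t)^2)$ drift penalty. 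I would then telescope $\|u_i^t-u^\star\|^2$ over all inner steps and epochs, bound the endpoint distances by $D$, divide by $n\sum_t\eta^t$, and apply Jensen to the convex--concave gap function to move the step-size-weighted average inside $\Delta$. The resulting bound on $\E[\Delta(\tilde u^T)]$ is a sum of three decaying series — a bias series in $\sum \eta^t\varepsilon^t$, a variance series in $\sum \eta^t/(\varepsilon^t)^2$, and a reshuffling series in $\sum(\eta^t)^2$ — and the exponents $\eta^t=\eta^0(t+1)^{-3/4+\chi}$, $\varepsilon^t=\varepsilon^0(t+1)^{-1/4}$ are chosen so that each series is summable against the $T^{1/4+\chi}$ growth of the normalizer; the stated lower bound on $T$ is then exactly what drives the total below $\epsilon$.

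I expect the main obstacle to be this third step, namely handling the without-replacement correlations and the high-variance zeroth-order noise simultaneously. Unlike the purely first-order reshuffling analysis, here the per-step estimator has magnitude of order $dM_L/\varepsilon^t$, so both the drift bound $\|u_i^t-u_0^t\|$ and the coupling error terms are inflated by a factor $1/\varepsilon^t$; keeping these under control while still producing a $1/(\varepsilon^t)^2$-summable series, and carefully verifying that the permutation expectation genuinely eliminates the first-order reshuffling bias (and not merely a second-order remainder), is the delicate part of the argument.
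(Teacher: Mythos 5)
Your outline is essentially correct and reaches the theorem by the same high-level decomposition (estimator bias/variance from Proposition \ref{Prop: App, ZO grad mean, variance}, the proximal-point view of the optimistic correction as in Lemma \ref{Lemma: App, Inner Product of F}, telescoping over epochs, and the step-size/query-radius series against the $T^{1/4-\chi}$ normalizer). However, your treatment of the reshuffling correlations takes a genuinely different route from the paper's. You propose to decorrelate by anchoring every $F_{\sigma_i^t}$ at the epoch's first iterate $u_0^t$ (which is independent of $\sigma^t$, so each $\sigma_i^t$ is marginally uniform given $u_0^t$ and the sum over a full pass recovers $nF$ exactly), paying for this with a drift bound $\Vert u_i^t - u_0^t\Vert = O\bigl(n\eta^t d(G + M_L/\varepsilon^t)\bigr)$. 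The paper instead works iterate-by-iterate: Lemma \ref{Lemma: App, Gap Difference} bounds the gap between $\E[\Delta(u_{i+1}^t)]$ and the biased quantity $\E[L_{\sigma_i^t}(x_{i+1}^t,y^\star)-L_{\sigma_i^t}(x^\star,y_{i+1}^t)]$ by $\frac{G}{n}\sum_r \W_2(\D_{i+1,t},\D_{i+1,t}^{(r)})$ via Kantorovich duality, and then Lemma \ref{Lemma: App, u and u tilde bound} controls that Wasserstein distance by explicitly coupling the unconditioned permutation with the permutation conditioned on $\{\sigma_{i-1}^t=r\}$ through a single swap, comparing the zeroth-order OGDA iterates to first-order OGDA and then to proximal-point iterates under the two permutations. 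Both mechanisms yield an error of the same order, $O\bigl(n^2 d(\eta^t)^2(G+M_L/\varepsilon^t)\bigr)$ per epoch, so either closes the argument with the stated exponents. What the paper's coupling buys is a statement about each iterate's gap directly (so the output $\tilde u^T$ as defined needs no further correction); your anchoring buys a more elementary argument that avoids conditional laws and optimal transport entirely, at the cost of an extra Lipschitz correction $\Delta(u_i^t)\le\Delta(u_0^t)+2G\Vert u_i^t-u_0^t\Vert$ to pass from the epoch anchors back to the step-size-weighted average of all inner iterates. One small point of care in your route: the one-step inequality should be stated with the operator evaluated at $u_{i+1}^t$ (the perturbed proximal-point form), not at $u_i^t$, so that monotonicity and the convex-concave saddle inequality apply cleanly before the anchoring step.
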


\begin{remark}
Note that our OGDA-RR algorithm is more computationally efficient than Alg. 2 in \cite{YuMazumdar2021FastDistributionallyRobustLearning}, \emph{even if} one replaces the gradient estimates with true gradient values. This is because Alg. 2 in \cite{YuMazumdar2021FastDistributionallyRobustLearning} requires \(M\sim \mc{O}(\log(n))\) inner loop iterations to approximate a proximal point update. Here, we avoid this restriction by exploiting the recent perspective that the OGDA update is a perturbed proximal point update \cite{MokhtariOzdaglar2020AUnifiedAnalysis,MokhtariOzdaglar2020ConvergenceRO}. For more details, see Appendix A.2 for the proof of Theorem \ref{Thm: OGDA Convergence}. 
\end{remark}




\section{Applications to Decision-Dependent DRO}
\label{sec: Applications (or Experiments)}

In this section we discuss a novel convex-concave min-max reformulation of a class of decision-dependent distributional robust risk minimization problems, which reflects the need for learning classifiers that are simultaneously robust to strategic data sources and adversarial model-specification. In particular, we present a distributionally robust formulation of strategic classification \cite{Dong2018StrategicClassification} with generalized linear loss, a semi-infinite optimization problem that can be reformulated to a finite-dimensional convex-concave min-max problem.


Strategic classification is an emerging paradigm in machine learning which attempts to ``close the loop"--- i.e., account for data (user) reaction at training time---while designing classifiers to be deployed in strategic environments in the real world, where deploying \emph{na\"ive} classifier (designed ignoring the distribution shift) can be catastrophic.
Modeling the exact behavior of such strategic interactions is very complex, since the decision-maker (learner) does not have access to the strategic users' preferences and hence lacks access to their best response function. To overcome this difficulty, we use a natural model for these strategic behaviors that has been exploited in Dong et. al.(2018), and then impose robustness conditions (in the form of an ambiguity set on the decision-dependent  data distribution) to capture model misspecification.
To facilitate the discussion, we provide a primer on decision-dependent DRO in the next subsection. 

\subsection{Primer on decision-dependent distributionally robust optimization}

Consider a \emph{generalized linear problem}, where the goal is to estimate the parameter \(\param\in \Theta\), which is assumed to be a compact set, by solving the following convex optimization program:
\[
\inf_{\param\in\Theta} \avg_{\mc{D}} \ls{ \phi\lr{\lara{\bx,\param}} - \by \lara{\bx,\param} }
\]
where \(\phi: \R\ra \R\) is a smooth convex function and the tuple \((\bx,\by)\in \R^d\times\{-1,+1\}\) is sampled from an unknown distribution \(\mc{D}\), often approximated by the empirical distribution of a set of observed data.
The generalized linear model encompasses a wide range of machine learning formulations \cite{Mccullagh2019GeneralizedLinearModels}.

A distributionally robust generalized linear problem, on the other hand, minimizes the worst case expectation over an uncertainty set \(\pSet\) in the space of probability measures. This setup can be envisioned as a game between a learning algorithm and an adversary. Based on parameters chosen by the learning algorithm, the adversary then picks a probability measure from the uncertainty set which maximizes the risk for that choice of parameter:
\[
\inf_{\param} \sup_{\dist\in\pSet} \avg_{\dist}\ls{ \phi\lr{\lara{\bar{x},\param}} - \bar{y} \lara{\bar{x},\param} },
\]
where \((\bar{x},\bar{y})\sim\mb{P}\in \mc{P}\).
Typically \(\pSet\) is chosen as a Wasserstein ball around the empirical distribution \(\td{\mc{D}}_\numData\) of a set of $\numData$ observed data points, $\{(\xData_i,\yData_i) \in \R^d\times \{-1,1\}\}_{i=1}^\numData$, sampled independently from the data distribution \(\mc{D}\). Then, for any 
\(\delta>0\) the uncertainty set \(\pSet\) is given by \(
\mb{B}_{\delta}(\td{\mc{D}}_{\numData}) = \{ \dist: 
\mc{W}(\dist,\td{\mc{D}}_\numData) \leq \delta \}\).  

A critique of the above problem formulation is that the underlying data distribution \(\mc{D}\) is considered fixed, while in many strategic settings underlying data distribution will depend on the classifier parameter \(\param\). \emph{Decision-dependent supervised learning} aims to tackle such distribution shifts. When specialized to the generalized linear model, the problem formulation becomes:
\[
\inf_{\param}\avg_{\mc{D}(\param)}\ls{ \phi\lr{\lara{\bar{x},\param}} - \bar{y} \lara{\bar{x},\param} }, 
\]
where \((\bar{x},\bar{y})\sim\mc{D}(\theta)\).
In this work, we take a step forward and work with the \emph{distributionally robust decision-dependent generalized linear model}, defined as:
\begin{align} \label{Eqn: WDRSC Problem}
\inf_{\param}\sup_{\dist\in\pSet(\param)}\avg_{\dist}\ls{ \phi\lr{\lara{\bar{x},\param}} - \bar{y} \lara{\bar{x},\param} }, 
\end{align}
where \((\bar{x},\bar{y})\sim \mb{P}\in \mc{P}(\theta)\) and
\(\pSet(\param)=\mb{B}_{\delta}(\td{\mc{D}}_{\numData}(\param))\). Here, the dependence of $\mathbb{P}$ on the choice of classifier $\theta$ is captured by its inclusion in \(\pSet(\param)=\mb{B}_{\delta}(\td{\mc{D}}_{\numData}(\param))\). To describe decision-dependent distribution shifts \(\td{\mc{D}}_n(\theta)\), we restrict our focus to the setting of strategic classification. The following subsection formalizes our setting.

\subsection{Model for strategic response}\label{ssec: ModelOfResponse}
 Below, we denote the data points sampled from \emph{true distribution} by $(\xData_i,\yData_i)\sim\mc{D}$ where $\mc{D}$ is a unknown, underlying distribution. For ease of presentation, we associate each data point index $i$ with an agent. For each agent \(i\in[n]\), let \( u_i(x;\theta,\xData_i,\yData_i) \in \R\) denote its utility function that a strategic agent seeks to maximize.  In other words, when a classifier parametrized by \(\theta\in\R^d\) is deployed, the agent \(i\in[n]\) responds by reporting  \(b_i(\theta,\xData_i,\yData_i)\), defined as:
 \[
 b_i(\theta,\xData_i,\yData_i) \in \arg\max_x u_i(x;\theta,\xData_i,\yData_i).
 \]
Note that we allow different agent to have different utility function. 

We now impose the following assumptions on the utility functions; these are crucial for ensuring guaranteed convergence of our proposed algorithms.

\begin{assumption}\label{assm:2}
For each agent \(i\in[n]\), define \(u_i(x;\theta,\xData_i,\yData_i)\defas  \frac{1-\yData_i}{2}\lara{ x,\theta}-g_i(x-\xData_i)\),  where $g_i:\mb{R}^d\to \mb{R}$ satisfies:
    \begin{enumerate}[label=(\roman*),align=left,leftmargin=*,widest=iii,itemsep=-1mm]
        \item \label{assm:21}$g_i(x)>0$ for all $x\neq 0$;
        \item \label{assm:22} $g_i$ is convex on $\mb{R}^d$;
        \item \label{assm:23}$g_i$ is positive homogeneous\footnote{A function \(f:\R^d\ra\R\) is \emph{positive homogenous of degree r} if for any scalar \(\alpha>0\) and \(x\in\R^d\) we have \(f(\alpha x) = \alpha ^ r f(x)\)} of degree $\homDegree>1$;
        \item \label{assm:24} Its convex conjugate \(g_i^\ast(\theta)\defas \sup_{x\in\R^d}\lara{ x,\theta} - g_i(x)\) is \(G_i\)-Lipschitz and \(\bar{G}_i\)-smooth on \(\Theta\).
    \end{enumerate}
\end{assumption}

As is pointed out in Dong et. al. (2018), a large class of functions \(g(\cdot)\) satisfy the requirements posited in Assumption \ref{assm:2}. For example, for any arbitrary norm and any \(\homDegree>1\) the function \(g(x) = \frac{1}{\homDegree}\Vert x \Vert^\homDegree\)  is a candidate.  
Note that these assumptions are not very restrictive and capture a large variety of practical scenarios \cite{Dong2018StrategicClassification}. A natural consequence of the above modeling paradigm is that \(b_i(\theta,\xData_i,+1) = \xData_i\). To wit, the agents act strategically only if their true label is \(-1\). This is a reasonable setting for many real world applications \cite{Dong2018StrategicClassification}. We now present a technical lemma which will be helpful in subsequent presentation. 
\begin{lemma}[Dong et. al. (2018)] \label{lem: ConvexityBR}
  Under Assumption \ref{assm:2}, for each agent \(i\in[n]\), the set of best responses \(\arg\max_x u_i(x;\theta,\xData_i,\yData_i)\) is finite and bounded. The function \(\theta\mapsto \lara{ b_i(\theta,\xData_i,\yData_i),\theta}\) is convex. To wit, for any \(i\in[n]\): \(\lara{ b_i(\theta,\xData_i,\yData_i), \theta} = \lara{ \xData_i,\theta} + \frac{1-\yData_i}{2}q g_i^*(\theta)\) where \(\frac{1}{\homDegree}+\frac{1}{q} = 1\)
\end{lemma}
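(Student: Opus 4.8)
The plan is to treat the two possible labels separately and then reduce the entire computation to properties of the Fenchel conjugate $g_i^*$. First I would record that positive homogeneity of degree $\homDegree>1$ forces $g_i(0)=0$ (let $\alpha\to 0^+$ in $g_i(\alpha x)=\alpha^{\homDegree}g_i(x)$), so together with $g_i(z)>0$ for $z\neq0$ the function $g_i$ attains its unique minimum at the origin. When $\yData_i=+1$ the coefficient $\frac{1-\yData_i}{2}$ vanishes, so $b_i$ maximizes $-g_i(x-\xData_i)$, giving the single point $b_i=\xData_i$ and hence $\lara{b_i,\param}=\lara{\xData_i,\param}$, which already matches the claimed formula since its second term carries the factor $0$. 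The remaining work is the case $\yData_i=-1$.

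For $\yData_i=-1$ I would substitute $z=x-\xData_i$ to rewrite the best-response problem as $\max_z\,[\lara{z,\param}-g_i(z)]+\lara{\xData_i,\param}$, whose maximizer $z^\star$ satisfies $z^\star\in\partial g_i^*(\param)$. Because Assumption \ref{assm:2}\ref{assm:24} makes $g_i^*$ differentiable (smooth) on $\Theta$, the subdifferential is the singleton $\{\nabla g_i^*(\param)\}$, so the maximizer $z^\star=\nabla g_i^*(\param)$ is unique; this yields both finiteness (a single point) and boundedness, since $G_i$-Lipschitzness of $g_i^*$ gives $\|z^\star\|=\|\nabla g_i^*(\param)\|\le G_i$ and thus $\|b_i\|\le G_i+\|\xData_i\|$.

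The heart of the argument is computing $\lara{z^\star,\param}$, and the key step is to establish that $g_i^*$ is itself positively homogeneous, of the conjugate degree $q=\homDegree/(\homDegree-1)$. I would prove this by the scaling $z\mapsto\lambda^{1/(\homDegree-1)}z$ inside the definition of the conjugate: for $\lambda>0$ one gets $g_i^*(\lambda\param)=\lambda^{q}g_i^*(\param)$, using $1+\tfrac{1}{\homDegree-1}=\tfrac{\homDegree}{\homDegree-1}=q$. Applying Euler's identity to the differentiable, degree-$q$ homogeneous function $g_i^*$ then gives $\lara{\nabla g_i^*(\param),\param}=q\,g_i^*(\param)$, i.e. $\lara{z^\star,\param}=q\,g_i^*(\param)$. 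Adding back $\lara{\xData_i,\param}$ yields $\lara{b_i,\param}=\lara{\xData_i,\param}+\frac{1-\yData_i}{2}\,q\,g_i^*(\param)$ (the coefficient equalling $1$ here), completing the formula; convexity of $\param\mapsto\lara{b_i,\param}$ is then immediate, since it is the sum of the linear term $\lara{\xData_i,\param}$ and the nonnegative multiple $\tfrac{1-\yData_i}{2}q$ of the convex conjugate $g_i^*$.

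I expect the main obstacle to be the homogeneity/Euler step above: one must pass from the assumed homogeneity of $g_i$ to that of $g_i^*$ and then invoke Euler's identity, taking care to use smoothness of $g_i^*$ (Assumption \ref{assm:2}\ref{assm:24}) rather than differentiability of $g_i$, which is not assumed. An alternative that avoids homogeneity of the conjugate would use differentiability of $g_i$ and Euler's identity $\lara{\nabla g_i(z^\star),z^\star}=\homDegree\,g_i(z^\star)$ together with the first-order condition $\param=\nabla g_i(z^\star)$ and the Fenchel--Young equality $g_i(z^\star)+g_i^*(\param)=\lara{z^\star,\param}$; but since only $g_i^*$ is assumed smooth, the conjugate-homogeneity route is the safer one.
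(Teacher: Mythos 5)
Your argument is correct, and it is essentially the standard proof of this fact: the paper itself does not prove Lemma 5.1 but imports it from Dong et al.\ (2018), and your route --- reducing the best response to $\arg\max_z[\langle z,\theta\rangle-g_i(z)]=\partial g_i^*(\theta)=\{\nabla g_i^*(\theta)\}$, deriving degree-$q$ positive homogeneity of $g_i^*$ by the scaling $z\mapsto\lambda^{1/(\homDegree-1)}z$, and applying Euler's identity --- is exactly the intended one. The only step you leave implicit is attainment of the supremum defining $g_i^*(\theta)$, which follows since positivity and degree-$\homDegree$ homogeneity give $g_i(z)\geq c\Vert z\Vert^{\homDegree}$ with $c=\min_{\Vert u\Vert=1}g_i(u)>0$, so the objective is coercive; with that one line added the proof is complete.
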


Against the preceding backdrop, we now present the convex-concave min-max reformulation of the Wasserstein Distributionally Robust Strategic Classification (WDRSC) problem.

\subsection{Reformulation of the WDRSC Problem}
\label{subsec: Reformulation of WDRSC}


The WDRSC problem formulation contains two main components---the \emph{strategic component} that accounts for a distribution shift \(\mc{D}(\theta)\) in response to the choice of classifier \(\theta\), and the \emph{adversarial component} that accounts for the uncertainty set \(\mc{P}(\theta)\). As per the modeling assumptions described in Section \ref{ssec: ModelOfResponse}, we have \((\xData_i,\yData_i) \sim \mc{D}\) and \((b_i(\theta,\xData_i,\yData_i),\yData_i)\sim \mc{D}(\theta)\) for all \(i\in[n]\). We now impose certain restrictions on the adversarial component that would enable us to reformulate the WDRSC problem as a convex-concave min-max optimization problem. Crudely speaking, we allow adversarial modifications on \emph{features} for all data points, but adversarial modifications on \emph{labels} only when the true label is \(+1\).

{For the distributionally robust strategic classification problem, we consider a specific form of uncertainty set \(\mc{P}(\theta)\) that allows us to reformulate the infinite-dimensional optimization problem as a finite-dimensional convex-concave min-max problem.  As described above, in our formulation, the features of a given data point $i$ can be perturbed strategically if $\Tilde{y}_i = -1$, but not if $\Tilde{y}_i = +1$. On top of the strategic perturbations we also consider the adversarial perturbations to the data points. Specifically, we also assume that the adversary can perturb both the features and label of a data point $i$ if $\Tilde{y}_i = 1$, but can only perturb the features and not the label if $\Tilde{y}_i = -1$. A rigorous exposition of this restriction is deferred to Appendix B.1. Under these assumptions, we now present a convex-concave min-max reformulation of the WDRSC problem.
}

\begin{theorem} \label{thm: WDRSC-convex-concave}
Let the strategic behavior of the agents be governed in accordance with Assumption \ref{assm:2}. Suppose $\phi$ is convex and $\beta$-smooth. In addition, suppose \(\R\ni x\mapsto \phi(x)+x \in\R\) is non-decreasing. Then the WDRSC problem \eqref{Eqn: WDRSC Problem} can be reformulated into the following convex-concave min-max problem:
\begin{align} \label{eq:reformpp-new}
    &\min_{(\theta,\alpha)} \max_{\gamma\in \mb{R}^n} \Bigg\{\alpha (\delta-\kappa) +\frac{1}{n}\sum_{i}\frac{1+\yData_i}{2}\lr{\phi\lr{\lara{b_i(\theta),\theta}}} +\gamma_i \lr{\lara{b_i(\theta),\theta}-\alpha\kappa} \\ \nonumber
    &\hspace{1cm} +\frac{1}{n}\sum_i\frac{1-\yData_i}{2}\lr{ \phi(\lara{b_i(\theta),\theta})+\lara{b_i(\theta),\theta}} \Bigg\}\\ \nonumber
    &\text{s.t.} \Vert\theta\Vert\leq \alpha/(\beta+1), \ \Vert\gamma\Vert_\infty\leq 1
\end{align}
where for any \(i\in[n]\), we have concisely written \(b_i(\theta,\xData_i,\yData_i)\) as \(b_i(\theta)\). 
\end{theorem}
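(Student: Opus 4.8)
The plan is to peel the problem apart into its two coupled layers---the strategic (decision-dependent) layer and the adversarial (distributionally robust) layer---and dualize each in turn. First I would use the strategic response model to make the decision-dependent empirical measure \emph{explicit}. By Lemma \ref{lem: ConvexityBR}, Assumption \ref{assm:2} forces \(b_i(\theta,\xData_i,+1)=\xData_i\) and yields the closed form \(\langle b_i(\theta),\theta\rangle = \langle\xData_i,\theta\rangle + \tfrac{1-\yData_i}{2}\,q\,g_i^\ast(\theta)\). Consequently \(\tilde{\mathcal{D}}_n(\theta)\) is nothing but the empirical distribution of the \(n\) points \((b_i(\theta),\yData_i)\), and the inner supremum in \eqref{Eqn: WDRSC Problem} becomes a genuine Wasserstein DRO problem over the ball \(\mathbb{B}_\delta(\tilde{\mathcal{D}}_n(\theta))\) centered at this explicit measure, whose dependence on \(\theta\) is now entirely through the known functions \(\langle b_i(\theta),\theta\rangle\).

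Next I would invoke strong duality for Wasserstein DRO \cite{ShafieezadehAbadeh2015DistributionallyRL}, which replaces the inner supremum of the generalized linear loss by
\[
\inf_{\alpha\ge 0}\ \alpha\delta + \frac1n\sum_{i=1}^n\ \sup_{(\bar x,\bar y)}\Big[\phi(\langle\bar x,\theta\rangle)-\bar y\langle\bar x,\theta\rangle - \alpha\, c\big((\bar x,\bar y),(b_i(\theta),\yData_i)\big)\Big],
\]
where the transport cost \(c\) encodes the admissible perturbations of Appendix B.1: features may always be moved (at cost \(\|\bar x - b_i(\theta)\|\)), whereas labels may be flipped only from \(+1\) to \(-1\) (at cost governed by \(\kappa\)) and never the reverse. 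The outer \(\inf_\alpha\) supplies the minimization variable \(\alpha\), the term \(\alpha\delta\) is the seed of the \(\alpha(\delta-\kappa)\) in \eqref{eq:reformpp-new}, and the constraint \(\|\theta\|\le\alpha/(\beta+1)\) will automatically subsume the dual nonnegativity \(\alpha\ge0\).

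The heart of the argument is the per-sample maximization, which I would split into a feature direction and a label direction. For the feature direction, writing \(s=\langle\bar x,\theta\rangle\) and optimizing the transport cost over all \(\bar x\) producing a fixed shift \(s-\langle b_i(\theta),\theta\rangle\) reduces the problem to a one-dimensional supremum with penalty \((\alpha/\|\theta\|)\,|s-\langle b_i(\theta),\theta\rangle|\); using the \(\beta\)-smoothness of \(\phi\) together with the monotonicity of \(x\mapsto\phi(x)+x\), this supremum is finite and is attained with \emph{no} feature perturbation exactly when \(\alpha\) dominates the effective slope of the loss along \(\theta\), i.e. when \(\|\theta\|\le\alpha/(\beta+1)\)---precisely the coupling constraint in \eqref{eq:reformpp-new}. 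For the label direction, the loss is \emph{linear} in \(\bar y\), so the discrete flip over \(\{+1,-1\}\) relaxes exactly to a box variable \(\gamma_i\in[-1,1]\) (the maximizer of the linear-in-\(\gamma_i\) expression sits at a vertex); this manufactures the bilinear terms \(\gamma_i(\langle b_i(\theta),\theta\rangle-\alpha\kappa)\) and, after collecting the flip-cost constants, the residual \(-\alpha\kappa\) folded into \(\alpha(\delta-\kappa)\). Splitting the sum according to \(\yData_i=+1\) (where \(b_i(\theta)=\xData_i\), so no strategic shift occurs) and \(\yData_i=-1\) then reproduces the two sums in the statement.

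Finally I would verify the convex--concave structure. For \(\yData_i=+1\) the map \(\theta\mapsto\langle b_i(\theta),\theta\rangle=\langle\xData_i,\theta\rangle\) is linear, so both \(\phi(\langle b_i(\theta),\theta\rangle)\) and the bilinear term are convex in \((\theta,\alpha)\) for every fixed \(\gamma_i\); for \(\yData_i=-1\), \(\theta\mapsto\langle b_i(\theta),\theta\rangle\) is convex by Lemma \ref{lem: ConvexityBR}, and composing with the convex, non-decreasing map \(x\mapsto\phi(x)+x\) keeps \((\phi+\mathrm{id})(\langle b_i(\theta),\theta\rangle)\) convex. The objective is affine, hence concave, in \(\gamma\), and the feasible set \(\{\|\theta\|\le\alpha/(\beta+1)\}\times\{\|\gamma\|_\infty\le1\}\) is convex (a second-order-cone constraint together with a box), giving a bona fide convex--concave min-max problem of the form \eqref{Eqn:Min-maxProblem}. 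I expect the main obstacle to be the per-sample inner supremum itself: one is maximizing a \emph{convex} loss against a concave transport penalty, so finiteness is delicate, and pinning down the \emph{exact} threshold \(\|\theta\|\le\alpha/(\beta+1)\) (rather than merely some threshold) together with the admissible-perturbation bookkeeping that produces the constants \(\kappa\) and \(\alpha(\delta-\kappa)\) is where the smoothness and monotonicity hypotheses on \(\phi\) must be exploited most carefully.
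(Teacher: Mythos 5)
Your proposal is correct and follows essentially the same route as the paper's proof: dualize the inner Wasserstein supremum over the decision-dependent empirical measure (with the restricted label-flip cost), resolve the per-sample feature supremum via the threshold condition $\Vert\theta\Vert\le\alpha/(\beta+1)$ (the paper cites \cite[Lemma A.1]{YuMazumdar2021FastDistributionallyRobustLearning} for exactly this step), relax the binary label flip to the box variable $\gamma_i\in[-1,1]$, and verify convexity by splitting on $\yData_i=\pm 1$ using Lemma \ref{lem: ConvexityBR} and the monotonicity of $x\mapsto\phi(x)+x$. The only cosmetic difference is that the paper writes out the infinite-dimensional program over conditional measures explicitly before invoking duality, whereas you cite the DRO strong-duality formula directly.
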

The proof of Theorem \ref{thm: WDRSC-convex-concave} is presented in Appendix B.2. 

\begin{remark}
The non-decreasing assumption on the map \(\R\ni x\mapsto \phi(x)+x \in\R\) is not overly restrictive; in fact, it is satisfied by the logistic regression model in supervised learning (see Appendix C). 
\end{remark}


{
\begin{remark}
Note that we can convert the smooth convex-concave minmax problem \eqref{eq:reformpp-new} into a non-smooth convex minimization problem by explictly taking maximization over \(\gamma\).  But we refrain from doing as it has been observed \cite{YuMazumdar2021FastDistributionallyRobustLearning} that solving the smooth minimax optimization problem is faster than solving the non-smooth problem. In fact, we have presented an experimental study in Appendix C which corroborates this observation. 
\end{remark}
}
Throughout the rest of this paper, we denote the min-max objective in \eqref{eq:reformpp-new} by \(L(\alpha,\theta,\gamma)\).


\section{Empirical Results}\label{sec: ExpResults}
{In this section we deploy zeroth-order OGDA algorithm with random reshuffling to solve the convex concave reformulation of WDRSC as presented in \eqref{eq:reformpp-new}. We point out that in order to solve \eqref{eq:reformpp-new}, the zeroth-order method should only be applied to estimate the gradient with respect to \(\theta\). This is because the gradient with respect to other variables, namely \((\alpha,\gamma)\), can be exactly computed. Specifically, to compute derivative with respect to \(\theta\) the designer must know the best response function which is often not available and it can only be queried. 

We now present some illustrations of the empirical performance of our proposed algorithm, as well as empirical justification for solving the WDRSC problem over existing prior approaches to strategic classification. 
}

\subsection{Experimental Setup}

Our first set of empirical results uses synthetic data to illustrate the effectiveness of our algorithms. The datasets used in this section are constructed as follows: the ground truth classifier \(\theta^\star\) and features \(\xData_i\) are sampled as \(\theta^\star \sim \mc{N}(0,I_d)\) and \(\xData_i \sim \text{ i.i.d. } \mc{N}(0,I_d)\), for each \(i\in[n]\), while the ground truth labels \(\yData_i \) are given by \(\yData_i  = \textsf{sign}(\lara{\xData_i,\theta^\star}+z_i)\) for each \(i\in[n]\), where \(z_i \sim \text{ i.i.d. } \mc{N}(0,0.1\cdot I_d)\). We use \(n\in \{500,1000\}\) with \(d=10\). The first five of the $d=10$ features are chosen to be strategic. In all experiments, we take \(\kappa=0.5\) and \(\delta=0.4\). 
Each strategic agent  \(i\in[n]\) has a utility function given by:
\begin{align} \label{Eqn: Utility Function}
    u_i(x;\theta,\xData_i,\yData_i,\zeta_i) = \frac{1-\yData_i}{2}\lara{x,\theta}-\frac{1}{2\zeta_i}\Vert x-\xData_i\Vert^2,
\end{align}
where \(\zeta_i\) denote the perturbation ``power" of agent \(i\). 
For simplicity, we assume all agents are homogeneous, in the sense that \(\zeta_i=\zeta > 0\) for all \(i\in[n]\); in practice, one need not impose this assumption. Given this utility function, the best response of agents takes the form:
\begin{align} \label{Eqn: Best Response}
    b_i(\theta,\xData_i,\yData_i;\zeta) = \begin{cases}
    \xData_i & \text{if} \ \yData_i=+1,\\
    \xData_i+\zeta\theta & \text{if} \ \yData_i = -1
    \end{cases}
\end{align}
where, in our simulations, we fix \(\zeta = 0.05\). We reemphasize that our algorithm does not use the value of \(\zeta\) in any of its computations.
For purposes of illustration, we focus on the performance of the following algorithms: 
\begin{enumerate}[label=(\text{A}-\Roman*),align=left,leftmargin=*,widest=iiii,itemsep=-2mm]
    \item \label{OGDARR} Zeroth-order optimistic-GDA \emph{with} random reshuffling (see Algorithm \ref{Alg: OGDA-RR}),
    \item \label{OGDAS} Zeroth-order optimistic-GDA \emph{without} random reshuffling (see Appendix C),
    \item \label{SGDARR} Zeroth-order stochastic-GDA \emph{with} random reshuffling (see Appendix C), 
    \item \label{SGDAS} Zeroth-order stochastic-GDA \emph{without} random reshuffling (see Appendix C).
\end{enumerate}

and we evaluate the proposed algorithms and model formulation on two criteria:
\begin{enumerate}[label=(\roman*),align=left,leftmargin=*]
    \item \textit{Suboptimality}: To measure suboptimality, we use the \emph{gap function} \(\Delta(\alpha,\theta,\gamma) = L(\alpha,\theta,\gamma^\star) - L(\alpha^\star,\theta^\star,\gamma) \) (Def. \ref{Eqn: Gap Function}) where \((\alpha^\star,\theta^\star,\gamma^\star)\) is a solution of the min-max reformulation \eqref{eq:reformpp-new} of the WDRSC problem. If the objective $L(\cdot)$ is convex-concave, $\Delta(\cdot)$ is non-negative, and equals zero at (and only at) saddle points. 
    
    \item \textit{Accuracy}: Given a    data set \(\{(\xData_i,\yData_i)\}_{i\in[n]}\), the accuracy of a classifier \(\theta\) is measured as \( \frac{1}{n}\sum_{i\in[n]}\yData_i\lara{ b_i(\theta,\xData_i,\yData_i;\zeta),\theta }\). Under this criterion we compare the accuracy under different perturbations  for different classifiers $\theta$;
\end{enumerate}
To compute suboptimality, we first compute a true min-max saddle point \((\alpha^ \star,\theta^\star,\gamma^\star)\) via a first order gradient based algorithm (namely, GDA). 
All experiments were run using Python 3.7 on a standard MacBook Pro laptop (2.6 GHz Intel Core i7 and 16 GB of RAM).

\subsection{Results}
Simulation results presented in Figure \eqref{fig:sub1}-\eqref{fig:sub2} show that our proposed algorithm (i.e. \ref{OGDARR}) outperforms algorithms without reshuffling (i.e. \ref{OGDAS} and \ref{SGDAS}). However, its performance resembles that of zeroth-order stochastic-GDA with random reshuffling. More experimental studies need to be conducted to more  conclusively determine whether \ref{OGDARR} outperforms \ref{SGDARR}, or vice versa. In fact , there has been no theoretical investigations even for the \emph{first order} stochastic-GDA algorithm with random reshuffling; this is an interesting future direction to explore. 

In Figure \ref{fig: SyntheticData}, we also compare the robustness of the classifier obtained by using Algorithm \ref{OGDARR} with that obtained from prior work on solving probems of strategic classification trained with \(\zeta=0.05\) (referred as \emph{LogReg SC} in Figure \ref{fig: SyntheticData}). As expected, due to the formulation, the performance of the classifier obtained via \ref{OGDARR} degrades gracefully even when subject to large perturbations, while the performance of existing approaches to strategic classification degrades rapidly. 
Further numerical results on synthetically generated and real world datasets are given in Appendix C.

 
\begin{figure}
\centering
\begin{subfigure}{\linewidth}
  \centering
  \includegraphics[width=0.95\linewidth]{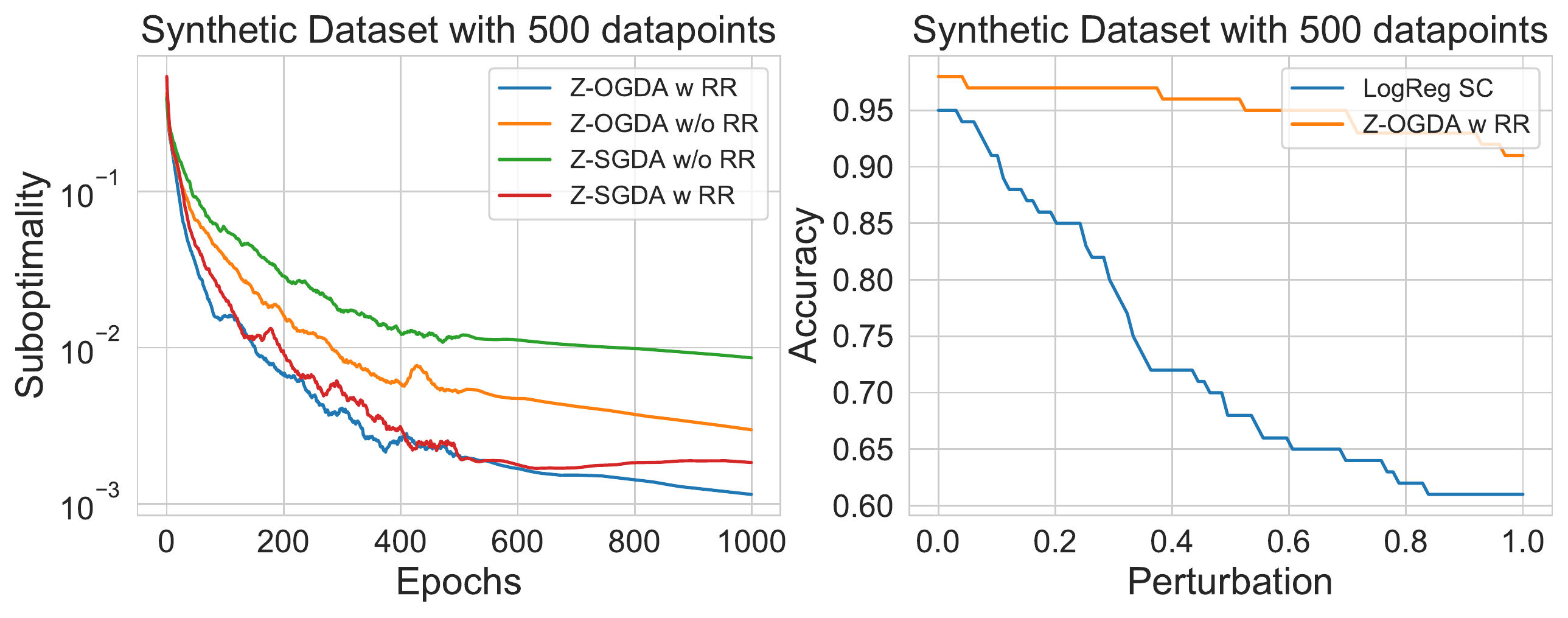}
  \caption{\(n=500\)}
  \label{fig:sub1}
\end{subfigure}\\
\begin{subfigure}{\linewidth}
  \centering
  \includegraphics[width=0.95\linewidth]{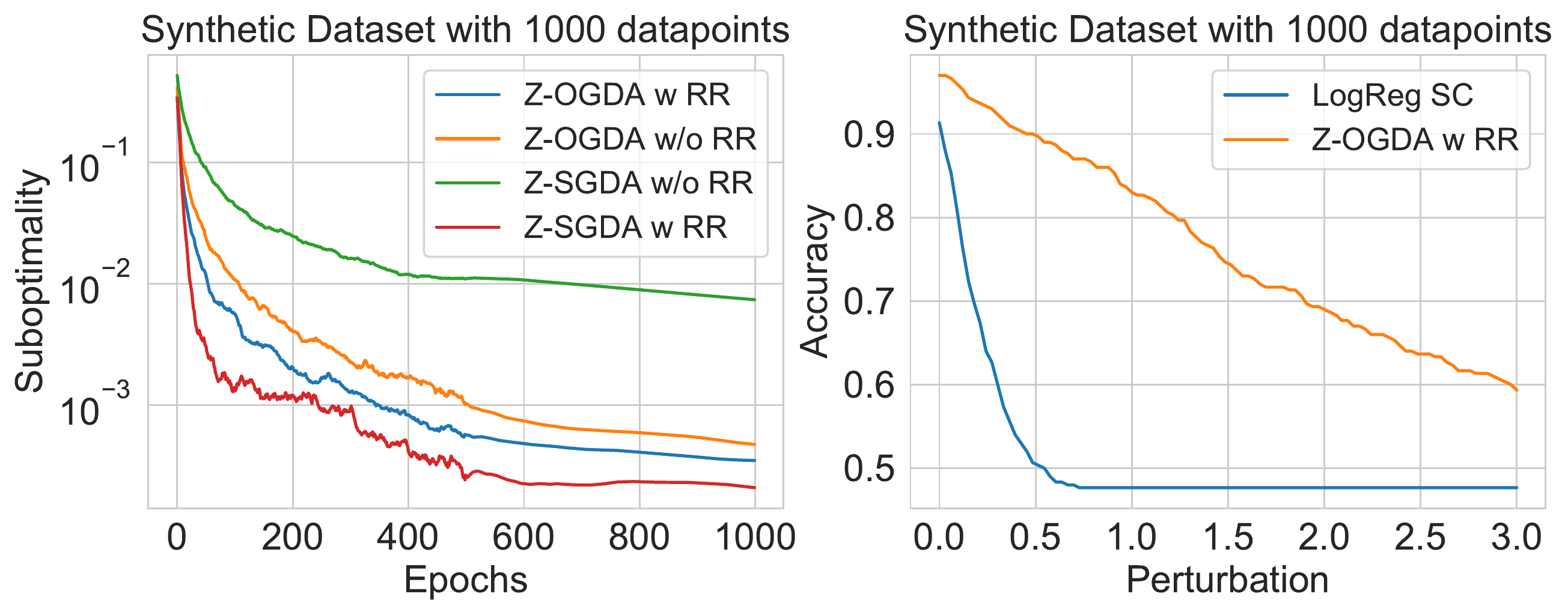}
  \caption{\(n=1000\)}
  \label{fig:sub2}
\end{subfigure}
\caption{Experimental results for a synthetic dataset with \(n=500\) and \(n=1000\). (Left panes of \eqref{fig:sub1}, \eqref{fig:sub2})) Suboptimality iterates generated by the four algorithms \ref{OGDARR}, \ref{OGDAS}, \ref{SGDARR}, \ref{SGDAS}, respectively denoted as \emph{Z-OGDA w RR}, \emph{Z-OGDA w/o RR}, \emph{Z-SGDA w RR}, \emph{Z-SGDA w/o RR}. (Right panes of \eqref{fig:sub1}, \eqref{fig:sub2})) Comparison between decay in accuracy of strategic classification with logistic regression (trained with \(\zeta=0.05\)) and Alg. \ref{OGDARR} with change in perturbation.}
\label{fig: SyntheticData}
\end{figure}


\section{Conclusion}
\label{sec: Conclusion}


This paper presents the first (to our knowledge) non-asymptotic convergence rates for a gradient-free stochastic min-max optimization algorithm with random reshuffling. Our theoretical results, established for smooth convex-concave min-max objectives, do not require any additional, restrictive structural assumptions to hold. As a concrete application, we reformulate a distributionally robust strategic classification problem as a convex-concave min-max optimization problem that can be iteratively solved using our method. Empirical results on synthetic and real datasets demonstrate the efficiency and effectiveness of our algorithm, as well as its robustness against adversarial distributional shifts and strategic behavior of the data sources. Immediate directions for future work include establishing convergence results for the random-reshuffling based Stochastic Gradient Descent Ascent (SGDA-RR) algorithm, as well as performing more extensive experimental studies to better understand the empirical performance of our algorithm.



\section*{Acknowledgements}
Research supported by NSF under grant DMS 2013985 ``THEORINet: Transferable, Hierarchical, Expressive, Optimal, Robust and Interpretable Networks''. Chinmay Maheshwari, Chih-Yuan Chiu and S. Shankar Sastry were also supported in part by U.S. Office of Naval Research MURI grant N00014-16-1- 2710. 

\bibliographystyle{alpha}
\bibliography{refs}

\newpage

\appendix
\section{Results for the Proof of Theorem 4.1}
\label{sec: Results for the Proof of Theorem 4.1}

\subsection{Lemmas for Theorem \ref{Thm: OGDA Convergence}}
\label{subsec: App, Lemmas for OGDA Theorem}

First, we list some fundamental facts regarding projections onto convex, compact subsets of an Euclidean space. Below, for any fixed convex, compact subset $\Omega \subset \R^d$, we denote the projection operator onto $\Omega$ by $\proj_\Omega(x) := \text{argmin}_{z \in \Omega} \Vert x - z \Vert_2$ for each $x \in \R^d$. Note that $\proj_\Omega(x)$ is well-defined (i.e., exists and is unique) for each $x \in \R^d$, if $\Omega \subset \R^d$ were convex and compact.

We begin by summarizing some fundamental properties of the projection operator $\proj_\Omega(\cdot)$.

\begin{proposition} \label{Prop: App, Projection, 1}
Let $\Omega \subset \R^d$ be compact and convex, and fix $x, y \in \R^d$ arbitrarily. Then:
\begin{align*}
    \big\Vert \proj_\Omega(x) - \proj_\Omega(y) \big\Vert_2^2 &\leq \big( \proj_\Omega(x) - \proj_\Omega(y) \big)^\top (x - y), \\
    \Vert \proj_\Omega(x) - \proj_\Omega(y) \Vert_2 &\leq \Vert x - y \Vert_2.
\end{align*}
\end{proposition}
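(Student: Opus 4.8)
The plan is to reduce both inequalities to the standard variational characterization of the Euclidean projection onto a convex set. First I would establish that for a convex, compact $\Omega \subset \R^d$ and any $x \in \R^d$, the point $p := \proj_\Omega(x)$ is characterized by the first-order optimality condition
\[
(x - p)^\top (z - p) \leq 0 \qquad \text{for all } z \in \Omega.
\]
This follows because $p$ minimizes the convex, differentiable function $z \mapsto \tfrac{1}{2}\Vert x - z\Vert_2^2$ over the convex set $\Omega$; the gradient of this function at $p$ is $p - x$, and the standard optimality condition for minimizing a differentiable convex function over a convex set gives $(p - x)^\top(z - p) \geq 0$ for all $z \in \Omega$, which rearranges to the displayed inequality.

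Next I would apply this characterization twice. Writing $p := \proj_\Omega(x)$ and $q := \proj_\Omega(y)$, I would instantiate the variational inequality for $x$ at the feasible point $z = q$, and the one for $y$ at the feasible point $z = p$, obtaining
\[
(x - p)^\top (q - p) \leq 0, \qquad (y - q)^\top (p - q) \leq 0.
\]
Setting $a := p - q = \proj_\Omega(x) - \proj_\Omega(y)$, these rearrange to $(x - p)^\top a \geq 0$ and $(y - q)^\top a \leq 0$. Subtracting the second from the first and simplifying the left-hand side to $\big[(x - y) - a\big]^\top a$ yields $\Vert a \Vert_2^2 \leq a^\top (x - y)$, which is exactly the first claimed inequality.

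Finally, the second inequality follows immediately from the first by Cauchy--Schwarz: from $\Vert a \Vert_2^2 \leq a^\top(x - y) \leq \Vert a \Vert_2 \, \Vert x - y \Vert_2$, dividing through by $\Vert a \Vert_2$ (the case $a = 0$ being trivial) gives $\Vert a \Vert_2 \leq \Vert x - y \Vert_2$.

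I do not anticipate a genuine obstacle here, as these are textbook facts; the only point requiring care is the bookkeeping of signs when combining the two variational inequalities, and the initial justification of the optimality condition, which rests on convexity of $\Omega$ guaranteeing that the directions $z - p$ for $z \in \Omega$ are exactly the admissible feasible directions at $p$. Compactness enters only to guarantee existence and uniqueness of the projection, which the hypotheses of the proposition already supply.
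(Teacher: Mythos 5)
Your proof is correct and follows essentially the same route as the paper's: both derive the two variational inequalities for the projection (the paper cites Nesterov's Lemma 3.1.4, you derive them from the first-order optimality condition), combine them to get the first inequality, and then apply Cauchy--Schwarz, treating the case $\proj_\Omega(x) = \proj_\Omega(y)$ separately, to get the second.
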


\begin{proof}
From \cite{Nesterov2014ConvexOptimizationLecturesBasicCourse}, Lemma 3.1.4 (see also \cite{RechtWright2021OptimizationforDataAnalysis}, Lemma 7.4), we have:
\begin{align*}
    &\big( \proj_\Omega(x) - \proj_\Omega(y) \big)^\top \big(x - \proj_\Omega(x) \big) \geq 0, \\
    &\big( \proj_\Omega(y) - \proj_\Omega(x) \big)^\top \big(y - \proj_\Omega(y) \big) \geq 0.
\end{align*}
Adding the two expressions and rearranging terms, we obtain:
\begin{align*}
    &\big( \proj_\Omega(x) - \proj_\Omega(y) \big)^\top \big( (x-y) - (\proj_\Omega(x) - \proj_\Omega(y)) \big) \geq 0, \\
    \Rightarrow \hspace{0.5mm} &\Vert \proj_\Omega(x) - \proj_\Omega(y) \Vert_2^2 \leq \big( \proj_\Omega(x) - \proj_\Omega(y) \big)^\top (x - y),
\end{align*}
as given in the first claim. The Cauchy Schwarz inequality then implies:
\begin{align*}
    \Vert \proj_\Omega(x) - \proj_\Omega(y) \Vert_2^2 &\leq \big( \proj_\Omega(x) - \proj_\Omega(y) \big)^\top (x - y) \\
    &\leq \Vert \proj_\Omega(x) - \proj_\Omega(y) \Vert_2 \cdot \Vert x - y \Vert_2.
\end{align*}
If $\proj_\Omega(x) = \proj_\Omega(y)$, then the second claim becomes $0 \leq \Vert x - y \Vert_2$, which is clearly true. Otherwise, dividing both sides above by $\Vert \proj_\Omega(x) - \proj_\Omega(y) \Vert_2$ gives the second claim.
\end{proof}




\begin{lemma} \label{Lemma: Proj, applied to OGDA Thm}
Let $\Omega \subset \R^d$ be a compact, convex subset of $\R^d$, and consider the update \(z_{k+1} = \proj_\Omega( z_{k} -\eta F(z_{k+1}) + \gamma_k)\), where  $z_k, z_{k+1}, \gamma_k \in \R^d$. Then, for each $z \in \Omega$: 
\begin{align*}
    &\lara{F(z_{k+1}), z_{k+1}-z} \\ \leq \hspace{0.5mm} &\frac{1}{2\eta}\|z_k-z\|^2-\frac{1}{2\eta}\|z_{k+1}-z\|^2 -\frac{1}{2\eta}\|z_{k+1}-z_k\|^2 + \frac{1}{\eta} \lara{\gamma_k,z_{k+1}-z }.
\end{align*}    
\end{lemma}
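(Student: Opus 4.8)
The plan is to derive the inequality directly from the characterization of the projection as the solution to a variational inequality. The key fact I would use is the first-order optimality condition for the projection operator onto a convex set $\Omega$: for any $w \in \R^d$, the point $\proj_\Omega(w)$ satisfies
\begin{align*}
    \lara{w - \proj_\Omega(w), z - \proj_\Omega(w)} \leq 0, \qquad \forall \, z \in \Omega.
\end{align*}
This is essentially the content of Proposition \ref{Prop: App, Projection, 1} in disguise, and it is the standard obtuse-angle characterization of Euclidean projection. Applying this with $w = z_k - \eta F(z_{k+1}) + \gamma_k$, so that $\proj_\Omega(w) = z_{k+1}$, gives for every $z \in \Omega$:
\begin{align*}
    \lara{z_k - \eta F(z_{k+1}) + \gamma_k - z_{k+1}, \, z - z_{k+1}} \leq 0.
\end{align*}

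Next I would rearrange this to isolate the inner product $\lara{F(z_{k+1}), z_{k+1} - z}$. Moving the $F$ term to one side yields
\begin{align*}
    \eta \lara{F(z_{k+1}), z_{k+1} - z} \leq \lara{z_k - z_{k+1} + \gamma_k, \, z_{k+1} - z},
\end{align*}
where I have flipped the sign on the $z - z_{k+1}$ factor and on the $F$ term consistently. The term involving $\gamma_k$ already gives the last summand $\frac{1}{\eta}\lara{\gamma_k, z_{k+1} - z}$ after dividing through by $\eta$, so the remaining work is to handle $\lara{z_k - z_{k+1}, z_{k+1} - z}$.

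For that term I would invoke the standard three-point (polarization) identity
\begin{align*}
    \lara{z_k - z_{k+1}, \, z_{k+1} - z} = \frac{1}{2}\|z_k - z\|^2 - \frac{1}{2}\|z_{k+1} - z\|^2 - \frac{1}{2}\|z_{k+1} - z_k\|^2,
\end{align*}
which one checks by expanding all squared norms. Substituting this into the previous display and dividing both sides by $\eta$ produces exactly the claimed bound. I do not anticipate a genuine obstacle here; the only point requiring minor care is the bookkeeping of signs when converting the variational inequality into a statement about $z_{k+1} - z$ rather than $z - z_{k+1}$, and then correctly matching the three-point identity so that the $-\frac{1}{2\eta}\|z_{k+1} - z_k\|^2$ term appears with the right sign. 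The result is the implicit (proximal-style) analogue of the usual projected-gradient descent lemma, adapted to the fact that $F$ is evaluated at the \emph{new} iterate $z_{k+1}$ with an extra perturbation $\gamma_k$.
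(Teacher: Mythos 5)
Your proposal is correct and is essentially the paper's own proof with the steps reordered: the paper first expands $\|z_{k+1}-z\|^2$ (which is your three-point identity in disguise) and then inserts the projection optimality condition, whereas you start from the variational inequality and then apply the identity. The sign bookkeeping and the final bound match the paper exactly.
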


\begin{proof}
Note that:
\begin{align*}
    \|z_{k+1}-z\|^2 &= \|z_{k+1}-z_k+z_k-z\|^2 \\ 
    &= \|z_{k+1}-z_k\|^2 + \|z_k-z\|^2 + 2 \lara{ z_{k+1}-z_k,z_k-z} \\ 
    &= \|z_{k+1}-z_k\|^2 + \|z_k-z\|^2 +  2 \lara{ z_{k+1}-z_k,z_k-z_{k+1}+z_{k+1} -z}  \\ 
    &= \|z_k-z\|^2-\|z_{k+1}-z_k\|^2 +  2 \lara{ z_{k+1}-z_k,z_{k+1} -z} 
\end{align*}
By definition of $z_{k+1}$, and optimality conditions for the projection operator:
\begin{align*}
    &\lara{z_{k+1}-z,z_{k+1}-z_k+\eta F(z_{k+1}) - \gamma_k}\leq 0, \\
    \Ra \hspace{0.5mm} &\lara{ z_{k+1}-z_k,z_{k+1} -z} \leq \lara{\gamma_k, z_{k+1}-z} - \eta \cdot \lara{F(z_{k+1}), z_{k+1}-z}.
\end{align*}
Substituting back, we obtain:
\begin{align*}
    \|z_{k+1}-z\|^2 &= \|z_k-z\|^2-\|z_{k+1}-z_k\|^2 +  2 \lara{ z_{k+1}-z_k,z_{k+1} -z} \\
    &\leq \|z_k-z\|^2 - \|z_{k+1}-z_k\|^2 + 2 \lara{\gamma_k, z_{k+1}-z} - 2 \eta \cdot \lara{F(z_{k+1}), z_{k+1}-z}.
\end{align*}
Rearranging and dividing by $\eta$ gives the claim in the lemma.
\end{proof}

Next, we state the properties of the mean and variance of the zeroth-order gradient estimator defined in Section \ref{subsec: Zeroth-Order Gradient Estimates} (\cite{Bravo2018BanditLearning}, Lemma C.1). Below, we define the $\queryRadius$-smoothed loss function $L^\queryRadius: \R^d \ra \R$ by $L^\queryRadius(u) := \E_{\overline{v} \sim \Unif(B^d)}[L(u + \queryRadius \overline{v})]$, where $\sphere^{d-1}$ denotes the ($d-1$)-dimensional unit sphere in $\R^d$, $B^d$ denotes the $d$-dimensional unit open ball in $\R^d$, and $\Unif(\cdot)$ denotes the continuous uniform distribution over a set. Similarly, we define $L_i^\queryRadius: \R^d \ra \R$ by $L_i^\queryRadius(u) := \E_{\overline{v} \sim \Unif(B^d)}[L_i(u + \queryRadius \overline{v})]$, for each $i \in [n] := \{1, \cdots, n\}$. We further define $\queryRadius \cdot \sphere^{d-1} := \{\queryRadius v: v \in \sphere^{d-1} \}$ and $\queryRadius \cdot B^d := \{\queryRadius \overline{v}: \overline{v} \in B^d \}$. Finally, we use $\vol_d(\cdot)$ to denote the volume of a set in $d$ dimensions.

\begin{proposition} \label{Prop: App, ZO grad mean, variance}
Let $\hat{F}(u; \queryRadius, v) = \frac{d}{\queryRadius} \cdot L(u + \queryRadius v) v$ and \(F(u)=\nabla L(u)\). Then the following holds: 
\begin{align} 
\label{Eqn: Prop, ZO grad mean, equality}
    \E_{v \sim \Unif(\sphere^{d-1})} \big[\hat{F}(u; \queryRadius, v) \big] &= \nabla L^\queryRadius(u), \\ 
    \label{Eqn: Prop, ZO grad, bound from smoothed version}
    \Vert \nabla L^\queryRadius(u) - F(u) \Vert_2 &\leq \ell \queryRadius, \\ 
    \label{Eqn: Prop, ZO grad, bound}
    \Vert \hat{F}(u; \queryRadius, v) \Vert_2 &\leq dG + \frac{dM_L}{\queryRadius}, \\
    \label{Eqn: Prop, ZO and true grad, difference}
    \Vert \hat{F}(u; \queryRadius, v) - F(u) \Vert &\leq \min\Bigg\{ (d+1) G + \frac{dM_L}{\queryRadius}, \ell \queryRadius + 2dG + \frac{2dM_L}{\queryRadius} \Bigg\}.
\end{align}
\end{proposition}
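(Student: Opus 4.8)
The plan is to prove the four claims in the stated order, since the last three all lean on the mean identity in the first. The bulk of the work—and the step I expect to be the main obstacle—is the first claim, $\E_{v \sim \Unif(\sphere^{d-1})}[\hat{F}(u;\queryRadius,v)] = \nabla L^\queryRadius(u)$, which is a Stokes'-theorem identity linking a surface integral over the sphere to the gradient of a volume integral over the ball. Everything after it is triangle-inequality bookkeeping combined with the Lipschitz and smoothness bounds and the definition of $M_L$.

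For the first claim, I would begin by writing the smoothed loss as a volume average over the shifted ball, $L^\queryRadius(u) = \frac{1}{\vol_d(\queryRadius B^d)} \int_{u + \queryRadius B^d} L(z)\, dz$. Differentiating under the integral sign (valid since $L$ is smooth and the domain is bounded) and invoking the component form of the divergence theorem, the gradient of the integral of $L$ over the shifted ball equals the surface integral of $L$ against the outward unit normal over the boundary sphere $\queryRadius \sphere^{d-1}$. On that sphere the outward normal at $u + \queryRadius v$ is exactly $v$, and the surface measure scales as $\queryRadius^{d-1} d\sigma(v)$. I would then collapse the normalization constants using $\vol_d(\queryRadius B^d) = \queryRadius^d \vol_d(B^d)$ together with the identity $\vol_{d-1}(\sphere^{d-1}) = d\,\vol_d(B^d)$ relating the sphere's surface area to the ball's volume; this reduces the prefactor to $\frac{d}{\queryRadius}$ times the uniform average over the sphere, which is precisely $\E_v[\hat{F}(u;\queryRadius,v)]$. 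Justifying the differentiation under the integral and tracking these volume-to-surface constants is the delicate part.

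Given the first claim, the second follows from smoothness: writing $\nabla L^\queryRadius(u) = \E_{\overline{v} \sim \Unif(B^d)}[\nabla L(u + \queryRadius \overline{v})]$ (again differentiating under the expectation) and subtracting $F(u) = \nabla L(u)$, Jensen's inequality and $\ell$-smoothness give $\|\nabla L^\queryRadius(u) - F(u)\| \leq \E_{\overline{v}}[\ell \queryRadius \|\overline{v}\|] \leq \ell \queryRadius$, since $\|\overline{v}\| \leq 1$ on the ball. The third claim is a direct magnitude bound: because $\|v\| = 1$, we have $\|\hat{F}(u;\queryRadius,v)\| = \frac{d}{\queryRadius}|L(u + \queryRadius v)|$, and bounding $|L(u + \queryRadius v)| \leq |L(u)| + G\queryRadius \leq M_L + G\queryRadius$ via $G$-Lipschitzness and the definition of $M_L$ yields $dG + \frac{dM_L}{\queryRadius}$.

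Finally, the fourth claim is the minimum of two bounds, each obtained from the triangle inequality. For the first term, $\|\hat{F}(u;\queryRadius,v) - F(u)\| \leq \|\hat{F}(u;\queryRadius,v)\| + \|F(u)\|$, where the third claim bounds the first summand and $\|F(u)\| = \|\nabla L(u)\| \leq G$ follows from Lipschitzness, giving $(d+1)G + \frac{dM_L}{\queryRadius}$. For the second term, I would route through the smoothed gradient: $\|\hat{F}(u;\queryRadius,v) - F(u)\| \leq \|\hat{F}(u;\queryRadius,v) - \nabla L^\queryRadius(u)\| + \|\nabla L^\queryRadius(u) - F(u)\|$, bounding the second summand by $\ell \queryRadius$ via the second claim and the first by $\|\hat{F}(u;\queryRadius,v)\| + \E_v[\|\hat{F}(u;\queryRadius,v)\|] \leq 2\big(dG + \frac{dM_L}{\queryRadius}\big)$ using the third claim and Jensen. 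This gives $\ell\queryRadius + 2dG + \frac{2dM_L}{\queryRadius}$, and taking the smaller of the two bounds completes the proof.
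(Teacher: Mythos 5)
Your proposal is correct and follows essentially the same route as the paper's proof: the mean identity via differentiating under the integral and converting to a surface integral by the divergence/Stokes theorem with the surface-area-to-volume ratio $d/\queryRadius$, the bias bound via $\ell$-smoothness under the expectation, the norm bound via $|L(u+\queryRadius v)|\leq M_L + G\queryRadius$, and the final bound as the minimum of a direct triangle inequality and a decomposition through $\nabla L^\queryRadius(u)$. No substantive differences.
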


\begin{proof}
First, to establish \eqref{Eqn: Prop, ZO grad mean, equality}, observe that since $L^\queryRadius(u) = \E_{v \sim \Unif(B^d)}[L(u + \queryRadius v)]$ and $\hat{F}(u; \queryRadius, v) = \frac{d}{\queryRadius} \cdot L(u + \queryRadius v) v$ for each $u \in \R^d$, $\queryRadius > 0$, and $v \in \sphere^{d-1}$:
\begin{align} \nonumber
    \nabla L^\queryRadius (u) &= \nabla \E_{\overline{v} \sim \Unif(B^d)} \big[ L(u + \queryRadius \overline{v}) \big] \\ \nonumber
    &= \nabla \E_{\overline{v} \sim \Unif(\queryRadius \cdot B^d)} \big[ L(u + \overline{v}) \big] \\ \nonumber
    &= \frac{1}{\vol_d(\queryRadius \cdot B^d)} \cdot \nabla \left( \int_{\queryRadius \cdot B^d} L(u + \overline{v}) \hspace{0.5mm} d\overline{v} \right) \\ \label{Eqn: Stokes' Theorem, Application} 
    &= \frac{1}{\vol_d(\queryRadius \cdot B^d)} \cdot \int_{\queryRadius \cdot \sphere^{d-1}} L(u + v) \cdot \frac{v}{\Vert v \Vert_2} \hspace{0.5mm} dv, \\ \nonumber
    \E_{v \sim \Unif(\sphere^{d-1})} \big[ \hat{F}(u; \queryRadius, v) \big] &= \frac{d}{\queryRadius} \cdot \E_{v \sim \Unif(\sphere^{d-1})} \big[ L(u + \queryRadius v) v \big] \\ \nonumber
    &= \frac{d}{\queryRadius} \cdot \E_{v \sim \Unif(\queryRadius \cdot \sphere^{d-1})} \Bigg[ L(u + v) \cdot \frac{v}{\Vert v \Vert_2} \Bigg] \\ \nonumber
    &= \frac{d}{\queryRadius} \cdot \frac{1}{\vol_{d-1}(\queryRadius \cdot \sphere^{d-1})} \cdot \int_{\queryRadius \cdot \sphere^{d-1}} L(u + v) \cdot \frac{v}{\Vert v \Vert_2} \hspace{0.5mm} dv,
\end{align}
where \eqref{Eqn: Stokes' Theorem, Application} follows because Stokes' Theorem (see, e.g., Lee, Theorem 16.11 \cite{Lee2006IntroductionToSmoothManifolds}) implies that:
\begin{align*}
    \nabla \int_{\queryRadius \cdot B^d} L(u + \overline{v}) \hspace{0.5mm} d\overline{v} = \int_{\queryRadius \cdot \sphere^{d-1}} L(u + v) \cdot \frac{v}{\Vert v \Vert_2} \hspace{0.5mm} dv.
\end{align*}
The equality \eqref{Eqn: Prop, ZO grad mean, equality} now follows by observing that the surface-area-to-volume ratio of $\queryRadius \cdot B^d$ is $d/\queryRadius$. 

Next, to establish \eqref{Eqn: Prop, ZO grad, bound from smoothed version}, we note that:
\begin{align} \nonumber
    \Vert \nabla L^\queryRadius(u) - F(u) \Vert_2 &= \big\Vert \nabla \E_{\overline{v} \sim \Unif(B^d)} \big[ L^\queryRadius(u) - L(u) \big] \big\Vert_2 \\ \nonumber
    &= \frac{1}{\vol_d(B^d)} \cdot \Bigg\Vert \nabla \Bigg( \int_{B^d} \big[ L(u + \queryRadius \overline{v}) - L(u) \big] \hspace{0.5mm} d\overline{v} \Bigg) \Bigg\Vert_2 \\ \label{Eqn: Diff Under Int Sign}
    &\leq \frac{1}{\vol_d(B^d)} \cdot \Bigg\Vert \int_{B^d} \big[ F(u + \queryRadius \overline{v}) - F(u) \big] \hspace{0.5mm} d\overline{v} \Bigg\Vert_2 \\ \nonumber
    &\leq \frac{1}{\vol_d(B^d)} \cdot \int_{B^d} \big\Vert F(u + \queryRadius \overline{v}) - F(u) \big\Vert_2 \hspace{0.5mm} d\overline{v} \\ \nonumber
    &\leq \frac{1}{\vol_d(B^d)} \cdot \int_{B^d} \ell \queryRadius \cdot \Vert \overline{v} \Vert_2 \hspace{0.5mm} d\overline{v} \\ \nonumber
    &\leq \ell \queryRadius,
\end{align}
where \eqref{Eqn: Diff Under Int Sign} follows by differentiating under the integral sign (see, e.g., Rudin, Theorem 9.42 \cite{Rudin1976PrinciplesOfMathematicalAnalysis}), and the remaining inequalities follow from the fact that $F$ is $\ell$-Lipschitz.

Next, we establish \eqref{Eqn: Prop, ZO grad, bound} by using the triangle inequality and the $M_L$-boundedness of $L(\cdot)$ on $\X \times \Y$, and the $G$-Lipschitzness of $L(\cdot)$:
\begin{align*}
    |\hat{F}(u; \queryRadius, v)| &= \frac{d}{\queryRadius} |L(u + \queryRadius v)| \cdot \Vert v \Vert_2 \\
    &\leq \frac{d}{\queryRadius} \cdot \big( |L(u)| + |L(u + \queryRadius v) - L(u)| \big) \cdot 1 \\
    &\leq \frac{d}{\queryRadius} \cdot (M_L + \queryRadius G).
\end{align*}
We can then use \eqref{Eqn: Prop, ZO grad, bound} to establish \eqref{Eqn: Prop, ZO and true grad, difference} by observing that:
\begin{align*}
    |\hat{F}(u; \queryRadius, v) - F(u)| &\leq |\hat{F}(u; \queryRadius, v)| + |F(u)| \leq (d+1) G + \frac{dM_L}{\queryRadius}.
\end{align*}
and, from \eqref{Eqn: Prop, ZO grad, bound}:
\begin{align*}
    &|\hat{F}(u; \queryRadius, v) - F(u)| \\
    \leq \hspace{0.5mm} &\big|\hat{F}(u; \queryRadius, v) - \E_v[\hat{F}(u; \queryRadius, v)| u] \big| + \big|\E_v[\hat{F}(u; \queryRadius, v)| u] - F(u) \big| \\
    \leq \hspace{0.5mm} &\big|\hat{F}(u; \queryRadius, v) - \E_v[\hat{F}(u; \queryRadius, v)| u] \big| + \big|\nabla L^\queryRadius(u) - F(u) \big| \\
    \leq \hspace{0.5mm} &2 \left( dG + \frac{dM_L}{\queryRadius}  \right) + \ell \queryRadius
\end{align*}
This concludes the proof.
\end{proof}

Below, we present technical lemmas that allow us to analyze the convergence rate of the correlated iterates $\{u_i^t\}$ in our random reshuffling-based OGDA Algorithm (Alg. \ref{Alg: OGDA-RR}). 

Let $\sigma^0, \cdots, \sigma^{t-1}$ denote the permutations drawn from epoch 0 to epoch $t-1$, and let $\{u_i^t(\sigma^t)\}_{1 \leq i \leq n}$ and $\{u_i^t(\tilde\sigma^t)\}_{1 \leq i \leq n}$ denote the iterates obtained at epoch $t$, when the permutations $\sigma^t$ and $\tilde\sigma^t$ are used for the epoch \(t\), respectively. Moreover, let $\D_{i,t}$ denote the distribution of $\{u_i^t(\sigma^t)\}_{1 \leq i \leq n}$ under $\sigma^t$, and for \(1\leq r\leq n\) let $\D_{i,t}^{(r)}$ denote the distribution of $\{u_i^t(\sigma^t)\}_{1 \leq i \leq n}$ with $\sigma^t$ conditioned on the event $\{\sigma_{i-1}^t = r\}$. 

We use the \textit{$p$-Wasserstein distance between probability distributions on $\R^d$}, defined below, to characterize the distance between $\D_{i,t}$ and $\D_{i,t}^{(r)}$. This is used in the coupling-based techniques employed to establish non-asymptotic convergence results for our random reshuffling algorithm. 
{\color{black}Note the difference between the $p$-Wasserstein distance for probability distributions on $\R^d$, and the \textit{Wasserstein distance on $\mathcal{Z} := \R^d \times \{+1, -1\}$ associated with a metric $c: \mathcal{Z} \times \mathcal{Z} \ra [0, \infty)$}, defined in Appendix \ref{app: appendixProofofConvexConcavity} (Definition \ref{Def: Wasserstein Metric, with Cost}).}

\begin{definition}[\textbf{$p$-Wasserstein distance between distributions on $\R^d$}] \label{Def: p-Wasserstein Metric}
Let $\mu, \nu$ be probability distributions over $\R^d$ with finite $p$-th moments, for some $p \geq 1$, and let $\Pi(\mu, \nu)$ denote the set of all couplings (joint distributions) between $\mu$ and $\nu$. The $p$-\textit{Wasserstein distance between $\mu$ and $\nu$}, denoted $\W_p(\mu, \nu)$, is defined by:
\begin{align*}
    \W_p(\mu, \nu) = \inf_{(X, X') \sim \pi \in \Pi(\mu, \nu)} \Big( \E_\pi \big[ \Vert X - X' \Vert^p \big] \Big)^{1/p}.
\end{align*}
\end{definition}

The following proposition characterizes the 1-Wasserstein distance as a measure of the gap between Lipschitz functions of random variables.

\begin{proposition}[\textbf{Kantorovich Duality}] \label{Prop: Kantorovich Duality}
If $\mu, \nu$ are probability distributions over $\R^d$ with finite second moments, then:
\begin{align*}
    \W_1(\mu, \nu) = \sup_{g \in \emph{Lip(1)}} \E_{X \sim \mu}[g(X)] - \E_{Y \sim \nu}[g(Y)],
\end{align*}
where $\emph{Lip(1)} := \{g: \R^d \ra \R: g \text{ is 1-Lipschitz} \}$.
\end{proposition}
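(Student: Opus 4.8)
The statement is the classical Kantorovich--Rubinstein theorem, and the plan is to prove the two inequalities separately. Throughout, write $c(x,y) := \|x - y\|$, so that by Definition \ref{Def: p-Wasserstein Metric} we have $\W_1(\mu,\nu) = \inf_{\pi \in \Pi(\mu,\nu)} \int c(x,y)\, d\pi(x,y)$. The finite-second-moment hypothesis on $\mu,\nu$ is used at the outset to guarantee that $c(x,y) = \|x-y\| \leq \|x\| + \|y\|$ is integrable against every coupling $\pi \in \Pi(\mu,\nu)$, so that all the integrals below are well-defined.

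The easy direction ($\sup \leq \W_1$) follows directly from the coupling formulation. Fix any $g \in \mathrm{Lip}(1)$ and any coupling $\pi \in \Pi(\mu,\nu)$. Since $g$ is $1$-Lipschitz, $g(x) - g(y) \leq \|x-y\|$ for all $x,y$, so that
\[
\E_{X \sim \mu}[g(X)] - \E_{Y \sim \nu}[g(Y)] = \int \big( g(x) - g(y) \big)\, d\pi(x,y) \leq \int \|x-y\|\, d\pi(x,y).
\]
Taking the infimum over $\pi \in \Pi(\mu,\nu)$ on the right and then the supremum over $g \in \mathrm{Lip}(1)$ on the left yields $\sup_{g \in \mathrm{Lip}(1)} \big( \E_\mu[g] - \E_\nu[g] \big) \leq \W_1(\mu,\nu)$.

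For the reverse direction ($\W_1 \leq \sup$) I would invoke the general Kantorovich duality theorem, which asserts that for a lower semicontinuous cost $c$ one has
\[
\inf_{\pi \in \Pi(\mu,\nu)} \int c\, d\pi = \sup_{\phi,\psi} \Big\{ \int \phi\, d\mu + \int \psi\, d\nu \ : \ \phi(x) + \psi(y) \leq c(x,y) \Big\},
\]
and then reduce the dual feasible pairs to the form $(\phi, -\phi)$ with $\phi \in \mathrm{Lip}(1)$. The key device is the $c$-transform $\phi^c(y) := \inf_x \big( c(x,y) - \phi(x) \big)$: replacing $\psi$ by $\phi^c$ preserves feasibility while only increasing the dual objective. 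Because $c$ is a genuine metric, one checks that every function of the form $\phi^c$ is automatically $1$-Lipschitz, and that for a $1$-Lipschitz $\phi$ one has $\phi^c = -\phi$. Hence the dual supremum is attained along pairs $(\phi, -\phi)$ with $\phi \in \mathrm{Lip}(1)$, which gives $\W_1(\mu,\nu) \leq \sup_{\phi \in \mathrm{Lip}(1)} \big( \int \phi\, d\mu - \int \phi\, d\nu \big)$ and, combined with the easy direction, the claimed identity.

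The main obstacle is the hard direction, and specifically the justification of the general Kantorovich duality itself: this is a minimax / Fenchel--Rockafellar argument whose technical core is a Hahn--Banach-type separation (or an application of a minimax theorem) permitting the interchange of the infimum over couplings with the supremum over potentials. The second-moment assumption ensures the potentials are integrable and the duality gap genuinely vanishes, so that the equality is not merely formal. By contrast, the Lipschitz-reduction step via the $c$-transform identities for a metric cost is elementary once those identities are established. Since this is a standard result in optimal transport, I would cite it (e.g. Villani) and present only the easy direction together with the $c$-transform reduction in full.
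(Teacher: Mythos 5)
Your proposal is correct: the easy direction is proved in full, and the hard direction is the standard Kantorovich--Rubinstein argument (general duality plus the $c$-transform reduction to $1$-Lipschitz potentials), appropriately delegated to a reference such as Villani. The paper itself states Proposition \ref{Prop: Kantorovich Duality} without any proof, treating it as a classical fact, so your sketch is entirely consistent with (indeed more detailed than) the paper's treatment. One small remark: finite first moments already suffice for $\W_1$; the second-moment hypothesis in the statement is only there because the surrounding analysis also uses $\W_2$.
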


Using \cite[Lemma C.2]{YuMazumdar2021FastDistributionallyRobustLearning}, we now bound the difference between the unbiased gap $\E[\Delta(u_i^t)]$ and the biased gap $\E[L_{\sigma_i^t}(\x_{i+1}^t, y^\star) - L_{\sigma_i^t}(\x^\star, y_{i+1}^t)]$ using the Wasserstein metric.

\begin{lemma}\label{Lemma: App, Gap Difference}
Let $u^\star := (\x^\star, \y^\star) \in \R^{d_x} \times \R^{d_y} = \R^d$ denote a saddle point of the min-max optimization problem \eqref{Eqn:Min-maxProblem2}. Then, for each $t \in [T]$ and $i \in [n]$, the iterates $\{u_i^t\} = \{(\x_i^t, \y_i^t)\}$ of the OGDA-RR algorithm satisfy:
\begin{align*}
    \Big| \E[\Delta(u_{i+1}^t)] - \E\big[ L_{\sigma_i^t}(\x_{i+1}^t, y^\star) - L_{\sigma_i^t}(\x^\star, y_{i+1}^t) \big] \Big| \leq \frac{G}{n} \sum_{r=1}^n \W_2\big( \D_{i+1,t}, \D_{i+1,t}^r \big)
\end{align*}
\end{lemma}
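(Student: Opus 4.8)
The plan is to exploit the finite-sum structure $L=\frac1n\sum_{r=1}^n L_r$ together with the law of total expectation over the random index $\sigma_i^t$, so that both terms on the left-hand side become $\frac1n$-weighted sums of the expectation of a \emph{single fixed function} evaluated under two different laws of the iterate $u_{i+1}^t$; the discrepancy between those laws is exactly what a Wasserstein distance controls. Concretely, for each component index $r\in[n]$ I would introduce the per-sample gap integrand $g_r:\R^d\to\R$ defined by $g_r(x,y):=L_r(x,y^\star)-L_r(x^\star,y)$. This $g_r$ is a fixed function that does \emph{not} depend on the permutation, and it inherits Lipschitzness from $L_r$ block-wise (each difference moves only one of the two variables), so that under Assumption \ref{Assumption:MainTheorem} it is $G$-Lipschitz. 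With this notation $\Delta(u)=\frac1n\sum_{r} g_r(u)$ and $L_{\sigma_i^t}(x_{i+1}^t,y^\star)-L_{\sigma_i^t}(x^\star,y_{i+1}^t)=g_{\sigma_i^t}(u_{i+1}^t)$.

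Next I would rewrite each of the two expectations. Since the averaging index $r$ in $\Delta$ is decoupled from the randomness of the iterate, whose unconditional law is $\D_{i+1,t}$,
\begin{align*}
    \E[\Delta(u_{i+1}^t)] = \frac1n \sum_{r=1}^n \E_{U \sim \D_{i+1,t}}[g_r(U)].
\end{align*}
For the biased term I would condition on $\sigma_i^t$: because $\sigma^t$ is a uniformly random permutation, $\Prob(\sigma_i^t=r)=1/n$ for every $r$, and by definition the law of $u_{i+1}^t$ conditioned on $\{\sigma_i^t=r\}$ is $\D_{i+1,t}^{(r)}$, so
\begin{align*}
    \E\big[g_{\sigma_i^t}(u_{i+1}^t)\big] = \sum_{r=1}^n \Prob(\sigma_i^t=r)\,\E\big[g_r(u_{i+1}^t)\mid \sigma_i^t=r\big] = \frac1n \sum_{r=1}^n \E_{U\sim \D_{i+1,t}^{(r)}}[g_r(U)].
\end{align*}
Subtracting and applying the triangle inequality reduces the claim to bounding $\frac1n\sum_r \big|\E_{\D_{i+1,t}}[g_r]-\E_{\D_{i+1,t}^{(r)}}[g_r]\big|$.

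The crux is that each summand is a difference of expectations of the \emph{same} $G$-Lipschitz function $g_r$ under the two measures $\D_{i+1,t}$ and $\D_{i+1,t}^{(r)}$. I would apply Kantorovich duality (Proposition \ref{Prop: Kantorovich Duality}) to $g_r/G\in\mathrm{Lip}(1)$, and to its negative, to obtain $\big|\E_{\D_{i+1,t}}[g_r]-\E_{\D_{i+1,t}^{(r)}}[g_r]\big|\le G\,\W_1(\D_{i+1,t},\D_{i+1,t}^{(r)})$, and then use the elementary monotonicity $\W_1\le\W_2$ (Jensen applied to the coupling in Definition \ref{Def: p-Wasserstein Metric}) to upgrade to $\W_2$. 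Summing over $r$ yields the stated bound. I expect the main obstacle to be conceptual rather than computational: recognizing that the identical fixed function $g_r$ governs both the unconditional and the conditional expectation, so that their difference is precisely a Lipschitz-test-function discrepancy between two laws—exactly the quantity Kantorovich duality estimates—while the correlation between the chosen loss index $\sigma_i^t$ and the iterate is what produces the bias in the first place. The one technical point to track carefully is the alignment of the two $1/n$ factors, one arising from the finite-sum average defining $L$ and the other from the uniform marginal of $\sigma_i^t$; their matching is what collapses the bias into a clean average of Wasserstein distances rather than a more awkward weighted sum, and a secondary bookkeeping item is simply carrying the Lipschitz constant $G$ of $g_r$ through the argument.
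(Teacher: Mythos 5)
Your proposal is correct and follows essentially the same route as the paper: both decompose $\E[\Delta(u_{i+1}^t)]$ and the biased term into $\tfrac1n$-weighted sums over the component index $r$, identify the conditional law of the iterate given $\{\sigma_i^t=r\}$ as $\D_{i+1,t}^{(r)}$, and then bound each summand via Kantorovich duality for the $G$-Lipschitz test function followed by $\W_1\le\W_2$ (the paper phrases the unconditional law via an independent ``ghost'' permutation $\tilde\sigma^t$, which is just a device for what you express directly with $\D_{i+1,t}$). The only shared imprecision is that $g_r(x,y)=L_r(x,y^\star)-L_r(x^\star,y)$ is really $\sqrt{2}G$-Lipschitz in the joint Euclidean norm, a constant-factor looseness present in the paper's own proof as well.
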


\begin{proof}
Since $\sigma^t$ and $\tilde\sigma^t$ are independently generated permutations of $[n]$, the iterates $\{u_i^t\}_{1 \leq i \leq n} = \{u_i^t(\sigma^t)\}_{1 \leq i \leq n}$ and $\{u_i^t(\tilde\sigma^t)\}_{1 \leq i \leq n}$ are i.i.d. Thus, we have:
\begin{align*}
    \E[\Delta(u_{i+1}^t)] = \E\big[ L_{\sigma_i^t}(\x_{i+1}^t(\tilde\sigma^t), y^\star) - L_{\sigma_i^t}(\x^\star, \y_{i+1}^t(\tilde\sigma^t)) \big],
\end{align*}
and thus:
\begin{align} \nonumber
    &\Big| \E[\Delta(u_{i+1}^t)] - \E\big[ L_{\sigma_i^t}(\x_{i+1}^t, y^\star) - L_{\sigma_i^t}(\x^\star, y_{i+1}^t) \big] \Big| \\ \nonumber
    = \hspace{0.5mm} &\Big| \E\big[ L_{\sigma_i^t}(\x_{i+1}^t(\tilde\sigma^t), y^\star) - L_{\sigma_i^t}(\x^\star, \y_{i+1}^t(\tilde\sigma^t)) \big] - \E\big[ L_{\sigma_i^t}(\x_{i+1}^t, y^\star) - L_{\sigma_i^t}(\x^\star, y_{i+1}^t) \big] \Big| \\ \label{Eqn: App, conditional expectation}
    = \hspace{0.5mm} &\Bigg| \frac{1}{n} \sum_{r=1}^n \E\big[ L_r(\x_{i+1}^t(\tilde\sigma^t), y^\star) - L_r(\x^\star, \y_{i+1}^t(\tilde\sigma^t)) \big] \\ \nonumber
    &\hspace{1cm} - \frac{1}{n} \sum_{r=1}^n \E\big[ L_r(\x_{i+1}^t, y^\star) - L_r(\x^\star, y_{i+1}^t) \big| \sigma_i^t = r \big] \Bigg| \\ \nonumber
    \leq \hspace{0.5mm} & \frac{1}{n} \sum_{r=1}^n \Big| \E\big[ L_r(\x_{i+1}^t(\tilde\sigma^t), y^\star) - L_r(\x^\star, \y_{i+1}^t(\tilde\sigma^t)) \big] - \E\big[ L_r(\x_{i+1}^t, y^\star) - L_r(\x^\star, y_{i+1}^t) \big| \sigma_i^t = r \big] \Big| \\ \label{Eqn: App, Lipschitz-ness of L}
    \leq \hspace{0.5mm} & \frac{1}{n} \sum_{r=1}^n \sup_{g \in \text{Lip}(G)} \Big( \E\big[ g(\x_{i+1}^t(\tilde\sigma^t), \y_{i+1}^t(\tilde\sigma^t)) \big] - \E\big[ g(\x_{i+1}^t, \y_{i+1}^t) | \sigma_i^t = r \big] \Big) \\ \label{Eqn: App, Kantorovich, in proof}
    \leq \hspace{0.5mm} &\frac{1}{n} \sum_{r=1}^n G \cdot \W_1(\D_{i+1, t}, \D_{i+1, t}^{(r)}) \\ \label{Eqn: App, W1 <= W2}
    \leq \hspace{0.5mm} &\frac{1}{n} \sum_{r=1}^n G \cdot \W_2(\D_{i+1, t}, \D_{i+1, t}^{(r)}),
\end{align}
where \eqref{Eqn: App, conditional expectation} follows by properties of the conditional expectation on $\{\sigma_i^t = r\}$ and the fact that $\sigma^t$ and $\tilde\sigma^t$ are independent, \eqref{Eqn: App, Lipschitz-ness of L} follows from the fact that $L$ is Lipschitz, \eqref{Eqn: App, Kantorovich, in proof} follows from Proposition \ref{Prop: Kantorovich Duality}, and \eqref{Eqn: App, W1 <= W2} follows from the fact that $\W_1(\mu, \nu) \leq \W_2(\mu, \nu)$ for any two probability distributions $\mu, \nu$.
\end{proof}

The next lemma bounds the difference in the iterates $\{u_i^t(\sigma^t)\}$ and $\{u_i^t(\tilde\sigma^t)\}$ (assuming, as before, that $\sigma^0, \cdots, \sigma^{t-1}$ were fixed and identical for both sequences.)

\begin{lemma} \label{Lemma: App, u and u tilde bound}
Denote, with a slight abuse of notation, $u_i^t := u_i^t(\sigma^t)$ and $\tilde u_i^t := u_i^t(\tilde\sigma^t)$. Then:
\begin{align*}
    \Vert u_{i+1}^t - \tilde u_{i+1}^t \Vert_2 &\leq \Bigg(6nd + 14n + 2 \cdot \sum_{i=1}^{n}\textbf{\emph{1}}\{ \sigma_i^t \ne \tilde \sigma_i^t \} \Bigg) G \cdot \eta^t + 6nd M_L  \cdot \frac{\eta^t}{\queryRadius^t}.
\end{align*}
\end{lemma}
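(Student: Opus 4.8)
The plan is to bound $\Vert u_{i+1}^t - \tilde u_{i+1}^t \Vert_2$ by unrolling the OGDA-RR update \eqref{Eqn: OGDA-RR, Update} for both permutation sequences and tracking how the discrepancy propagates across the inner loop $i = 0, 1, \cdots, n-1$. Since both sequences share the same permutations $\sigma^0, \cdots, \sigma^{t-1}$ and the same starting iterates $u_0^t, u_{-1}^t$ at the beginning of epoch $t$, the two trajectories coincide until the first index where $\sigma_i^t \ne \tilde\sigma_i^t$, and they only differ through which component operator $\hat F_{\sigma_i^t}$ versus $\hat F_{\tilde\sigma_i^t}$ is applied. First I would write the difference $u_{i+1}^t - \tilde u_{i+1}^t$ as a projection of one point minus a projection of another, and use the non-expansiveness of $\proj_{\X\times\Y}$ (the second inequality in Proposition \ref{Prop: App, Projection, 1}) to drop the projection and reduce to bounding the difference of the pre-projection arguments.

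Next I would decompose the difference of arguments into two kinds of terms. The first kind comes from the accumulated discrepancy in the iterates themselves, $\Vert u_i^t - \tilde u_i^t \Vert_2$ and $\Vert u_{i-1}^t - \tilde u_{i-1}^t \Vert_2$, fed through the three operator evaluations in \eqref{Eqn: OGDA-RR, Update}. Here I would \emph{not} try to use Lipschitzness of $\hat F$ in $u$ directly (the zeroth-order estimator is not Lipschitz in its point argument), but instead bound each such operator term crudely by its magnitude via \eqref{Eqn: Prop, ZO grad, bound}, i.e. $\Vert \hat F_j(u;\queryRadius^t,v)\Vert_2 \leq dG + dM_L/\queryRadius^t$, whenever the two sequences apply operators that may differ. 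The second kind of term arises precisely at indices where $\sigma_i^t \ne \tilde\sigma_i^t$: at such an index the two updates use genuinely different component functions, contributing an extra operator-magnitude term gated by the indicator $\textbf{1}\{\sigma_i^t \ne \tilde\sigma_i^t\}$. Summing these contributions over the inner loop and carefully counting how many operator evaluations of each type appear should yield the coefficient structure $(6nd + 14n + 2\sum_i \textbf{1}\{\sigma_i^t \ne \tilde\sigma_i^t\})G\eta^t + 6ndM_L \cdot \eta^t/\queryRadius^t$, where the $G\eta^t$ terms collect the $G$-part of the bound \eqref{Eqn: Prop, ZO grad, bound} and the $M_L\eta^t/\queryRadius^t$ terms collect the $dM_L/\queryRadius^t$ part.

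The main obstacle I anticipate is the bookkeeping in the recursion, not any single inequality. Because the update at index $i$ involves iterates at \emph{both} $i$ and $i-1$ (an optimistic/extrapolation step) and operators indexed by both $\sigma_i^t$ and $\sigma_{i-1}^t$, the discrepancy $\Vert u_{i+1}^t - \tilde u_{i+1}^t\Vert_2$ depends on a two-step history, so the recursion is not a clean first-order one and I would have to set up the telescoping/accumulation argument with care to avoid double counting the permutation-mismatch indicators and to correctly propagate the $\queryRadius^t$ dependence. I would handle this by establishing a per-step increment bound of the form $\Vert u_{i+1}^t - \tilde u_{i+1}^t\Vert_2 \leq \Vert u_i^t - \tilde u_i^t\Vert_2 + (\text{operator terms}) \cdot \eta^t$, where the operator terms are uniformly bounded using \eqref{Eqn: Prop, ZO grad, bound} and the mismatch terms are isolated by the indicator, and then sum from $i=0$ through the current index. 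A subtle point is that the shared random directions $v_i^t$ and the shared $\queryRadius^t$ across both trajectories mean that many operator evaluations cancel exactly when $\sigma_i^t = \tilde\sigma_i^t$, so the coupling of the two runs (same $v_i^t$, same $\queryRadius^t$) is what keeps the bound from blowing up; ensuring this cancellation is exploited wherever the permutations agree is the crux, and where they disagree the indicator absorbs the residual.
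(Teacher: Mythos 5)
Your proposal is correct, but it takes a genuinely different and in fact more elementary route than the paper. The paper's proof interpolates through a chain of auxiliary iterates: it compares the zeroth-order OGDA update to the exact first-order OGDA update (bounded via $\Vert \hat F - F\Vert \leq (d+1)G + dM_L/\queryRadius^t$ from Proposition \ref{Prop: App, ZO grad mean, variance}), then the first-order OGDA update to a proximal-point update (bounded by $4G\eta^t$ via Lipschitzness), and finally couples the two proximal-point updates, using the monotonicity of $F_{\sigma_i^t}$ together with the variational inequality for the projection to show the proximal step is non-expansive in its initial condition when $\sigma_i^t = \tilde\sigma_i^t$, and contributes an extra $2G\eta^t$ exactly when $\sigma_i^t \ne \tilde\sigma_i^t$ --- this is where the indicator term and the $14n$ constant come from. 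You instead get the coefficient $1$ on $\Vert u_i^t - \tilde u_i^t\Vert_2$ directly from the non-expansiveness of $\proj_{\X\times\Y}$ and bound all three operator-difference terms by twice their magnitudes via \eqref{Eqn: Prop, ZO grad, bound}, giving a per-step increment of $6\eta^t(dG + dM_L/\queryRadius^t)$ and hence, after summing over the at most $n$ inner iterations, the bound $6ndG\eta^t + 6ndM_L\eta^t/\queryRadius^t$, which is dominated by the stated right-hand side; you are also right to avoid Lipschitzness of $\hat F$ in $u$, since its modulus $dG/\queryRadius^t$ would compound multiplicatively across the epoch. One caveat: your closing claim that the ``cancellation wherever the permutations agree'' is the crux is misleading --- once the trajectories diverge at the first mismatch, the evaluation points $u_i^t$ and $\tilde u_i^t$ differ, so the shared $v_i^t$ and $\sigma_i^t = \tilde\sigma_i^t$ do not produce cancellation at later steps; your magnitude bound is what actually carries the argument there, so no gap results. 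What the paper's finer decomposition buys is the isolation of the permutation-mismatch contribution as an $O(\eta^t)$ indicator term separate from the $O(n d \eta^t)$ zeroth-order error, mirroring the standard trajectory-stability argument behind the Wasserstein coupling; your version suffices for this lemma and yields a slightly tighter constant, at the cost of obscuring that structure.
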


\begin{proof}
Our proof strategy is to bound the differences between zeroth-order and first-order OGDA updates, and between the OGDA and proximal point updates. To this end, we define:
\begin{align*}
    u_{i+1}^t &= \proj_{\X \times \Y} \Big(u_{i}^t - \eta^t \hat{F}_{\sigma_i^t}(u_{i}^t; \queryRadius^t,v_{i}^t)-\eta^t \hat{F}_{\sigma_{i-1}^t}(u_{i}^t;\queryRadius^t,v_{i}^t) 
    + \eta^t\hat{F}_{\sigma_{i-1}^t}(u_{i-1}^t;\queryRadius^t,v_{i-1}^t) \Big), \\
    \tilde u_{i+1}^t &= \proj_{\X \times \Y} \Big(\tilde u_{i}^t - \eta^t \hat{F}_{\tilde \sigma_i^t}(\tilde u_{i}^t; \queryRadius^t,v_{i}^t)-\eta^t \hat{F}_{\tilde \sigma_{i-1}^t}(\tilde  u_{i}^t;\queryRadius^t,v_{i}^t) 
    + \eta^t\hat{F}_{\tilde\sigma_{i-1}^t}(\tilde  u_{i-1}^t;\queryRadius^t,v_{i-1}^t) \Big), \\
    v_{i+1}^t &= \proj_{\X \times \Y} \Big(u_{i}^t - \eta^t F_{\sigma_i^t}(u_{i}^t)-\eta^t F_{\sigma_{i-1}^t}(u_{i}^t) 
    + \eta^t F_{\sigma_{i-1}^t}(u_{i-1}^t) \Big), \\
    \tilde v_{i+1}^t &= \proj_{\X \times \Y} \Big(\tilde u_{i}^t - \eta^t F_{\tilde \sigma_i^t}(\tilde  u_{i}^t)-\eta^t F_{\tilde \sigma_{i-1}^t}(\tilde u_{i}^t) 
    + \eta^t F_{\tilde \sigma_{i-1}^t}(\tilde u_{i-1}^t) \Big), \\
    w_{i+1}^t &= \proj_{\X \times \Y} \Big(u_i^t - \eta^t F_{\sigma_i^t}(w_{i+1}^t) \Big), \\
    \tilde w_{i+1}^t &= \proj_{\X \times \Y} \Big(\tilde u_i^t - \eta^t F_{\tilde \sigma_i^t}(\tilde w_{i+1}^t) \Big).
\end{align*}
By the triangle inequality:
\begin{align} \label{Eqn: App, u i+1 t bound, triangle inequality}
    \Vert u_{i+1}^t - \tilde u_{i+1}^t \Vert_2 &\leq \Vert u_{i+1}^t - v_{i+1}^t \Vert_2 + \Vert v_{i+1}^t - w_{i+1}^t \Vert_2 + \Vert w_{i+1}^t - \tilde w_{i+1}^t \Vert_2 \\ \nonumber
    &\hspace{1cm} + \Vert \tilde w_{i+1}^t - \tilde v_{i+1}^t \Vert_2 + \Vert \tilde v_{i+1}^t - \tilde u_{i+1}^t \Vert_2.
\end{align}
Observe that bounding the fourth term is equivalent to bounding the second term, and bounding the fifth term is equivalent to bounding the first term.

To bound the first term on the right hand side, we use Proposition \ref{Prop: App, ZO grad mean, variance} to conclude that:
\begin{align} \nonumber
    \Vert u_{i+1}^t - v_{i+1}^t \Vert_2 &\leq \eta^t \cdot \Vert \hat{F}_{\sigma_i^t}(u_i^t; \queryRadius^t, v_i^t) - F_{\sigma_i^t}(u_i^t) \Vert + \eta^t \cdot \Vert \hat{F}_{\sigma_{i-1}^t}(u_i^t; \queryRadius^t, v_i^t) - F_{\sigma_{i-1}^t}(u_i^t) \Vert \\ \nonumber
    &\hspace{1cm} + \eta^t \cdot \Vert \hat{F}_{\sigma_{i-1}^t}(u_{i-1}^t; \queryRadius^t, v_{i-1}^t) - F_{\sigma_{i-1}^t}(u_{i-1}^t) \Vert \\ \label{Eqn: App, u diff, 1}
    &\leq 3(d+1)G \eta^t + 3d M_L \cdot \frac{\eta^t}{\queryRadius^t}
\end{align}
For the second term, we use the $G$-Lipschitzness of $L_r$, for each $r \in [n]$ to conclude that:
\begin{align} \nonumber
    \Vert v_{i+1}^t - w_{i+1}^t \Vert_2 &\leq \eta^t \cdot |F_{\sigma_i^t}(u_i^t)| + \eta^t \cdot |F_{\sigma_{i-1}^t}(u_i^t)| + \eta^t \cdot |F_{\sigma_{i-1}^t}(u_{i-1}^t)| + \eta^t \cdot |F_{\sigma_i^t}(w_{i+1}^t)| \\ \label{Eqn: App, u diff, 2}
    &\leq 4G \cdot \eta^t.
\end{align}
For the third term, we observe that if $\sigma_i^t \ne \tilde\sigma_i^t$, then:
\begin{align} \nonumber
    \Vert w_{i+1}^t - \tilde w_{i+1}^t \Vert_2 &\leq \Vert u_i^t - \tilde u_i^t \Vert_2 + \eta^t \cdot \Vert F_{\sigma_i^t}(w_{i+1}^t) - F_{\tilde\sigma_i^t}(\tilde w_{i+1}^t) \Vert_2 \\ \label{Eqn: App, u diff, 3, 1}
    &\leq \Vert u_i^t - \tilde u_i^t \Vert_2 + 2G \cdot \eta^t.
\end{align}
On the other hand, if $\sigma_i^t = \tilde\sigma_i^t$, then:
\begin{align*}
    w_{i+1}^t &= \proj_{\X \times \Y} \Big(u_i^t - \eta^t F_{\sigma_i^t}(w_{i+1}^t) \Big), \\
    \tilde w_{i+1}^t &= \proj_{\X \times \Y} \Big(\tilde u_i^t - \eta^t F_{\sigma_i^t}(\tilde w_{i+1}^t) \Big),
\end{align*}
so we have:
\begin{align} \nonumber
    &\Vert w_{i+1}^t - \tilde w_{i+1}^t \Vert_2^2 \\ \label{Eqn: App, bound of w, Ineq, 1}
    \leq \hspace{0.5mm} &(w_{i+1}^t - \tilde w_{i+1}^t)^\top \big( (u_i^t - \eta \cdot F_{\sigma_i^t}(w_{i+1}^t)) - (\tilde u_i^t - \eta \cdot F_{\sigma_i^t}(\tilde w_{i+1}^t)) \big) \\ \nonumber
    = \hspace{0.5mm} &(w_{i+1}^t - \tilde w_{i+1}^t)^\top (u_i^t - \tilde u_i^t) - \eta(w_{i+1}^t - \tilde w_{i+1}^t)^\top \big( F_{\sigma_i^t}(w_{i+1}^t)) - F_{\sigma_i^t}(\tilde w_{i+1}^t)) \big) \\ \label{Eqn: App, bound of w, Ineq, 2}
    \leq \hspace{0.5mm} &(w_{i+1}^t - \tilde w_{i+1}^t)^\top (u_i^t - \tilde u_i^t) \\ \label{Eqn: App, u diff, 3, 2}
    \leq \hspace{0.5mm} &\Vert w_{i+1}^t - \tilde w_{i+1}^t \Vert_2 \cdot \Vert u_i^t - \tilde u_i^t \Vert_2,
\end{align}
so $\Vert w_{i+1}^t - \tilde w_{i+1}^t \Vert_2 \leq \Vert u_i^t - \tilde u_i^t \Vert_2$. Here, \eqref{Eqn: App, bound of w, Ineq, 1} follows from the definitions of $w_{i+1}^t$ and $\tilde w_{i+1}^t$, as well as Proposition \ref{Prop: App, Projection, 1}, while \eqref{Eqn: App, bound of w, Ineq, 2} holds because the monotonicity of $F_i$, for each $i \in [n]$, implies that $(w_{i+1}^t - \tilde w_{i+1}^t)^\top \big( F_{\sigma_i^t}(w_{i+1}^t) - F_{\sigma_i^t}(\tilde w_{i+1}^t) \big) \geq 0$. Putting together \eqref{Eqn: App, u diff, 1}, \eqref{Eqn: App, u diff, 2}, \eqref{Eqn: App, u diff, 3, 1}, \eqref{Eqn: App, u diff, 3, 2}, we have:
\begin{align*}
     \Vert u_{i+1}^t - \tilde u_{i+1}^t \Vert_2 &\leq \Vert u_i^t - \tilde u_i^t \Vert_2 + (6d + 14) G \cdot \eta^t + 6d M_L  \cdot \frac{\eta^t}{\queryRadius^t} \\
     &\hspace{1cm} + 2G \cdot \textbf{1}\{ \sigma_i^t \ne \tilde \sigma_i^t \} \cdot \eta^t,
\end{align*}
where the indicator $\textbf{1}(A)$ returns 1 if the given event $A$ occurs, and 0 otherwise. 

Since \(u_0^{t}=\td{u}_0^{t}\), we can iteratively apply the above inequality to obtain that, for any and epoch \(t\) and \(i\in [n]\): 
\begin{align*}
     \Vert u_{i+1}^t - \tilde u_{i+1}^t \Vert_2 &\leq  (6d + 14) nG \cdot \eta^t + 6nd M_L  \cdot \frac{\eta^t}{\queryRadius^t} +2\eta_tG \cdot \sum_{i=1}^{n}\textbf{1}\{ \sigma_i^t \ne \tilde \sigma_i^t \},
\end{align*}

\end{proof}

\begin{remark}
In the theorems and lemmas below, we will be concerned with the case where $\sigma^t$ and $\tilde\sigma^t$ have the following specific relationship. Let $\mathcal{R}_n$ denote the set of all random permutations over the set $[n]$. For each $l, m \in [n]$, let $S_{l, m}: \mathcal{R}_n \rightarrow \mathcal{R}_n$ denote the map that swaps,
for each input permutation $\sigma$, the $l$-th and $m$-th entries. For each $r,i \in [n]$, define the map $\omega_{r, i}: \mathcal{R}_n \rightarrow \mathcal{R}_n$ as follows: 
\begin{align*}
    \omega_{r, i}(\sigma) =
    \begin{cases}
    \sigma, & \text{if } \sigma_{i-1} = r, \\
    S_{i-1, j}(\sigma), & \textnormal{if } \sigma_j = r \textnormal{ and } j \neq i-1. 
    \end{cases}. 
\end{align*}
Intuitively, $\omega_{r, i}$ performs a single swap such that the $(i-1)$-th position of the permutation is $r$. Clearly, if $\sigma^t$ is a random permutation (i.e., selected from a uniform distribution over $\mathcal{R}_n$), then $\omega_{r, i}(\sigma^s)$ has the same distribution as $\sigma^t|(\sigma^t_{i-1} = r)$. Based on this construction, we have $u_i(\sigma^t) \sim \mathcal{D}_{i, t}$ and $u_{i}(\omega_{r,i}(\sigma^t)) \sim \mathcal{D}_{i, t}^{(r)}$. This gives a coupling between $\mathcal{D}_{s,t}$ and $\mathcal{D}_{s,t}^{(r)}$.
Since $\sigma^t$ and $\tilde \sigma^t$ differ by at most two entries, by iteratively applying Lemma \ref{Lemma: App, u and u tilde bound}, we have:
\begin{align*}
    \Vert u_{i+1}^t - \tilde u_{i+1}^t \Vert_2 &\leq n \Bigg( (6d + 14) G \cdot \eta^t + 6d M_L \cdot \frac{\eta^t}{\queryRadius^t} \Bigg) + 4G \cdot \eta^t \\
    &= (6nd + 14n + 4) G \cdot \eta^t + 6n d M_L \cdot \frac{\eta^t}{\queryRadius^t},
\end{align*}
as claimed.
\end{remark}
\begin{lemma} \label{Lemma: App, Inner Product of F}
If $\eta^t \leq 1/(2\ell)$ for each $t \in \{0, 1, \cdots, T-1\}$, the iterates $\{u_i^t\} = \{(\x_i^t, \y_i^t)\}$ of the OGDA-RR algorithm satisfy, for each $u \in \X \times \Y$:
\begin{align*}
    &2 \eta^t \cdot \E\Big[ \Big\langle F_{\sigma_i^t}(u_{i+1}^t), u_{i+1}^t - u \Big\rangle \Big] \\
    \leq \hspace{0.5mm} &\E\big[ \Vert u_i^t - u \Vert_2^2 \big] - \E\big[ \Vert u_{i+1}^t - u \Vert_2^2 \big] - \frac{1}{2} \E\big[ \Vert u_{i+1}^t - u_i^t \Vert_2^2 \big] + \frac{1}{2} \E\big[ \Vert u_i^t - u_{i-1}^t \Vert_2^2 \big] \\
    &\hspace{1cm} + 2 \eta^t \cdot \E\Big[ \Big\langle F_{\sigma_i^t}(u_{i+1}^t) - F_{\sigma_i^t}(u_i^t), u_{i+1}^t - u \Big\rangle \Big] \\
    &\hspace{1cm} - 2 \eta^t \cdot \E\Big[ \Big\langle F_{\sigma_{i-1}^t}(u_{i}^t) - F_{\sigma_{i-1}^t}(u_{i-1}^t), u_{i}^t - u \Big\rangle \Big] \\
    &\hspace{1cm} + 6C_1 \cdot \left(\eta^t \queryRadius^t + (\eta^t)^2 \queryRadius^t + (\eta^t)^2 + \frac{(\eta^t)^2}{\queryRadius^t} + \frac{(\eta^t)^2}{(\queryRadius^t)^2} \right),
\end{align*}
where $C_1 := d^2\max\big\{6G \ell D, 18G^2 + 6 M_L \ell D, 30 M_L G, 12 M_L^2 \big\}$ is a constant independent of the sequences $\{\eta^t\}$ and $\{\queryRadius^t\}$.
\end{lemma}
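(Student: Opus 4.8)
The plan is to reinterpret the zeroth-order OGDA-RR update \eqref{Eqn: OGDA-RR, Update} as an \emph{implicit} (proximal-point-type) projection step with an explicit perturbation, and then invoke Lemma \ref{Lemma: Proj, applied to OGDA Thm}. Concretely, I would set $z_{k+1} = u_{i+1}^t$, $z_k = u_i^t$, $\eta = \eta^t$, and $F = F_{\sigma_i^t}$, and then \emph{define} $\gamma_i^t$ to be exactly the quantity that forces the actual update to coincide with $u_{i+1}^t = \proj_{\X\times\Y}(u_i^t - \eta^t F_{\sigma_i^t}(u_{i+1}^t) + \gamma_i^t)$, namely
\[
\gamma_i^t := \eta^t F_{\sigma_i^t}(u_{i+1}^t) - \eta^t\big(\hat F_{\sigma_i^t}(u_i^t) + \hat F_{\sigma_{i-1}^t}(u_i^t) - \hat F_{\sigma_{i-1}^t}(u_{i-1}^t)\big).
\]
Applying Lemma \ref{Lemma: Proj, applied to OGDA Thm} with this identification and multiplying through by $2\eta^t$ produces the telescoping pair $\|u_i^t-u\|^2 - \|u_{i+1}^t-u\|^2$, the term $-\|u_{i+1}^t-u_i^t\|^2$, and the perturbation term $2\langle \gamma_i^t, u_{i+1}^t - u\rangle$, all without invoking monotonicity or smoothness.

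The second step is to unpack $\gamma_i^t$. Writing $\hat F = F + e$ with the three estimation errors $e_i := \hat F_{\sigma_i^t}(u_i^t;\queryRadius^t,v_i^t) - F_{\sigma_i^t}(u_i^t)$, $e_{i-1}^{(i)} := \hat F_{\sigma_{i-1}^t}(u_i^t;\queryRadius^t,v_i^t) - F_{\sigma_{i-1}^t}(u_i^t)$, and $e_{i-1}^{(i-1)} := \hat F_{\sigma_{i-1}^t}(u_{i-1}^t;\queryRadius^t,v_{i-1}^t) - F_{\sigma_{i-1}^t}(u_{i-1}^t)$, a direct computation gives $\gamma_i^t = \eta^t[F_{\sigma_i^t}(u_{i+1}^t) - F_{\sigma_i^t}(u_i^t)] - \eta^t[F_{\sigma_{i-1}^t}(u_i^t) - F_{\sigma_{i-1}^t}(u_{i-1}^t)] - \eta^t[e_i + e_{i-1}^{(i)} - e_{i-1}^{(i-1)}]$. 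The first two brackets are exactly the two ``optimism'' cross terms on the right-hand side of the statement, except that the second is paired with $u_{i+1}^t - u$ rather than $u_i^t - u$. To repair this index mismatch I would split $u_{i+1}^t - u = (u_{i+1}^t - u_i^t) + (u_i^t - u)$; the leftover $2\eta^t\langle F_{\sigma_{i-1}^t}(u_i^t) - F_{\sigma_{i-1}^t}(u_{i-1}^t), u_{i+1}^t - u_i^t\rangle$ is controlled by $\ell$-smoothness, Cauchy--Schwarz, and Young's inequality as $\eta^t\ell(\|u_i^t-u_{i-1}^t\|^2 + \|u_{i+1}^t-u_i^t\|^2)$, and the hypothesis $\eta^t \le 1/(2\ell)$ upgrades this to $\tfrac12(\|u_i^t-u_{i-1}^t\|^2 + \|u_{i+1}^t-u_i^t\|^2)$. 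Combining with the $-\|u_{i+1}^t-u_i^t\|^2$ term already produced yields precisely the $-\tfrac12\|u_{i+1}^t-u_i^t\|^2 + \tfrac12\|u_i^t-u_{i-1}^t\|^2$ terms in the statement.

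The remaining, and most delicate, task is to bound the expectation of the error term $-2\eta^t\langle e_i + e_{i-1}^{(i)} - e_{i-1}^{(i-1)}, u_{i+1}^t - u\rangle$ and absorb it into $6C_1(\eta^t\queryRadius^t + (\eta^t)^2\queryRadius^t + (\eta^t)^2 + (\eta^t)^2/\queryRadius^t + (\eta^t)^2/(\queryRadius^t)^2)$. The idea is to split each error into a bias part and a mean-zero noise part via Proposition \ref{Prop: App, ZO grad mean, variance}: \eqref{Eqn: Prop, ZO grad mean, equality} identifies the conditional mean with $\nabla L^{\queryRadius^t}_\sigma(\cdot)$, \eqref{Eqn: Prop, ZO grad, bound from smoothed version} bounds the bias by $\ell\queryRadius^t$, and \eqref{Eqn: Prop, ZO grad, bound} bounds each $\|\hat F\|$ (hence each $\|e\|$ and each step $\|u_{i+1}^t-u_i^t\|$) by a multiple of $dG + dM_L/\queryRadius^t$. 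For each error I would split the inner product against the \emph{last iterate measurable before the relevant search direction is drawn}: $e_i$ and $e_{i-1}^{(i)}$ (both built from $v_i^t$) against $u_i^t - u$, and $e_{i-1}^{(i-1)}$ (built from $v_{i-1}^t$) against $u_{i-1}^t - u$. Conditioning on the appropriate filtration ($\mathcal F_i^t$ generated by $\sigma^t$ and $v_0^t,\dots,v_{i-1}^t$ for the $v_i^t$-based errors, and the coarser $\mathcal F_{i-1}^t$ for the $v_{i-1}^t$-based one), the noise contributions of the $(\cdot - u)$ pieces vanish by the tower property, the bias contributions give the $\eta^t\queryRadius^t$ bucket, and the residual $\langle e, u_{i+1}^t - (\text{past iterate})\rangle$ pieces are bounded by $\|e\|\cdot\|\text{step}\| = O(\eta^t(dG + dM_L/\queryRadius^t)^2)$, producing the $(\eta^t)^2$, $(\eta^t)^2\queryRadius^t$, $(\eta^t)^2/\queryRadius^t$, and $(\eta^t)^2/(\queryRadius^t)^2$ buckets once the square is expanded. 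The main obstacle is exactly this correlation bookkeeping: because $v_{i-1}^t$ enters both $u_i^t$ and $u_{i+1}^t$, one cannot split $e_{i-1}^{(i-1)}$ against $u_i^t$ and still run a mean-zero argument, so the proof must split against $u_{i-1}^t$ and pay for the larger displacement $\|u_{i+1}^t - u_{i-1}^t\|$ with a second application of the step-size bound; forcing all three error terms into only these five buckets (with no stray $\eta^t$ or $\eta^t/\queryRadius^t$ term) is what pins down the constant $C_1$.
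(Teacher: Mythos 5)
Your proposal follows essentially the same route as the paper's proof: recast the update as a perturbed implicit projection step, invoke Lemma \ref{Lemma: Proj, applied to OGDA Thm}, split the optimism cross term via $u_{i+1}^t-u=(u_{i+1}^t-u_i^t)+(u_i^t-u)$ with Young's inequality and $\eta^t\le 1/(2\ell)$, and decompose each zeroth-order error into a bias part (bounded by $\ell D\varepsilon^t$ via Proposition \ref{Prop: App, ZO grad mean, variance}), a mean-zero part paired with a measurable iterate, and a residual paired with the step displacement. If anything you are slightly more careful than the paper, which only works out the $E_{i,1}^t$ term and asserts the other two are ``similarly bounded,'' whereas you explicitly flag that the $v_{i-1}^t$-based error must be centered against $u_{i-1}^t-u$ and that the residual then pays for the two-step displacement $\Vert u_{i+1}^t-u_{i-1}^t\Vert$.
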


\begin{proof}
The iterates of the OGDA-RR algorithm are given by:
\begin{align} \nonumber
    u_{i+1}^t &= \proj_{\X \times \Y} \Big( u_i^t - \eta^t \hat{F}_{\sigma_i^t}(u_i^t; \queryRadius^t, v_i^t) - \eta^t \hat{F}_{\sigma_{i-1}^t}(u_i^t; \queryRadius^t, v_i^t) \\ \nonumber
    &\hspace{2cm} - \eta^t \hat{F}_{\sigma_{i-1}^t}(u_{i-1}^t; \queryRadius^t, v_{i-1}^t) \Big) \\ \label{Eqn: u in terms of gamma, E}
    &= \proj_{\X \times \Y} \Big( u_i^t - \eta^t F_{\sigma_i^t}(u_{i+1}^t) + \eta^t \big( \gamma_i^t + E_{i, 1}^t + E_{i, 2}^t + E_{i, 3}^t \big) \Big),
\end{align}
where we have defined:
\begin{align} 
    \nonumber \gamma_i^t &:= F_{\sigma_i^t }(u_{i+1}^t) -  F_{\sigma_i^t}(u_i^t)-F_{\sigma_{i-1}^t}(u_i^t)+F_{\sigma_{i-1}^t}(u_{i-1}^t), \\ 
    \nonumber
    E_{i, 1}^t &:= F_{\sigma_i^t}(u_i^t) - \hat{F}_{\sigma_i^t}(u_{i}^t;\queryRadius^t,v_i^t), \\
    \nonumber
    E_{i, 2}^t &:=  F_{\sigma_{i-1}^t}(u_i^t) - \hat{F}_{\sigma_{i-1}^t}(u_{i}^t;\queryRadius^t,v_i^t), \\
    \nonumber
    E_{i, 3}^t &:= F_{\sigma_{i-1}^t}(u_{i-1}^t)- \hat{F}_{\sigma_{i-1}^t}(u_{i-1}^t;\queryRadius^t,v_{i-1}^t).
\end{align}
First, by applying Lemma \ref{Lemma: Proj, applied to OGDA Thm}  we have:
\begin{align} \label{Eqn: OGDA Thm, Ineq 1}
    &2 \eta^t \cdot \E\Big[ \Big\langle F_{\sigma_i^t}(u_{i+1}^t), u_{i+1}^t - u \Big\rangle \Big] \\ \nonumber
    \leq \hspace{0.5mm} &\E\big[ \Vert u_i^t - u \Vert_2^2 \big] - \E\big[ \Vert u_{i+1}^t - u \Vert_2^2 \big] - \E\big[ \Vert u_{i+1}^t - u_i^t \Vert_2^2 \big] \\ \nonumber
    &\hspace{1cm} + 2 \eta^t \cdot \E\Big[ \Big\langle \gamma_i^t, u_{i+1}^t - u \Big\rangle \Big] + \sum_{k=1}^3 2 \eta^t \cdot \E\Big[ \Big\langle E_{i,k}^t, u_{i+1}^t - u \Big\rangle \Big].
\end{align}
Below, we proceed to bound the inner product terms on the right-hand-side of \eqref{Eqn: OGDA Thm, Ineq 1}. First, we bound $\big\langle \gamma_i^t, u_{i+1}^t - u \big\rangle$:
\begin{align} \nonumber
    \Big\langle \gamma_i^t, u_{i+1}^t - u \Big\rangle &= \Big\langle F_{\sigma_i^t}(u_{i+1}^t) - F_{\sigma_i^t}(u_i^t), u_{i+1}^t - u \Big\rangle \\ \nonumber
    &\hspace{1cm} - \Big\langle F_{\sigma_{i-1}^t}(u_i^t) - F_{\sigma_{i-1}^t}(u_{i-1}^t), u_{i+1}^t - u \Big\rangle \\ \nonumber
    &= \Big\langle F_{\sigma_i^t}(u_{i+1}^t) - F_{\sigma_i^t}(u_i^t), u_{i+1}^t - u \Big\rangle \\ \nonumber
    &\hspace{1cm} - \Big\langle F_{\sigma_{i-1}^t}(u_i^t) - F_{\sigma_{i-1}^t}(u_{i-1}^t), u_i^t - u \Big\rangle \\ \nonumber
    &\hspace{1cm} - \Big\langle F_{\sigma_{i-1}^t}(u_i^t) - F_{\sigma_{i-1}^t}(u_{i-1}^t), u_{i+1}^t - u_i^t \Big\rangle \\ \label{Eqn: OGDA Thm, gamma i t}
    &\leq \Big\langle F_{\sigma_i^t}(u_{i+1}^t) - F_{\sigma_i^t}(u_i^t), u_{i+1}^t - u \Big\rangle \\ \nonumber
    &\hspace{1cm} - \Big\langle F_{\sigma_{i-1}^t}(u_i^t) - F_{\sigma_{i-1}^t}(u_{i-1}^t), u_i^t - u \Big\rangle \\ \nonumber
    &\hspace{1cm} + \frac{1}{2} \ell \cdot \Vert u_i^t - u_{i-1}^t \Vert_2^2 + \frac{1}{2}\ell \cdot \Vert u_{i+1}^t - u_i^t \Vert_2^2.
\end{align}
Note that the final inequality follows by applying Young's inequality, and noting that $F$ is $\ell$-Lipschitz. Next, we bound $\langle E_{i,1}^t, u_{i+1}^t - u \rangle$:
\begin{align} \nonumber
    &\E\big[ \langle E_{i,1}^t, u_{i+1}^t - u \rangle \Big] \\ \nonumber
    = \hspace{0.5mm} &\E\Big[ \Big\langle F_{\sigma_i^t}(u_i^t) - \hat{F}_{\sigma_i^t}(u_i^t, \queryRadius^t, v_i^t), u_{i+1}^t - u \Big\rangle \Big] \\ \nonumber
    = \hspace{0.5mm} &\E\Big[ \Big\langle F_{\sigma_i^t}(u_i^t) - \nabla L_{\sigma_i^t}^{\queryRadius^t}(u_i^t), u_{i+1}^t - u \Big\rangle \Big] \\ \nonumber
    &\hspace{1cm} + \E\Big[ \Big\langle \E\big[ \hat{F}_{\sigma_i^t}(u_i^t; \queryRadius^t, v_i^t | u_i^t \big] - \hat{F}_{\sigma_i^t}(u_i^t, \queryRadius^t, v_i^t), u_{i+1}^t - u \Big\rangle \Big] \\ \label{Eqn: OGDA Thm, E i t, 3 terms}
    = \hspace{0.5mm} &\E\Big[ \Big\langle F_{\sigma_i^t}(u_i^t) - \nabla L_{\sigma_i^t}^{\queryRadius^t}(u_i^t), u_{i+1}^t - u \Big\rangle \Big] \\ \nonumber
    &\hspace{1cm} + \E\Big[ \Big\langle \E_v\big[ \hat{F}_{\sigma_i^t}(u_i^t; \queryRadius^t, v | u_i^t \big] - \hat{F}_{\sigma_i^t}(u_i^t, \queryRadius^t, v_i^t), u_i^t - u \Big\rangle \Big] \\ \nonumber
    &\hspace{1cm} + \E\Big[ \Big\langle \E_v\big[ \hat{F}_{\sigma_i^t}(u_i^t; \queryRadius^t, v | u_i^t \big] - \hat{F}_{\sigma_i^t}(u_i^t, \queryRadius^t, v_i^t), u_{i+1}^t - u_i^t \Big\rangle \Big],
\end{align}
where the first equality above follows by applying Proposition \ref{Prop: App, ZO grad mean, variance}, \eqref{Eqn: Prop, ZO grad mean, equality}, and we have used the shorthand $\E_v := \E_{v \sim \Unif(\sphere^{d-1})}$. (Recall that $L^\queryRadius(u) := \E_{v \sim \Unif(\sphere^{d-1})}\big[ L(u + \queryRadius v) \big]$) Next, we upper bound each of the three quantities in \eqref{Eqn: OGDA Thm, E i t, 3 terms}. First, by Proposition \ref{Prop: App, ZO grad mean, variance}, \eqref{Eqn: Prop, ZO grad, bound from smoothed version}, we have:
\begin{align} \nonumber
    &\E\Big[ \Big\langle F_{\sigma_i^t}(u_i^t) - \nabla L_{\sigma_i^t}^{\queryRadius^t}(u_i^t), u_{i+1}^t - u \Big\rangle \Big] \\ \nonumber
    \leq \hspace{0.5mm} &\E\Big[ \Vert F_{\sigma_i^t}(u_i^t) - \nabla L_{\sigma_i^t}^{\queryRadius^t}(u_i^t) \Vert_2 \cdot \Vert u_{i+1}^t - u \Vert_2 \Big] \\ \label{Eqn: OGDA Thm, Term 1/3}
    \leq \hspace{0.5mm} &\ell D \cdot \queryRadius^t,
\end{align}
with $C_1 > 0$ as given in Lemma \ref{Lemma: App, Inner Product of F}. Meanwhile, the law of iterated expectations can be used to bound the second quantity:
\begin{align} \nonumber
    &\E\Big[ \Big\langle \E_v\big[ \hat{F}_{\sigma_i^t}(u_i^t; \queryRadius^t, v) | u_i^t \big] - \hat{F}_{\sigma_i^t}(u_i^t, \queryRadius^t, v_i^t), u_i^t - u \Big\rangle \Big] \\
    \nonumber
    = \hspace{0.5mm} &\E\Big[ \E_v\Big[ \Big\langle \hat{F}_{\sigma_i^t}(u_i^t, \queryRadius^t, v_i^t), u_i^t - u \Big\rangle \big| u_i^t \Big] \Big] - \E\Big[ \Big\langle \hat{F}_{\sigma_i^t}(u_i^t, \queryRadius^t, v_i^t), u_i^t - u \Big\rangle \Big] \\ \label{Eqn: OGDA Thm, Term 2/3}
    = \hspace{0.5mm} &0,
\end{align}
and we can upper-bound the third quantity as shown below. By using the compactness of $\X \times \Y$ and the continuity of $L$, we have:
\begin{align} \nonumber
    &\E\Big[ \Big\langle \E_v\big[ \hat{F}_{\sigma_i^t}(u_i^t; \queryRadius^t, v) | u_i^t \big] - \hat{F}_{\sigma_i^t}(u_i^t, \queryRadius^t, v_i^t), u_{i+1}^t - u_i^t \Big\rangle \Big] \\ \nonumber
    \leq \hspace{0.5mm} & \Big( \big\Vert \E_v\big[ \hat{F}_{\sigma_i^t}(u_i^t; \queryRadius^t, v)| u_i^t \big] \big\Vert_2 + \Vert \hat{F}_{\sigma_i^t}(u_i^t, \queryRadius^t, v_i^t) \Vert \Big) \cdot \Vert u_{i+1}^t - u_i^t \Vert_2 \\ \nonumber
    \leq \hspace{0.5mm} &2 \cdot \frac{{d}}{\queryRadius^t} \cdot \sup_{\stackrel{u \in \X \times \Y}{v \sim \Unif(\sphere^{d-1})}} |L(u_i^t + \queryRadius^t v)| \cdot \Vert u_{i+1}^t - u_i^t \Vert_2, \\ \label{Eqn: OGDA Thm, Term 3/3, Part 1}
    \leq \hspace{0.5mm} &2 \cdot \frac{{d}}{\queryRadius^t} \cdot (M_L + \queryRadius^t G) \cdot \Vert u_{i+1}^t - u_i^t \Vert_2,
\end{align}
and using \eqref{Eqn: OGDA Thm, Term 1/3} and the bound for each $\Vert \hat{F}_{\sigma_i^t} \Vert_2$ given in \eqref{Eqn: OGDA Thm, Term 3/3, Part 1}, we have:
\begin{align} \nonumber
    &\Vert u_{i+1}^t - u_i^t \Vert_2 \\ \nonumber
    \leq \hspace{0.5mm} &\eta^t \cdot \Vert \hat{F}_{\sigma_i^t}(u_i^t; \queryRadius^t, v_i^t) + \hat{F}_{\sigma_{i-1}^t}(u_i^t; \queryRadius^t, v_i^t) - \hat{F}_{\sigma_{i-1}^t}(u_{i-1}^t; \queryRadius^t, v_{i-1}^t) \Vert \\ \nonumber
    \leq \hspace{0.5mm} &\eta^t \cdot \Vert F_{\sigma_i^t}(u_i^t) + F_{\sigma_{i-1}^t}(u_i^t) - F_{\sigma_{i-1}^t}(u_{i-1}^t) \Vert_2 \\ \nonumber
    &\hspace{5mm} + \eta^t{\color{black}d} \cdot \Vert \hat{F}_{\sigma_i^t}(u_i^t; \queryRadius^t, v_i^t) - F_{\sigma_i^t}(u_i^t) \Vert_2 \\ \nonumber
    &\hspace{5mm} + \eta^t{\color{black}d} \cdot \Vert \hat{F}_{\sigma_{i-1}^t}(u_i^t; \queryRadius^t, v_i^t) - F_{\sigma_{i-1}^t}(u_i^t) \Vert_2 \\ \nonumber
    &\hspace{5mm} + \eta^t {\color{black}d}\cdot \Vert \hat{F}_{\sigma_{i-1}^t}(u_{i-1}^t; \queryRadius^t, v_{i-1}^t) - F_{\sigma_{i-1}^t}(u_{i-1}^t) \Vert_2 \\ \label{Eqn: OGDA Thm, Term 3/3, Part 2}
    \leq \hspace{0.5mm} &3G \eta^t + 3 \eta^t{\color{black}d} \cdot \Bigg( 2(M_L + G \queryRadius^t) \cdot
    \frac{1}{\queryRadius^t} + \ell D \cdot \queryRadius^t \Bigg).
\end{align}
Substituting \eqref{Eqn: OGDA Thm, Term 3/3, Part 2} back into \eqref{Eqn: OGDA Thm, Term 3/3, Part 1}, we have:
\begin{align} \nonumber
    &\E\Big[ \Big\langle \E_v\big[ \hat{F}_{\sigma_i^t}(u_i^t; \queryRadius^t, v) | u_i^t \big] - \hat{F}_{\sigma_i^t}(u_i^t, \queryRadius^t, v_i^t), u_{i+1}^t - u_i^t \Big\rangle \Big] \\ \nonumber
    \leq \hspace{0.5mm} &{\color{black}d^2}\ell D 6 \eta^t G \cdot \queryRadius^t + 6{\color{black}d^2} \eta^t (3G^2 + M_L \ell D) + 30{\color{black}d^2} \eta^t M_L G \cdot \frac{1}{\queryRadius^t} + 12{\color{black}d^2} \eta^t M_L^2 \cdot \left( \frac{
    1}{\queryRadius^t} \right)^2 \\  \label{Eqn: OGDA Thm, Term 3/3}
    \leq \hspace{0.5mm} &C_1 \cdot \left( \eta^t \queryRadius^t + \eta^t + \frac{\eta^t}{\queryRadius^t} + \frac{\eta^t}{(\queryRadius^t)^2} \right),
\end{align}
where $C_1 := {\color{black}d^2} \cdot \max\big\{6G \ell D, 18G^2 + 6 M_L \ell D, 30 M_L G, 12 M_L^2 \big\}$ is a constant independent of the sequences $\{\eta^t\}$ and $\{\queryRadius^t\}$. The quantities $\E\big[ \langle E_{i,2}^t, u_{i+1}^t - u \rangle \Big]$ and $\E\big[ \langle E_{i,3}^t, u_{i+1}^t - u \rangle \Big]$ can be similarly bounded. Substituting \eqref{Eqn: OGDA Thm, Term 1/3}, \eqref{Eqn: OGDA Thm, Term 2/3}, \eqref{Eqn: OGDA Thm, Term 3/3} back into \eqref{Eqn: OGDA Thm, E i t, 3 terms}, and substituting \eqref{Eqn: OGDA Thm, E i t, 3 terms} and \eqref{Eqn: OGDA Thm, gamma i t} into \eqref{Eqn: OGDA Thm, Ineq 1}, we find that:
\begin{align*}
    &2 \eta^t \cdot \E\Big[ \Big\langle F_{\sigma_i^t}(u_{i+1}^t), u_{i+1}^t - u \Big\rangle \Big] \\
    = \hspace{0.5mm} &\E\big[ \Vert u_i^t - u \Vert_2^2 \big] - \E\big[ \Vert u_{i+1}^t - u \Vert_2^2 \big] - \E\big[ \Vert u_{i+1}^t - u_i^t \Vert_2^2 \big] \\
    &\hspace{1cm} + 2 \eta^t \cdot \E\Big[ \Big\langle \gamma_i^t, u_{i+1}^t - u \Big\rangle \Big] + 2 \eta^t \cdot \sum_{k=1}^3 \E\Big[ \Big\langle E_{i,k}^t, u_{i+1}^t - u \Big\rangle \Big] \\
    \leq \hspace{0.5mm} &\E\big[ \Vert u_i^t - u \Vert_2^2 \big] - \E\big[ \Vert u_{i+1}^t - u \Vert_2^2 \big] - \E\big[ \Vert u_{i+1}^t - u_i^t \Vert_2^2 \big] \\
    &\hspace{1cm} + 2 \eta^t \cdot \E\Big[ \Big\langle F_{\sigma_i^t}(u_{i+1}^t) - F_{\sigma_i^t}(u_i^t), u_{i+1}^t - u \Big\rangle \Big] \\
    &\hspace{1cm} - 2 \eta^t \cdot \E\Big[ \Big\langle F_{\sigma_{i-1}^t}(u_i^t) - F_{\sigma_{i-1}^t}(u_{i-1}^t), u_i^t - u \Big\rangle \Big] \\
    &\hspace{1cm} + \eta^t \ell \cdot \E\big[ \Vert u_i^t - u_{i-1}^t \Vert_2^2 \big] + \eta^t \ell \cdot \E\big[  \Vert u_{i+1}^t - u_i^t \Vert_2^2 \big] \\
    &\hspace{1cm} + 6C_1 \cdot \left(\eta^t \queryRadius^t + (\eta^t)^2 \queryRadius^t + (\eta^t)^2 + \frac{(\eta^t)^2}{\queryRadius^t} + \frac{(\eta^t)^2}{(\queryRadius^t)^2} \right),
\end{align*}
In particular, since by assumption $\eta^t \leq 1/(2\ell)$ for each $t \in \{0, 1, \cdots, T-1\}$, then:
\begin{align*}
    &2 \eta^t \cdot \E\Big[ \Big\langle F_{\sigma_i^t}(u_{i+1}^t), u_{i+1}^t - u \Big\rangle \Big] \\
    \leq \hspace{0.5mm} &\E\big[ \Vert u_i^t - u \Vert_2^2 \big] - \E\big[ \Vert u_{i+1}^t - u \Vert_2^2 \big] - \frac{1}{2} \E\big[ \Vert u_{i+1}^t - u_i^t \Vert_2^2 \big] + \frac{1}{2} \E\big[ \Vert u_i^t - u_{i-1}^t \Vert_2^2 \big] \\
    &\hspace{1cm} + 2 \eta^t \cdot \E\Big[ \Big\langle F_{\sigma_i^t}(u_{i+1}^t) - F_{\sigma_i^t}(u_i^t), u_{i+1}^t - u \Big\rangle \Big] \\
    &\hspace{1cm} - 2 \eta^t \cdot \E\Big[ \Big\langle F_{\sigma_{i-1}^t}(u_i^t) - F_{\sigma_{i-1}^t}(u_{i-1}^t), u_i^t - u \Big\rangle \Big] \\
    &\hspace{1cm} + 6C_1 \cdot \left(\eta^t \queryRadius^t + (\eta^t)^2 \queryRadius^t + (\eta^t)^2 + \frac{(\eta^t)^2}{\queryRadius^t} + \frac{(\eta^t)^2}{(\queryRadius^t)^2} \right),
\end{align*}

\end{proof}

Finally, to bound the step size terms above, we require the following lemma, which follows from standard calculus arguments.

\begin{lemma} \label{Lemma: App, Geometric Sum Bound}
\begin{alignat*}{2}
    \sum_{t=1}^T t^{-\beta} &\geq \frac{1}{1-\beta} T^{1-\beta}, \hspace{1cm} &&\forall \hspace{0.5mm} \beta < 1, \\
    \sum_{t=1}^T t^{-(1+\beta)} &\leq \frac{1}{\beta} + 1, &&\forall \hspace{0.5mm} \beta > 0.
\end{alignat*}
\end{lemma}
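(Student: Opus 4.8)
The plan is to prove both bounds by the standard integral (Riemann-sum) comparison for $p$-series, exploiting the monotonicity of the map $t \mapsto t^{-\gamma}$. In each case I would sandwich the finite sum $\sum_{t=1}^T t^{-\gamma}$ between integrals of $s^{-\gamma}$ over appropriately shifted unit intervals $[t-1,t]$ or $[t,t+1]$, then evaluate the resulting telescoped integral in closed form via $\int s^{-\gamma}\,ds = \frac{s^{1-\gamma}}{1-\gamma}$ (for $\gamma \ne 1$). The only real choices are the direction of the per-interval comparison and which boundary interval to use, both of which are dictated by whether $s \mapsto s^{-\gamma}$ is increasing or decreasing.

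For the second inequality this is entirely routine. Here the exponent is $1+\beta > 1$, so $s \mapsto s^{-(1+\beta)}$ is strictly decreasing; I would peel off the $t=1$ term and bound the tail from above by an integral, $\sum_{t=1}^T t^{-(1+\beta)} = 1 + \sum_{t=2}^T t^{-(1+\beta)} \le 1 + \int_1^T s^{-(1+\beta)}\,ds$. Since $\int_1^T s^{-(1+\beta)}\,ds = \frac{1 - T^{-\beta}}{\beta} \le \frac{1}{\beta}$, this immediately yields the stated bound $\frac{1}{\beta} + 1$, for all $T$ and all $\beta > 0$.

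For the first inequality, with $\gamma = \beta < 1$, the key observation is that the claimed right-hand side is exactly $\frac{1}{1-\beta}T^{1-\beta} = \int_0^T s^{-\beta}\,ds$. When $\beta \le 0$ the integrand is non-decreasing, so $t^{-\beta} \ge \int_{t-1}^t s^{-\beta}\,ds$ for each $t$, and summing over $t = 1,\ldots,T$ telescopes the integrals into $\int_0^T s^{-\beta}\,ds$, giving the bound cleanly. When $\beta \in (0,1)$ the integrand is instead \emph{decreasing}, which flips the direction of that per-interval comparison; the analogous \emph{valid} lower bound then comes from the shifted interval, $t^{-\beta} \ge \int_t^{t+1} s^{-\beta}\,ds$, yielding $\sum_{t=1}^T t^{-\beta} \ge \int_1^{T+1} s^{-\beta}\,ds = \frac{(T+1)^{1-\beta} - 1}{1-\beta}$.

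The main obstacle is precisely this decreasing regime $\beta \in (0,1)$, which is the one relevant to the step-size exponent $\beta = 3/4 - \chi$ used in Theorem \ref{Thm: OGDA Convergence}. Reconciling the shifted, boundary-corrected bound $\frac{(T+1)^{1-\beta} - 1}{1-\beta}$ with the clean target $\frac{T^{1-\beta}}{1-\beta}$ requires careful endpoint bookkeeping (the $(T+1)$-versus-$T$ shift and the subtracted constant), and I would expect this to be the delicate part of the argument, since the unbounded behavior of $s^{-\beta}$ near $0$ prevents simply comparing against $\int_0^T$ in the decreasing case. If the clean constant cannot be recovered exactly in this regime, the natural fallback is to carry the weaker but uniformly valid lower bound above, which suffices for the convergence argument up to the absorbed constants.
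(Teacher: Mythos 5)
Your integral-comparison strategy is exactly the ``standard calculus argument'' the paper gestures at --- it supplies no proof of this lemma at all --- and your treatment of the second inequality is complete and correct: peeling off the $t=1$ term and bounding the tail by $\int_1^T s^{-(1+\beta)}\,ds \le 1/\beta$ gives the claim for every $T$ and every $\beta>0$.

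On the first inequality, your hesitation in the regime $\beta\in(0,1)$ is not excessive caution: the statement is in fact \emph{false} there as written, and your analysis essentially uncovers this. Take $T=1$ and $\beta=1/2$: the left side is $1$ while the right side is $\tfrac{1}{1-1/2}\cdot 1^{1/2}=2$. The failure is not confined to small $T$ either, since by Euler--Maclaurin $\sum_{t=1}^T t^{-\beta}=\tfrac{T^{1-\beta}}{1-\beta}+\zeta(\beta)+O(T^{-\beta})$ with $\zeta(\beta)<0$ for $\beta\in(0,1)$, so the sum sits strictly \emph{below} $\tfrac{1}{1-\beta}T^{1-\beta}$ for all $T$; indeed the monotonicity of $s\mapsto s^{-\beta}$ gives the reverse comparison $\sum_{t=1}^T t^{-\beta}\le\int_0^T s^{-\beta}\,ds=\tfrac{1}{1-\beta}T^{1-\beta}$. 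The correct lower bound is the shifted one you wrote, $\sum_{t=1}^T t^{-\beta}\ge\int_1^{T+1}s^{-\beta}\,ds=\tfrac{(T+1)^{1-\beta}-1}{1-\beta}$, and the endpoint discrepancy cannot be reconciled because concavity of $s\mapsto s^{1-\beta}$ forces $(T+1)^{1-\beta}-T^{1-\beta}\le(1-\beta)T^{-\beta}<1$. Since the regime $\beta=3/4-\chi\in(0,1)$ is precisely the one invoked in the proof of Theorem \ref{Thm: OGDA Convergence}, the lemma (and the line $\sum_{t=0}^{T-1}\eta^t\ge 4\eta^0 T^{1/4-\chi}$ that uses it) should be restated with your corrected bound, or with a constant-factor version such as $\sum_{t=1}^T t^{-\beta}\ge\tfrac{1}{2(1-\beta)}T^{1-\beta}$ valid once $T^{1-\beta}\ge 2$. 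This changes only absolute constants, not the $T^{-1/4+\chi}$ rate, so your proposed fallback is the right fix.
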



\subsection{Proof of Theorem \ref{Thm: OGDA Convergence}}
\label{subsec: App, Proof of Theorem 4.1}

\begin{proof}(\textbf{Proof of Theorem \ref{Thm: OGDA Convergence}})
By applying Lemma \ref{Lemma: App, Inner Product of F} (note that $\eta^t \leq \eta^0 \leq \frac{1}{2\ell}$, for each $t \in \{0, 1, \cdots, T-1\}$) and using convex-concave nature of \(L_r\) (refer Proposition 1 in \cite{MokhtariOzdaglar2020ConvergenceRO}), for each $r \in \{1, \cdots n\}$, we have:
\begin{align} \nonumber
    &2 \eta^t \cdot \E\big[ L_{\sigma_i^t}(\x_{i+1}^t, y^\star) - L_{\sigma_i^t}(\x^\star, \y_{i+1}^t) \big] \\ \nonumber
    \leq \hspace{0.5mm} &2 \eta^t \cdot \E\big[ \big\langle F_{\sigma_i^t}(u_{i+1}^t), u_{i+1}^t - u^\star \big\rangle \big] \\ \nonumber
    \leq \hspace{0.5mm} &\E\big[ \Vert u_i^t - u^\star \Vert_2^2 \big] - \E\big[ \Vert u_{i+1}^t - u^\star \Vert_2^2 \big] - \frac{1}{2} \E\big[ \Vert u_{i+1}^t - u_i^t \Vert_2^2 \big] + \frac{1}{2} \E\big[ \Vert u_i^t - u_{i-1}^t \Vert_2^2 \big] \\ \nonumber
    &\hspace{1cm} + 2 \eta^t \cdot \E\Big[ \Big\langle F_{\sigma_i^t}(u_{i+1}^t) - F_{\sigma_i^t}(u_i^t), u_{i+1}^t - u^\star \Big\rangle \Big] \\ \nonumber
    &\hspace{1cm} - 2 \eta^t \cdot \E\Big[ \Big\langle F_{\sigma_{i-1}^t}(u_i^t) - F_{\sigma_{i-1}^t}(u_{i-1}^t), u_i^t - u^\star \Big\rangle \Big] \\ \label{Eqn: App, OGDA Thm proof, 1}
    &\hspace{1cm} + 6C_1 \cdot \left(\eta^t \queryRadius^t + (\eta^t)^2 \queryRadius^t + (\eta^t)^2 + \frac{(\eta^t)^2}{\queryRadius^t} + \frac{(\eta^t)^2}{(\queryRadius^t)^2} \right).
\end{align}
Meanwhile, Lemma \ref{Lemma: App, Gap Difference}, Proposition \ref{Prop: Kantorovich Duality} (Kantorovich Duality), and Lemma \ref{Lemma: App, u and u tilde bound} imply that:
\begin{align*}
    &\Big| \E[\Delta(u_{i+1}^t)] - \E\big[ L_{\sigma_i^t}(\x_{i+1}^t, y^\star) - L_{\sigma_i^t}(\x^\star, y_{i+1}^t) \big] \Big| \leq \frac{G}{n} \sum_{r=1}^n \W_2\big( \D_{i+1,t}, \D_{i+1,t}^r \big) \\
    &\hspace{1cm} \leq \frac{G}{n} \sum_{r=1}^n \sqrt{\E\big[ \big\Vert u_{i+1}^t(\sigma^t) - u_{i+1}^t(\tilde\sigma^t) \big\Vert_2^2 \big]} \\
    &\hspace{1cm} \leq G \cdot \Bigg( (6nd + 14n + 4) G \cdot \eta^t + 6n d M_L \cdot \frac{\eta^t}{\queryRadius^t} \Bigg).
\end{align*}

Substituting back into \eqref{Eqn: App, OGDA Thm proof, 1}, we have:
\begin{align} \nonumber
    &2 \eta^t \cdot \E\big[ \Delta(u_i^t) \big] \\ \nonumber
    \leq \hspace{0.5mm} &2 \eta^t \cdot \E\big[ L_{\sigma_i^t}(\x_{i+1}^t, y^\star) - L_{\sigma_i^t}(\x^\star, \y_{i+1}^t) \big] \\ \nonumber
    &\hspace{1cm} +  G \cdot \Bigg( (12nd + 28n + 8) G \cdot (\eta^t)^2 + 12n d M_L \cdot \frac{(\eta^t)^2}{\queryRadius^t} \Bigg) \\ \nonumber
    \leq \hspace{0.5mm} &\E\big[ \Vert u_i^t - u^\star \Vert_2^2 \big] - \E\big[ \Vert u_{i+1}^t - u^\star \Vert_2^2 \big] - \frac{1}{2} \E\big[ \Vert u_{i+1}^t - u_i^t \Vert_2^2 \big] + \frac{1}{2} \E\big[ \Vert u_i^t - u_{i-1}^t \Vert_2^2 \big] \\ \nonumber
    &\hspace{1cm} + 2 \eta^t \cdot \E\Big[ \Big\langle F_{\sigma_i^t}(u_{i+1}^t) - F_{\sigma_i^t}(u_i^t), u_{i+1}^t - u \Big\rangle \Big] \\ \nonumber
    &\hspace{1cm} - 2 \eta^t \cdot \E\Big[ \Big\langle F_{\sigma_{i-1}^t}(u_i^t) - F_{\sigma_{i-1}^t}(u_{i-1}^t), u_i^t - u \Big\rangle \Big] \\ \nonumber 
    &\hspace{1cm} + 6C_1 \cdot \left(\eta^t \queryRadius^t + (\eta^t)^2 \queryRadius^t + (\eta^t)^2 + \frac{(\eta^t)^2}{\queryRadius^t} + \frac{(\eta^t)^2}{(\queryRadius^t)^2} \right) \\ \label{Eqn: App, OGDA Thm proof, 2}
    &\hspace{1cm} +  G \cdot \Bigg( (12nd + 28n + 8) G \cdot (\eta^t)^2 + 12n d M_L \cdot \frac{(\eta^t)^2}{\queryRadius^t} \Bigg).
\end{align}
We can now sum the above telescoping terms across the $t$-th epoch, as shown below:
\begin{align} \nonumber
    &2 \cdot \sum_{i=1}^n \eta^t \cdot \E\big[ \Delta(u_i^t) \big] \\ \nonumber
    \leq \hspace{0.5mm} &\E\big[ \Vert u_1^t - u^\star \Vert_2^2 \big] - \E\big[ \Vert u_1^{t+1} - u^\star \Vert_2^2 \big] + \frac{1}{2} \E\big[ \Vert u_1^t - u_0^t \Vert_2^2 \big] - \frac{1}{2} \E\big[ \Vert u_1^{t+1} - u_0^{t+1} \Vert_2^2 \big] \\ \nonumber
    &\hspace{1cm} + 2 \eta^t \cdot \E\Big[ \Big\langle F_{\sigma_0^t}(u_1^t) - F_{\sigma_0^t}(u_0^t), u_1^t - u^\star \Big\rangle \Big] \\ \nonumber
    &\hspace{1cm} - 2 \eta^t \cdot \E\Big[ \Big\langle F_{\sigma_0^{t+1}}(u_1^{t+1}) - F_{\sigma_0^{t+1}}(u_0^{t+1}), u_1^{t+1} - u^\star \Big\rangle \Big] \\ \nonumber 
    &\hspace{1cm} + 6nC_1 \cdot \left(\eta^t \queryRadius^t + (\eta^t)^2 \queryRadius^t + (\eta^t)^2 + \frac{(\eta^t)^2}{\queryRadius^t} + \frac{(\eta^t)^2}{(\queryRadius^t)^2} \right) \\ \nonumber
    &\hspace{1cm} +  nG \cdot \Bigg( (12nd + 28n + 8) G \cdot (\eta^t)^2 + 12n d M_L \cdot \frac{(\eta^t)^2}{\queryRadius^t} \Bigg).
\end{align}
Meanwhile, we have for each $t = 0, 1, \cdots, T-1$, $i \in [n]$:
\begin{align*}
    &\E\Big[ \Big\langle F_{\sigma_i^t}(u_{i+1}^t) - F_{\sigma_i^t}(u_i^t), u_{i+1}^t - u^\star \Big\rangle \Big] \\
    \leq \hspace{0.5mm} &\E\big[ \big\Vert F_{\sigma_i^t}(u_{i+1}^t) - F_{\sigma_i^t}(u_i^t) \big\Vert \cdot \big\Vert u_{i+1}^t - u^\star \big\Vert \Big] \\
    = \hspace{0.5mm} &\ell \cdot \E\big[ \Vert u_{i+1}^t - u_i^t \Vert \big] \cdot D \\
    \leq \hspace{0.5mm} &\ell D \cdot \E\Big[ \big\Vert - \eta^t \hat{F}_{\sigma_i^t}(u_{i}^t; \queryRadius^t,v_{i}^t)-\eta^t \hat{F}_{\sigma_{i-1}^t}(u_{i}^t;\queryRadius^t,v_{i}^t) + \eta^t\hat{F}_{\sigma_{i-1}^t}(u_{i-1}^t;\queryRadius^t,v_{i-1}^t) \big\Vert \Big] \\
    \leq \hspace{0.5mm} &3\ell D \cdot \eta^t \cdot \Bigg( dG + \frac{dM_L}{\queryRadius^t} \Bigg) \\
    = \hspace{0.5mm} &3 \ell D dG \cdot \eta^t + 3 \ell D d M_L \cdot \frac{\eta^t}{\queryRadius^t},
\end{align*}
where the final inequality follows from Proposition \ref{Prop: App, ZO grad mean, variance}, \eqref{Eqn: Prop, ZO grad, bound}. We can upper bound $\E\Big[ \Big\langle F_{\sigma_{i-1}^t}(u_i^t) - F_{\sigma_{i-1}^t}(u_{i-1}^t), u_i^t - u \Big\rangle \Big]$ in a similar fashion. Substituting back into \eqref{Eqn: App, OGDA Thm proof, 2}, we have:
\begin{align} \nonumber
    &2 \cdot \sum_{i=1}^n \eta^t \cdot \E\big[ \Delta(u_i^t) \big] \\ \nonumber
    \leq \hspace{0.5mm} &\E\big[ \Vert u_1^t - u^\star \Vert_2^2 \big] - \E\big[ \Vert u_1^{t+1} - u^\star \Vert_2^2 \big] + \frac{1}{2} \E\big[ \Vert u_1^t - u_0^t \Vert_2^2 \big] - \frac{1}{2} \E\big[ \Vert u_1^{t+1} - u_0^{t+1} \Vert_2^2 \big] \\ \nonumber
    &\hspace{1cm} + 6nC_1 \cdot \left(\eta^t \queryRadius^t + (\eta^t)^2 \queryRadius^t + (\eta^t)^2 + \frac{(\eta^t)^2}{\queryRadius^t} + \frac{(\eta^t)^2}{(\queryRadius^t)^2} \right) \\ \nonumber
    &\hspace{1cm} + nG \cdot \Bigg( (12nd + 28n + 8) G \cdot (\eta^t)^2 + 12n d M_L \cdot \frac{(\eta^t)^2}{\queryRadius^t} \Bigg) \\ \nonumber
    &\hspace{1cm} + 6 \ell D dG \cdot (\eta^t)^2 + 6 \ell D d M_L \cdot \frac{(\eta^t)^2}{\queryRadius^t} \\
    \leq \hspace{0.5mm} &\E\big[ \Vert u_1^t - u^\star \Vert_2^2 \big] - \E\big[ \Vert u_1^{t+1} - u^\star \Vert_2^2 \big] + \frac{1}{2} \E\big[ \Vert u_1^t - u_0^t \Vert_2^2 \big] - \frac{1}{2} \E\big[ \Vert u_1^{t+1} - u_0^{t+1} \Vert_2^2 \big] \\ \nonumber
    &\hspace{1cm} + 2C \cdot \left(\eta^t \queryRadius^t + (\eta^t)^2 \queryRadius^t + (\eta^t)^2 + \frac{(\eta^t)^2}{\queryRadius^t} + \frac{(\eta^t)^2}{(\queryRadius^t)^2} \right),
\end{align}
where $C :=  \max\{3nC_1, (6nd + 14n + 4)nG, 6ndM_L, 3\ell D dG, 3\ell D dM_L \}$.

Finally, summing the above telescoping terms over $i \in [n]$ and $t \in \{0, 1, \cdots, T-1\}$, and removing non-positive terms, we obtain:
\begin{align} \nonumber
    &\frac{\sum_{t=0}^{T-1} \sum_{i=1}^n \eta^t \cdot \E\big[\Delta(u_i^t) \big]}{\sum_{t=0}^{T-1} \sum_{i=1}^n \eta^t} \\ \nonumber
    \leq \hspace{0.5mm} &\frac{1}{2 \cdot \sum_{t=0}^{T-1} \sum_{i=1}^n \eta^t} \Bigg( \Vert u_0^0 - u^\star \Vert_2 - \E\big[ \Vert u_n^{T-1} - u^\star \Vert_2 \big] + \frac{1}{2} \Vert u_1^0 - u_0^0 \Vert_2 - \frac{1}{2} \E\big[ \Vert u_n^{T-1} - u_{n-1}^{T-1} \Vert_2 \big] \Bigg) \\ \nonumber
    &\hspace{1cm} + C \cdot \frac{1}{\sum_{t=0}^{T-1} \sum_{i=1}^n \eta^t} \cdot \sum_{t=0}^{T-1} \left(\eta^t \queryRadius^t + (\eta^t)^2 \queryRadius^t + (\eta^t)^2 + \frac{(\eta^t)^2}{\queryRadius^t} + \frac{(\eta^t)^2}{(\queryRadius^t)^2} \right) \\ \label{Eqn: App, bound of w, Ineq, 3}
    \leq \hspace{0.5mm} &\frac{1}{\sum_{t=0}^{T-1} \eta^t} \cdot \frac{3D}{4n} + C \cdot \frac{1}{n\sum_{t=0}^{T-1} \eta^t} \cdot \sum_{t=0}^{T-1} \left(\eta^t \queryRadius^t + (\eta^t)^2 \queryRadius^t + (\eta^t)^2 + \frac{(\eta^t)^2}{\queryRadius^t} + \frac{(\eta^t)^2}{(\queryRadius^t)^2} \right),
\end{align}
By definition, $\eta^t = \eta^0 \cdot (t+1)^{-3/4-\chi}$ and $\queryRadius^t = \queryRadius^0 \cdot (t+1)^{-1/4}$, so by Lemma \ref{Lemma: App, Geometric Sum Bound}, we have:
\begin{align*}
    \sum_{t=0}^{T-1} \eta^t &= \eta^0 \cdot \sum_{t=1}^T t^{-3/4-\chi} \geq 4 \eta^0 \cdot T^{1/4 - \chi}, \\
    \sum_{t=0}^{T-1} \eta^t \queryRadius^t &= \eta^0 \queryRadius^0 \cdot \sum_{t=1}^T t^{-(1+\chi)} \leq \eta^0 \queryRadius^0 \cdot \Bigg( 1 + \frac{1}{\chi} \Bigg), \\
    \sum_{t=0}^{T-1} (\eta^t)^2 &= (\eta^0)^2 \cdot \sum_{t=1}^T t^{-3/2-2\chi} \leq (\eta^0)^2 \cdot \Bigg( 1 + \frac{1}{\frac{1}{2} + 2 \chi} \Bigg) \leq 3 \cdot (\eta^0)^2, \\
    \sum_{t=0}^{T-1} (\eta^t)^2 \queryRadius^t &= (\eta^0)^2 \queryRadius^0 \cdot \sum_{t=1}^T t^{-7/4-2\chi} \leq (\eta^0)^2 \queryRadius^0 \cdot \Bigg( 1 + \frac{1}{\frac{3}{4} + 2 \chi} \Bigg) \leq \frac{7}{4} \cdot (\eta^0)^2 \epsilon^0, \\
    \sum_{t=0}^{T-1} \frac{(\eta^t)^2}{\queryRadius^t} &= \frac{(\eta^0)^2}{\queryRadius^0} \cdot \sum_{t=1}^T t^{-5/4-2\chi} \leq \frac{(\eta^0)^2}{\queryRadius^0} \cdot \Bigg( 1 + \frac{1}{\frac{1}{4} + 2 \chi} \Bigg) \leq 5 \cdot \frac{(\eta^0)^2}{\epsilon^0}, \\
    \sum_{t=0}^{T-1} \frac{(\eta^t)^2}{(\queryRadius^t)^2} &= \frac{(\eta^0)^2}{(\queryRadius^0)^2} \cdot \sum_{t=1}^T t^{-1-2\chi} \leq \frac{(\eta^0)^2}{(\queryRadius^0)^2} \cdot \Bigg( 1 + \frac{1}{2 \chi} \Bigg).
\end{align*}
Substituting back into \eqref{Eqn: App, bound of w, Ineq, 3} and using the convexity of the gap function $\Delta(\cdot)$, we have:
\begin{align*}
    &\E\big[ \Delta(u^T) \big] \\
    \leq \hspace{0.5mm} &\frac{\sum_{t=0}^{T-1} \sum_{i=1}^n \eta^t \cdot \E\big[\Delta(u_i^t) \big]}{\sum_{t=0}^{T-1} \sum_{i=1}^n \eta^t} \\
    \leq \hspace{0.5mm} &\frac{1}{\sum_{t=0}^{T-1} \eta^t} \cdot \frac{3}{4n} D + C \cdot \frac{1}{\sum_{t=0}^{T-1} \eta^t} \cdot \sum_{t=0}^{T-1} \left(\eta^t \queryRadius^t + (\eta^t)^2 \queryRadius^t + (\eta^t)^2 + \frac{(\eta^t)^2}{\queryRadius^t} + \frac{(\eta^t)^2}{(\queryRadius^t)^2} \right) \\
    \leq \hspace{0.5mm} &\Bigg( \frac{3}{16n} D + \frac{47}{4n} \cdot C \max\left\{ \queryRadius^0, \eta^0, \eta^0 \queryRadius^0, \frac{\eta^0}{\queryRadius^0},  \frac{\eta^0}{(\queryRadius^0)^2} \right\} \Big( 1 + \frac{1}{\chi} \Big) \Bigg) T^{-1/4 + \chi} \\
    \leq \hspace{0.5mm} &\queryRadius.
\end{align*}
where the final inequality follows by definition of $T$.
\end{proof}


\newpage

\section{Wasserstein Distributionally Robust Strategic Classification}

\subsection{Model of Adversary}
\label{app: ModelOfAdversary}
In this subsection, we formally define our model for the adversary, and the uncertainty set of distributions for the resulting strategically and adversarially perturbed data. For better exposition, in this section we summarize the various distributions used in the main article in Table \ref{Table: Notation} below.

\begin{table}[h]
    \centering
    \caption{Table of notations}
    \begin{tabular}{||c|c||}
    \hline 
         Notation & Explanation\\ 
         \hline
         \hline
        \(\dataDist\) & Unknown underlying distribution \\
        \(\dataDist(\theta)\) & Unknown underlying distribution strategically perturbed by \(\theta\) \\ 
        \(\td{\dataDist}_n(\theta)\) & Empirical distribution of strategically perturbed data \\ 
        \(\mb{P}\) & An element of uncertainty set \(\mc{P}(\theta)\) \\ 
        \(\mb{P}_\theta^i\) & Conditional distribution of adversarially generated data given \(i^{th}\) data point
        \\
        \hline
        \end{tabular}
    \label{Table: Notation}
\end{table}

The WDRSL problem formulation contains two main components---the \emph{strategic component} that accounts for the distribution shift \(\dataDist(\theta)\) in response to the choice of classifier \(\theta\), and the \emph{adversarial component}, which accounts for the uncertainty set \(\mc{P}(\theta)\). As per the modeling assumptions put forth in Section \ref{ssec: ModelOfResponse}, we have \((\xData_i,\yData_i) \sim \dataDist\) and \((b_i(\theta,\xData_i,\yData_i),\yData_i)\sim \dataDist(\theta)\) for all \(i\in[n]\). 
For the sake of brevity, we shall use \(b_i(\theta)\) in place of \(b_i(\theta,\xData_i,\yData_i)\) for all \(i\in[n]\). 

As per the standard formulation of distributionally robust optimization, we restrict \(\mc{P}(\theta)\) to be a Wasserstein neighborhood of $\td{\dataDist}_n(\theta)$ (the empirical distribution of strategic responses \(\{(b_i(\theta),\yData_i)\}_{i=1}^{n}\)), i.e., we set \(\mc{P}(\theta) \subset \mb{B}_\delta(\td{\dataDist}_n(\theta))\) for some \(\delta>0\). However, to ensure that the min-max problem reformulated from the WDRSC problem is convex-concave, we further require the adversary to modify the \emph{label} of an data point $i$ in the empirical distribution only when the true label $\yData_i$ is \(+1\), although they are still always allowed to modify the \emph{feature} $b_i(\theta)$. As a consequence, this imposes some restrictions on the conditional distribution $\mathbb P_\theta^i$ of $(dx, y)$, as generated by the adversary, given a data point $i$ in the empirical distribution. In particular:
\begin{align*}
    \mathbb{P}^i_\theta(dx, +1|b_i(\theta), -1) = 0, \hspace{5mm} \forall \hspace{0.5mm} i \in [n].
\end{align*}

 By definition of conditional distributions, we obtain that any distribution \(\dist\) can be expressed as the average of the conditional distribution \(\dist_{\theta}^i\). That is, 
\begin{align*}
    \mathbb{P}(dx, y) &= \frac{1}{n} \sum_{i=1}^n  \mathbb{P}^i_\theta(dx, y|b_i(\theta), \tilde{y}_i).
\end{align*}
Below, we formally state the restriction described above.

\begin{assumption}
We assume that \(\dist \in \mb{B}_\delta(\td{\dataDist}_n(\theta))\) and \(\dist^{i}_\theta(dx,+1|b_i(\theta),-1) =0\) for all \(i\in[n]\). As a direct result, the uncertainty set $\mathcal{P}(\theta)$ is characterized as: 
\begin{align}\label{eq: ModelAdversary}
    \pSet(\param)=\mb{B}_{\delta}(\td{\dataDist}_{\numData}(\param)) \cap \left\{ \frac{1}{n} \sum_{i=1}^n \mathbb{P}^i_\theta(dx, y|b_i(\theta), \tilde{y}_i) \Bigg| \mathbb{P}^i_\theta(dx, +1|b_i(\theta), -1) = 0, \hspace{0.5mm} \forall \hspace{0.5mm} i \in [n] \right\}.
\end{align}
\end{assumption}


In the following subsection, we reformulate the WDRSC problem with a generalized linear model and with the uncertainty set defined in \eqref{eq: ModelAdversary}.

\subsection{Proof of Theorem \ref{thm: WDRSC-convex-concave}}
\label{app: appendixProofofConvexConcavity}

The proof takes inspirations from  \cite[Theorem 1]{ShafieezadehAbadeh2015DistributionallyRL}. First, we define the \textit{Wasserstein distance between distributions on $\mathcal{Z}$ with cost function $c$}; note that this is different from the \textit{$p$-Wasserstein distance between probability distributions on $\R^d$} defined in Appendix \ref{subsec: App, Lemmas for OGDA Theorem}.

\begin{definition}(\textbf{Wasserstein distance between distributions on $\mathcal{Z}$ with cost Function $c$}) \label{Def: Wasserstein Metric, with Cost}
Let $\mu, \nu$ be probability distributions over $\mathcal{Z} := \R^d \times \{+1, -1\}$ with finite second moments, and let $\Pi(\mu, \nu)$ denote the set of all couplings (joint distributions) between $\mu$ and $\nu$. Given a metric $c: \mathcal{Z} \times \mathcal{Z} \ra [0, \infty)$ on $\mathcal{Z}$, we define:
\begin{align*}
    \W_c(\mu, \nu) = \inf_{(Z, Z') \sim \pi \in \Pi(\mu, \nu)} \E_\pi \big[ c(Z, Z') \big].
\end{align*}
\end{definition}

In Theorem \ref{Eqn: WDRSC Problem} and in our proof below, we use the cost function $c(z, z') := \Vert x - x' \Vert_2^2 + \kappa \cdot |y - y'|$, with a fixed constant $\kappa > 0$, for each $z := (x, y) \in \mathcal{Z}$ and $z' := (x', y') \in \mathcal{Z}$.

\begin{proof}(\textbf{Proof of Theorem \ref{thm: WDRSC-convex-concave}})
Fix a \(\theta\in\Theta\). Note that \(b_i(\theta,\xData_i,+1) = \xData_i\). For any \((x,y)\in \R^d\times \{-1,1\}\), let \(\ell((x,y),\theta) \defas \phi(\lara{x,\theta}) - y\lara{x,\theta}\).   We first analyze the inner supremum term, i.e. 
\begin{align*}
    &\hspace{5mm} \sup_{\dist\in \mc{P}(\theta)} \mb{E}_{\dist}[\phi(\langle x,\theta\rangle)-y\langle x,\theta\rangle] \\
    &=  \sup_{\dist\in \mc{P}(\theta)}\int_{\mc{Z}}\ell(z,\theta)\dist(z) dz \\
   &=\left\{\begin{array}{cc}
         \sup\limits_{\pi_\theta\in \Pi(\dist,\td{\dataDist}_{n}(\theta))} \ & \int_{\mc{Z}}\ell(z,\theta)\pi_\theta(dz,\mc{Z})  \\
        \text{s.t.}\ &\ \int_{\mc{Z}\times \mc{Z}}\|z-\td{z}\|\pi_\theta(dz,d\td{z})\leq \delta
    \end{array}\right.
\end{align*}
Here, $\Pi(\mathbb{P}, \tilde{\dataDist}_n(\theta))$ denotes the set of all joint distributions that couple $\mathbb{P} \in \mathcal{P}(\theta)$ and $\tilde{\dataDist}_n(\theta)$.
Since the marginal distribution $\td{\dataDist}_{n}(\theta)$ of $\td{z}$ is discrete, such couplings $\pi_\theta$ are completely determined by the conditional distribution $\dist^i_{\theta}$ of $z$ given $\td{z}_i=(\xData_i(\theta),\yData_i)$ for each $i\in \{1,\ldots, n\}$. That is: 
\[
\pi_\theta(dz,d\td{z}) = \frac{1}{n}\sum_{i\in[n]} \vartheta_{(b_i(\theta),\yData_i)}(d\td{z}) \dist_{\theta}^i(dz)
\]
where for any \((x,y)\in\mc{Z}\), \(\vartheta_{(x,y)}\) is a \emph{Dirac delta} distribution with its support at point \((x,y)\). 

We introduce some notations. Let \(\mc{I}_{+1} = \{i\in[n]:\yData_{i}=+1\}\) and \(\mc{I}_{-1} = \{i\in[n]:\yData_{i}=-1\}\). Let's introduce two distributions \(\mu_{\theta}^i\) and \(\nu_{\theta}^{i}\) such that 
\[
\dist_{\theta}^i = \begin{cases}
\mu_{\theta}^{i} & \text{if} \ i \in \mc{I}_{+1} \\ 
\nu_{\theta}^{i} & \text{if}  \ i \in \mc{I}_{-1}
\end{cases}
\]
Due to the constraint \eqref{eq: ModelAdversary}, we have \(\nu_\theta^i(dx,+1)=0\) at every \(x\). This implies:
\[\pi_\theta(dz,d\td{z})=\frac{1}{n}\lr{\sum_{i\in\mc{I}_{+1}}\vartheta_{(b_i(\theta),1)}(d\td{z})\mu^i_\theta(dz) + \sum_{i\in\mc{I}_{-1}}\vartheta_{(b_i(\theta),-1)}(d\td{z})  \nu_{\theta}^i(dz)}
\]
With a slight abuse of notation, we denote \(\mu_{\theta,+1}^i(dx) = \mu_{\theta}^i(dx,+1)\), \(\mu_{\theta,-1}^i(dx) = \mu_{\theta}^i(dx,-1)\) and \(\nu_\theta^i(dx) = \nu_\theta^i(dx,-1)\). 
The optimization problem of concern then simplifies to:
\begin{align*}
    \sup_{\mu_{\theta,\pm 1}^i,\nu_{\theta}^i}\ &\ \frac{1}{n}\sum_{i\in\mc{I}_{+1} } \int_{\mb{R}^d} \ell((x,+1),\theta)\mu_{\theta,+1}^i(dx)+\frac{1}{n}\sum_{i\in\mc{I}_{+1}} \int_{\mb{R}^d} \ell((x,-1),\theta)\mu_{\theta,-1}^i(dx) \\
    &\hspace{1cm} +  \frac{1}{n}\sum_{i\in\mc{I}_{-1}}\int_{\mb{R}^d} \ell((x,-1),\theta)\nu_{\theta}^i(dx) \\
    \text{s.t.}\ & \ \frac{1}{n}\sum_{i:\yData_i=+1}\int_{\mb{R}^d}\|(x,+1)-(b_i(\theta),\yData_i)\|\mu_{\theta,+1}^i(dx)\\
    &\hspace{1cm} +\frac{1}{n}\sum_{i:\yData_i=+1}\int_{\mb{R}^d}\|(x,-1)-(b_i(\theta),\yData_i)\|\mu_{\theta,-1}^i(dx) \\
  &\ \int_{\mb{R}^d}\mu_{\theta,+1}^i(dx)+\int_{\mb{R}^d}\mu_{\theta,-1}^i(dx)=1, \quad \forall \quad i \in \mc{I}_{+1} \\ 
    &\ \int_{\R^d}\nu_\theta^i(dx)=1 , \quad \forall \quad i\in\mc{I}_{-1}
\end{align*}
First, we rewrite the inequality constraint above as follows. Recall that:
\begin{align*}
    &\frac{2\kappa}{n}\int_{\mb{R}^d}\sum_{i\in\mc{I}_{+1}}\mu_{\theta,-1}^i(dx) + \frac{1}{n}\int_{\mb{R}^d}\sum_{i\in\mc{I}_{+1}} \|x-b_i(\theta)\|\mu_{\theta,+1}^i(dx) \\
    &\hspace{1cm} + \frac{1}{n}\int_{\mb{R}^d}\sum_{i\in\mc{I}_{+1}}  \|x-b_i(\theta)\|\mu_{\theta,-1}^i(dx) + \frac{1}{n}\int_{\mb{R}^d}\sum_{i\in\mc{I}_{-1}} \|x-b_i(\theta)\|\nu_{\theta}^i (dx)\leq \delta.
\end{align*}
Hence,
\begin{align*}
    \sup_{\mu_{\theta, \pm 1}^i,\nu_{\theta}^i}\ &\ \frac{1}{n}\sum_{i\in\mc{I}_{+1}} \int_{\mb{R}^d} \ell((x,+1),\theta)\mu_{\theta,+1}^i(dx)+\frac{1}{n}\sum_{i\in\mc{I}_{+1}} \int_{\mb{R}^d} \ell((x,-1),\theta)\mu_{\theta,-1}^i(dx) \\
    &\hspace{1cm} +
     \frac{1}{n}\sum_{\yData_i=-1}\int_{\mb{R}^d} \ell((x,-1),\theta)\nu_{\theta}^i(dx)\\
    \text{s.t.}\ &\frac{2\kappa}{n}\int_{\mb{R}^d}\sum_{i\in\mc{I}_{+1}}\mu_{\theta,-1}^i(dx) + \frac{1}{n}\int_{\mb{R}^d}\sum_{i\in\mc{I}_{+1}} \|x-b_i(\theta)\|\mu_{\theta,+1}^i(dx) \\
    &\hspace{1cm} + \frac{1}{n}\int_{\mb{R}^d}\sum_{i\in\mc{I}_{+1}}  \|x-b_i(\theta)\|\mu_{\theta,-1}^i(dx) + \frac{1}{n}\int_{\mb{R}^d}\sum_{i\in\mc{I}_{-1}} \|x-b_i(\theta)\|\nu_{\theta}^i (dx)\leq \delta\\
    &\ \int_{\mb{R}^d}\mu_{\theta,+1}^i(dx)+\int_{\mb{R}^d}\mu_{\theta,-1}^i(dx)=1, \quad \forall \quad i\in\mc{I}_{+1} \\ 
    &\ \int_{\R^d}\nu_\theta^i(dx)=1 , \quad \forall \quad i\in\mc{I}_{-1}
\end{align*}


Now, we can use duality to reformulate the infinite-dimensional optimization problem into a finite-dimensional problem:
\begin{align*}
    &\sup_{\dist\in \mc{P}(\theta)} \mb{E}_{\dist}[\phi(\langle x,\theta\rangle)-y\langle x,\theta\rangle] \\
    = \hspace{0.5mm} &\begin{cases}
    \inf_{\alpha,s_i} &\alpha \delta +\frac{1}{n}\sum_{i\in\mc{I}_{+1}}s_i+\frac{1}{n}\sum_{i\in\mc{I}_{-1}}t_i \\ 
    \text{s.t.} & \sup_{x}  \ell((x,+1),\theta) -\alpha \cdot \frac{1+\yData_i}{2}\|x-b_i(\theta)\|\leq s_i \quad \forall \ i\in\mc{I}_{+1}\\
    & \sup_{x}  \ell((x,-1),\theta) -\alpha \cdot \frac{1+\yData_i}{2}\|x-b_i(\theta)\|-\alpha\kappa (1+\yData_i)\leq s_i \quad \forall \ i\in\mc{I}_{+1}\\
    &\sup_{x}  \ell((x,-1),\theta) -\alpha \cdot \frac{1-\yData_i}{2}\|x-b_i(\theta)\|  \leq t_i \quad \forall \ i\in\mc{I}_{-1}\\
    &\alpha \geq 0
    \end{cases}
\end{align*}
which is equivalent to: 
\begin{align*}
    &\sup_{\dist\in \mc{P}(\theta)} \mb{E}_{\dist}[\phi(\langle x,\theta\rangle)-y\langle x,\theta\rangle] \\
    = \hspace{0.5mm} &\begin{cases}
    \inf_{\alpha,s_i} &\alpha \delta +\frac{1}{n}\sum_{i\in\mc{I}_{+1}}s_i+\frac{1}{n}\sum_{i\in\mc{I}_{-1}}t_i \\ 
    \text{s.t.} & \sup_{x}  \ell((x,+1),\theta) -{\alpha}\|x-b_i(\theta)\|\leq s_i \quad \forall \ i\in\mc{I}_{+1}\\
    & \sup_{x}  \ell((x,-1),\theta) -{\alpha}\|x-b_i(\theta)\|-2\alpha\kappa \leq s_i \quad \forall \ i\in\mc{I}_{+1}\\
    &\sup_{x}  \ell((x,-1),\theta) -{\alpha}\|x-b_i(\theta)\|  \leq t_i \quad \forall \ i\in\mc{I}_{-1} \\
    &\alpha \geq 0
    \end{cases}
\end{align*}
We now invoke \cite[Lemma A.1]{YuMazumdar2021FastDistributionallyRobustLearning}, which claims that for any \(\td{y}\in \{+1,-1\}\) and \(\td{x}\in \R^d\):
\begin{align*}
    \sup_x \ell((x,\td{y}),\theta) - \alpha \|x-\td{x}\|  = \begin{cases}
    \ell((\td{x},\td{y}),\theta) & \ \text{if} \ \|\theta\| \leq \alpha/(L+1) \\ 
    -\infty  &  \ \text{otherwise.}
    \end{cases}
\end{align*}

We now have:
\begin{align*}
    &\sup_{\dist\in \mc{P}(\theta)} \mb{E}_{\dist}[\phi(\langle x,\theta\rangle)-y\langle x,\theta\rangle] \\
    = \hspace{0.5mm} &\begin{cases}
    \inf_{\alpha,s_i} &\alpha \delta +\frac{1}{n}\sum_{i\in\mc{I}_{+1}}s_i+\frac{1}{n}\sum_{i\in\mc{I}_{-1}}t_i \\ 
    \text{s.t.} & 
    \ell((b_i(\theta),+1),\theta)\leq s_i \quad \forall \ i\in\mc{I}_{+1}\\
    & \ell((b_i(\theta),-1),\theta) -2\alpha\kappa \leq s_i \quad \forall \ i\in\mc{I}_{+1}\\
    &\ell((b_i(\theta),-1),\theta)  \leq t_i \quad \forall \ i\in\mc{I}_{-1}\\
    &\alpha \geq 0 \\ 
    &\|\theta\|\leq \alpha/(L+1)    
    \end{cases}
\end{align*}

In the above presented optimization problem we can conclude that:
\begin{align*}
    t_i &= \phi(\lara{b_i(\theta),\theta})+\lara{b_i(\theta),\theta} && \forall i\in\mc{I}_{-1} \\ s_i &= \max\{\ell((b_i(\theta),+1),\theta),  \ell((b_i(\theta),-1),\theta) -2\alpha\kappa\} && \forall i\in\mc{I}_{+1}.
\end{align*}
To further simplify the \(s_i\) expression, note that:
\begin{align*}
    s_i &= \max\{ \phi(\lara{b_i(\theta),\theta})- \lara{b_i(\theta),\theta},\phi(\lara{b_i(\theta),\theta})+ \lara{b_i(\theta),\theta}-2\alpha\kappa  \} \\
    &= \phi(\lara{b_i(\theta),\theta})- \lara{b_i(\theta),\theta} + \max\{ 0,2 \lara{b_i(\theta),\theta}-2\alpha\kappa  \} \\ 
    &= \phi(\lara{b_i(\theta),\theta})-\alpha\kappa+ \max_{\gamma_i:|\gamma_i|\leq 1}\gamma_i\lr{ \lara{b_i(\theta),\theta}-\alpha\kappa},
\end{align*}
so the overall objective can be written as:
\begin{align*}
    &\sup_{\dist\in \mc{P}(\theta)} \mb{E}_{\dist}[\phi(\langle x,\theta\rangle)-y\langle x,\theta\rangle] \\
    = \hspace{0.5mm} &\begin{cases}
    \inf_{\alpha} \max_{\gamma:\|\gamma\|_{\infty}\leq 1} &\alpha (\delta-\kappa) +\frac{1}{n}\sum_{i}\frac{1+\yData_i}{2}\lr{\phi(\lara{b_i(\theta),\theta})+\gamma_i \lr{\lara{b_i(\theta),\theta}-\alpha\kappa} }  \\
    &\hspace{1cm} +\frac{1}{n}\sum_i\frac{1-\yData_i}{2}\lr{ \phi(\lara{b_i(\theta),\theta})+\lara{b_i(\theta),\theta}} \\ 
    \text{s.t.} 
    &\|\theta\|\leq \alpha/(L+1)    
    \end{cases}
\end{align*}

We claim that the minimax objective above is convex is \(\theta\). There are mainly two cases to analyze:
\begin{enumerate}
    \item \textbf{Case I (\(i\in\mc{I}_{+1}\))}:  We have \(b_i(\theta) = \xData_i\) as per the strategic classification model. Therefore \(\lara{b_i(\theta),\theta}\) is a linear function.  For every \(\gamma,\alpha\), we claim that the mapping \(\theta \mapsto \phi(\lara{b_i(\theta),\theta})+\gamma_i (\lara{b_i(\theta),\theta}-\alpha\kappa)\) is convex. Indeed, the assumption that \(\phi\) is convex and the observation that \(\lara{b_i(\theta),\theta}\) is affine in \(\theta\) ensures the convexity. 
    \item \textbf{Case II (\(i\in\mc{I}_{-1}\))}: We know from Lemma \ref{lem: ConvexityBR} that \(\lara{b_i(\theta),\theta}\) is convex in \(\theta\). Moreover, the convexity of \(\phi\) and the assumption that \(z\mapsto \phi(z)+z\) is non-decreasing ensures that \(\phi(\lara{b_i(\theta),\theta})+\lara{b_i(\theta),\theta}\) is convex for every \(i\). 
\end{enumerate}
This concludes the proof.

\end{proof}

\newpage

\section{Details on experimental study}
\label{sec: App, ExpStudy}

{Code used to reproduce the results in the main paper is available at \url{https://drive.google.com/drive/folders/1spuB3R6vEU2AqaXxAxeeXo9z5QMVdtdl?usp=sharing}}


\subsection{Algorithms}
\label{subsec: App, AlgoExp}

In our experiments, we compare the OGDA-RR algorithm (Alg. \ref{Alg: OGDA-RR}) with three other zeroth-order algorithms---Optimistic Gradient Descent Ascent with Sampling with Replacement (OGDA-WR), Stochastic Gradient Descent Ascent with Random Reshuffling (SGDA-RR), and Stochastic Gradient Descent Ascent with Sampling with Replacement (SGDA-WR)---characterized by the update equations \eqref{Eqn: OGDA-WR, Update}, \eqref{Eqn: SGDA-RR, Update}, \eqref{Eqn: SGDA-WR, Update}, respectively. For convenience, we have reproduced \eqref{Eqn: OGDA-RR, Update}, the update equation for the OGDA-RR algorithm (Algorithm \ref{Alg: OGDA-RR}), as \eqref{Eqn: OGDA-RR, Update, Copy} below:
\begin{align} \label{Eqn: OGDA-RR, Update, Copy}
    u_{i+1}^t &= \proj_{\X \times \Y} \Big(u_{i}^t - \eta^t \hat{F}_{\sigma_i^t}(u_{i}^t; \queryRadius^t,v_{i}^t)-\eta^t \hat{F}_{\sigma_{i-1}^t}(u_{i}^t;\queryRadius^t,v_{i}^t) + \eta^t\hat{F}_{\sigma_{i-1}^t}(u_{i-1}^t;\queryRadius^t,v_{i-1}^t) \Big), \\
    \label{Eqn: OGDA-WR, Update}
    u_{i+1}^t &= \proj_{\X \times \Y} \Big(u_{i}^t - \eta^t \hat{F}_{j_i^t}(u_{i}^t; \queryRadius^t,v_{i}^t)-\eta^t \hat{F}_{j_{i-1}^t}(u_{i}^t;\queryRadius^t,v_{i}^t) + \eta^t\hat{F}_{j_{i-1}^t}(u_{i-1}^t;\queryRadius^t,v_{i-1}^t) \Big), \\ \label{Eqn: SGDA-RR, Update}
    u_{i+1}^t &= \proj_{\X \times \Y} \Big(u_{i}^t - \eta^t \hat{F}_{\sigma_i^t}(u_{i}^t; \queryRadius^t,v_{i}^t) \Big), \\ \label{Eqn: SGDA-WR, Update}
    u_{i+1}^t &= \proj_{\X \times \Y} \Big(u_{i}^t - \eta^t \hat{F}_{j_i^t}(u_{i}^t; \queryRadius^t,v_{i}^t) \Big),
\end{align}
where the indices $\sigma_i^t$ and $j_i^t$ are as defined in Algorithms \ref{Alg: OGDA-WR}, \ref{Alg: SGDA-RR}, and \ref{Alg: SGDA-WR}.

\begin{algorithm}

\SetAlgoLined

 \textbf{Input}: stepsizes $\eta^t,\queryRadius^t$, data points \(\{(x_i,y_i)\}_{i=1}^{n}\sim\mathcal{D},u_0^{(0)}\), time horizon duration $T$;
 
 \For{$t=0,1, \cdots, T-1$}{

\For{$i=0,\ldots, n-1$}{

Sample $j_i^t \sim \Unif(\{1, \cdots, n\})$

Sample $v_i^t \sim  \Unif(\sphere^{d-1})$

$u_{i+1}^t \gets \eqref{Eqn: OGDA-WR, Update}$

}

$u_{0}^{(t+1)} \gets u_{n}^t$

$u_{-1}^{(t+1)} \gets u_{n-1}^t$

 }
 
\textbf{Output:} $\tilde u^T := \frac{1}{n \cdot \sum_{t=0}^{T-1} \eta^t} \sum_{t=0}^{T-1} \sum_{i=1}^n \eta^t u_i^t$.
 
\caption{OGDA-WR Algorithm}
\label{Alg: OGDA-WR}
\end{algorithm}

\begin{algorithm}

\SetAlgoLined

 \textbf{Input}: stepsizes $\eta^t,\queryRadius^t$, data points \(\{(x_i,y_i)\}_{i=1}^{n}\sim\mathcal{D},u_0^{(0)}\), time horizon duration $T$;
 
 \For{$t=0,1, \cdots, T-1$}{

\For{$i=0,\ldots, n-1$}{

Sample $j_i^t \sim \Unif(\{1, \cdots, n\})$

Sample $v_i^t \sim  \Unif(\sphere^{d-1})$

$u_{i+1}^t \gets \eqref{Eqn: SGDA-RR, Update}$


}

$u_{0}^{(t+1)} \gets u_{n}^t$

$u_{-1}^{(t+1)} \gets u_{n-1}^t$

 }
 
\textbf{Output:} $\tilde u^T := \frac{1}{n \cdot \sum_{t=0}^{T-1} \eta^t} \sum_{t=0}^{T-1} \sum_{i=1}^n \eta^t u_i^t$.
 
\caption{SGDA-RR Algorithm}
\label{Alg: SGDA-RR}
\end{algorithm}

\begin{algorithm}

\SetAlgoLined

 \textbf{Input}: stepsizes $\eta^t,\queryRadius^t$, data points \(\{(x_i,y_i)\}_{i=1}^{n}\sim\mathcal{D},u_0^{(0)}\), time horizon duration $T$;
 
 \For{$t=0,1, \cdots, T-1$}{
$\sigma^t = (\sigma_1^t, \cdots, \sigma_n^t) \gets$ a random permutation of set \([n]\)\;

\For{$i=0,\ldots, n-1$}{
Sample $v_i^t \sim  \Unif(\sphere^{d-1})$

$u_{i+1}^t \gets \eqref{Eqn: SGDA-WR, Update}$

}

$u_{0}^{(t+1)} \gets u_{n}^t$

$u_{-1}^{(t+1)} \gets u_{n-1}^t$

 }
 
\textbf{Output:} $\tilde u^T := \frac{1}{n \cdot \sum_{t=0}^{T-1} \eta^t} \sum_{t=0}^{T-1} \sum_{i=1}^n \eta^t u_i^t$.
 
\caption{SGDA-WR Algorithm}
\label{Alg: SGDA-WR}
\end{algorithm}

\subsection{Additional Experimental Results}
\label{subsec: Additional Experimental Results}

In this section, we present more experimental findings, on both synthetic and real-world datasets, that reinforces the utility of the proposed algorithm. In all experimental results throughout this subsection, we take \(\delta=0.4\), \(\kappa = 0.5\) and \(\zeta = 0.05\).

\subsubsection{Experimental Study On Synthetic Datasets}
\label{subsubsec: Experimental study on synthetic datasets}

Figure \ref{fig:4000 Synthetic} compares the performance of \ref{OGDARR}-\ref{SGDAS} on a synthetic dataset (whose generating process is the same as that described in Section \ref{sec: ExpResults}), with 4000 training points and 800 test points. Our proposed algorithm performs better empirically compared to most of its counterparts. Moreover, the proposed classifier, \ref{OGDARR}, is significantly more robust than a classifier obtained without considering adversarial perturbations. Note, however, that we cannot make any conclusive claims yet, because of the inherent randomness in these algorithms. Indeed, even if we fix the initialization, then there are two sources of randomness---the construction of the zeroth-order gradient estimator, and the sampling process that generates the data points.

To illustrate the variability in these algorithms' performance, we run each algorithm repeatedly on a data set with 500 synthetically generated data points, using the same initialization, and present confidence interval plots with $\pm 2$ standard deviations for the resulting performance (Figure \ref{fig:error bars}). On average, our proposed algorithm \ref{OGDARR}  outperforms the other algorithms \ref{OGDAS}-\ref{SGDAS}. It is also interesting to point out that the performance of algorithms with random reshuffling is generally higher, and fluctuate less, compared to the performance of algorithms without random reshuffling. 

\begin{figure}
    \centering
    \includegraphics[scale=0.6]{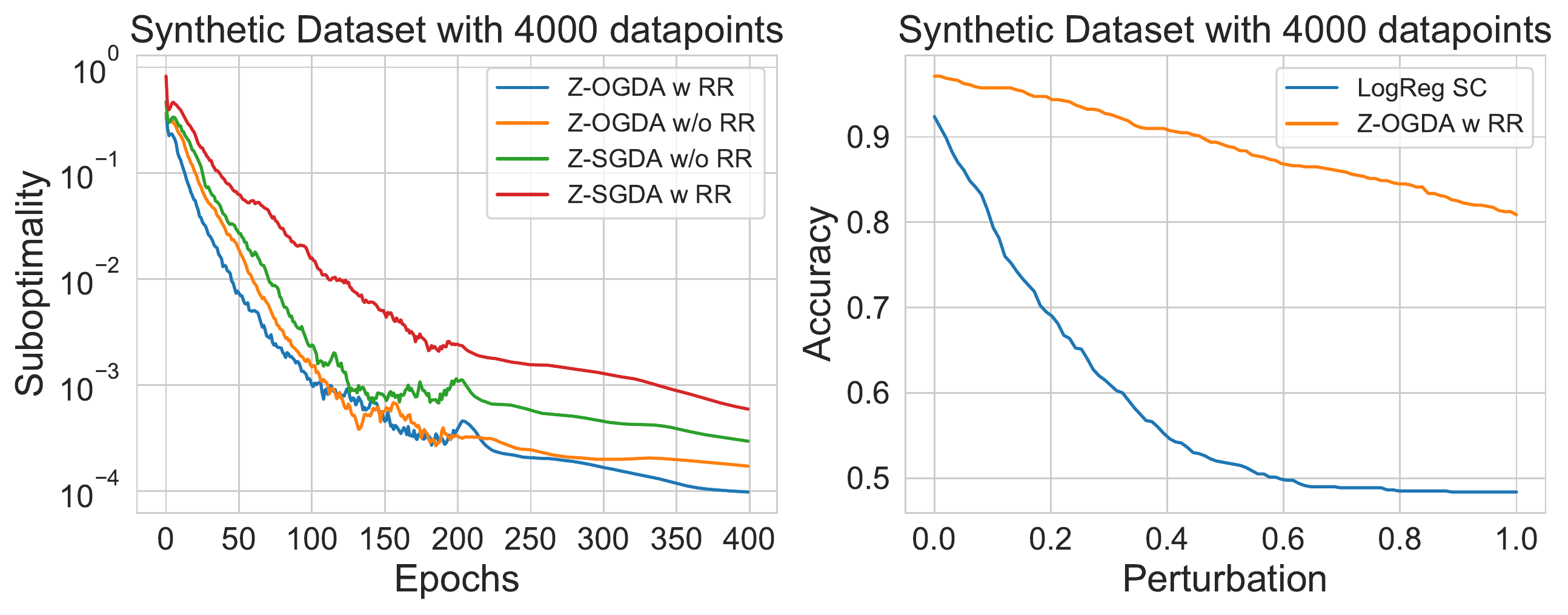}
    \caption{Experimental results for a synthetic dataset with \(n=4000\). (Left pane)) Suboptimality iterates generated by the four algorithms \ref{OGDARR}, \ref{OGDAS}, \ref{SGDARR}, \ref{SGDAS}, respectively denoted as \emph{Z-OGDA w RR}, \emph{Z-OGDA w/o RR}, \emph{Z-SGDA w RR}, \emph{Z-SGDA w/o RR}. (Right pane ) Comparison between decay in accuracy of strategic classification with logistic regression (trained with \(\zeta=0.05\)) and Alg. \ref{OGDARR} with changes in perturbation.}
    \label{fig:4000 Synthetic}
\end{figure}

\begin{figure}
    \centering
    \includegraphics[scale=0.5]{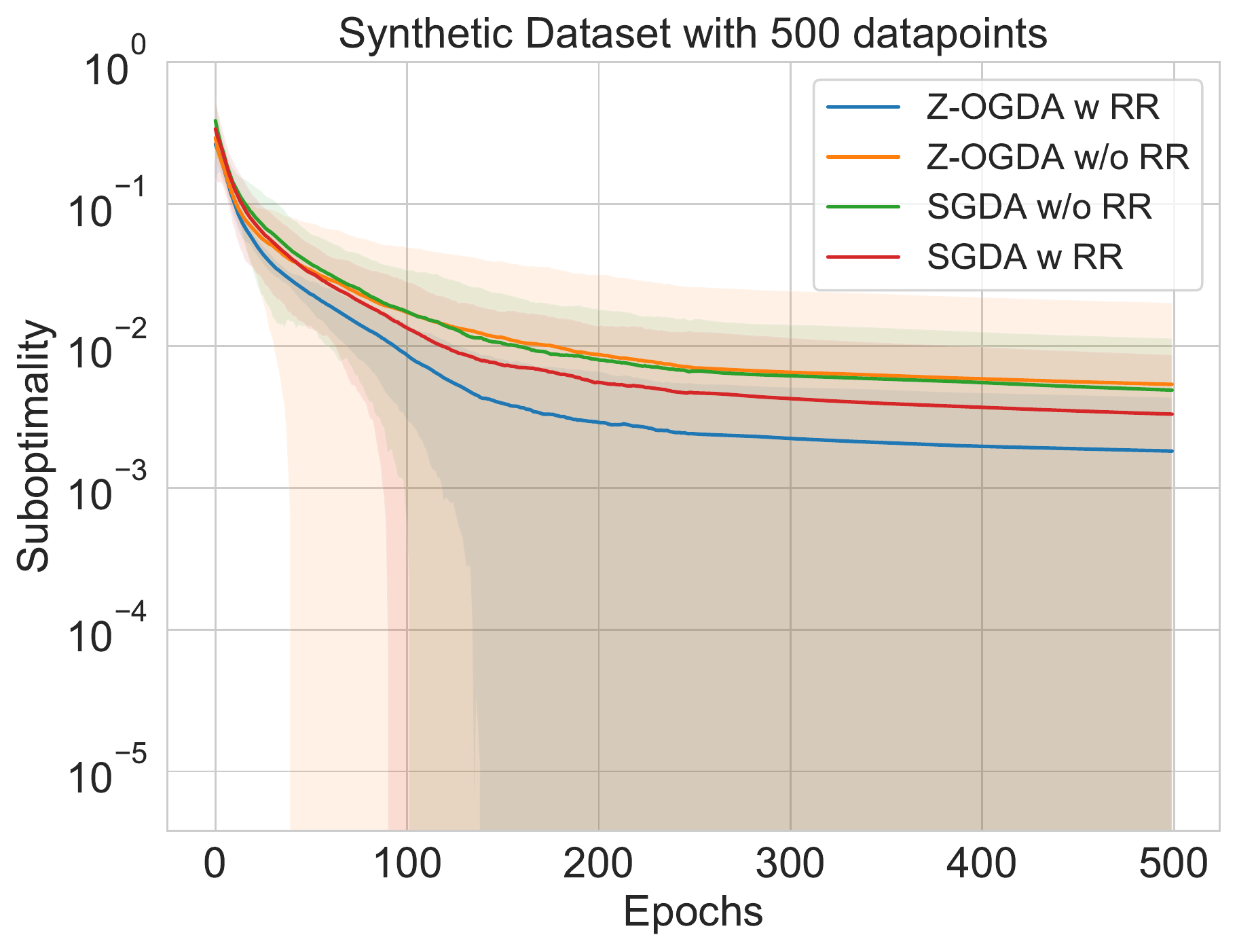}
    \caption{Experimental results for a synthetic dataset with \(n=500\).  Suboptimality iterates generated by the four algorithms \ref{OGDARR}, \ref{OGDAS}, \ref{SGDARR}, and \ref{SGDAS} are respectively denoted as \emph{Z-OGDA w RR}, \emph{Z-OGDA w/o RR}, \emph{Z-SGDA w RR}, and \emph{Z-SGDA w/o RR}.}
    \label{fig:error bars}
\end{figure}

We now illustrate the performance of our algorithm on two real-world data sets---the \say{GiveMeSomeCredit} dataset \footnote{This dataset can be found at \url{https://www.kaggle.com/c/GiveMeSomeCredit}}, and the \say{Porto Bank} data set\footnote{This dataset can be found at \url{https://archive.ics.uci.edu/ml/datasets/bank+marketing}}.

\subsubsection{Experimental Study on Credit Dataset}
\label{subsubsec: Experimental Study on Credit Dataset}

In modern times, banks use machine learning to determine whether or not to finance a customer. This process can be encoded into a classification framework, by using features such as age, debt ratio, monthly income to classify a customer as either likely or unlikely to default. However, those algorithms generally do not account for strategic or adversarial behavior on the part of the agents. 

To illustrate the effect of our algorithm on datasets of practical significance, we deploy our algorithms on the \say{GiveMeSomeCredit}(GMSC) dataset, while assuming that the underlying features are subject to strategic or adversarial perturbations. We use a subset of the dataset of size 2000 with balanced labels. In Figure \ref{fig:GMSC}, we compare the empirical performance of our algorithm \ref{OGDARR} with that of \ref{OGDAS}-\ref{SGDAS}. The left pane shows that \ref{OGDARR} performs well, and the right pane illustrates that our classifier is significantly more robust to adversarial perturbations in data, compared to the strategic classification-based logistic regression algorithm developed recently in the literature \cite{Dong2018StrategicClassification}.

\begin{figure}
    \centering
    \includegraphics[scale=0.6]{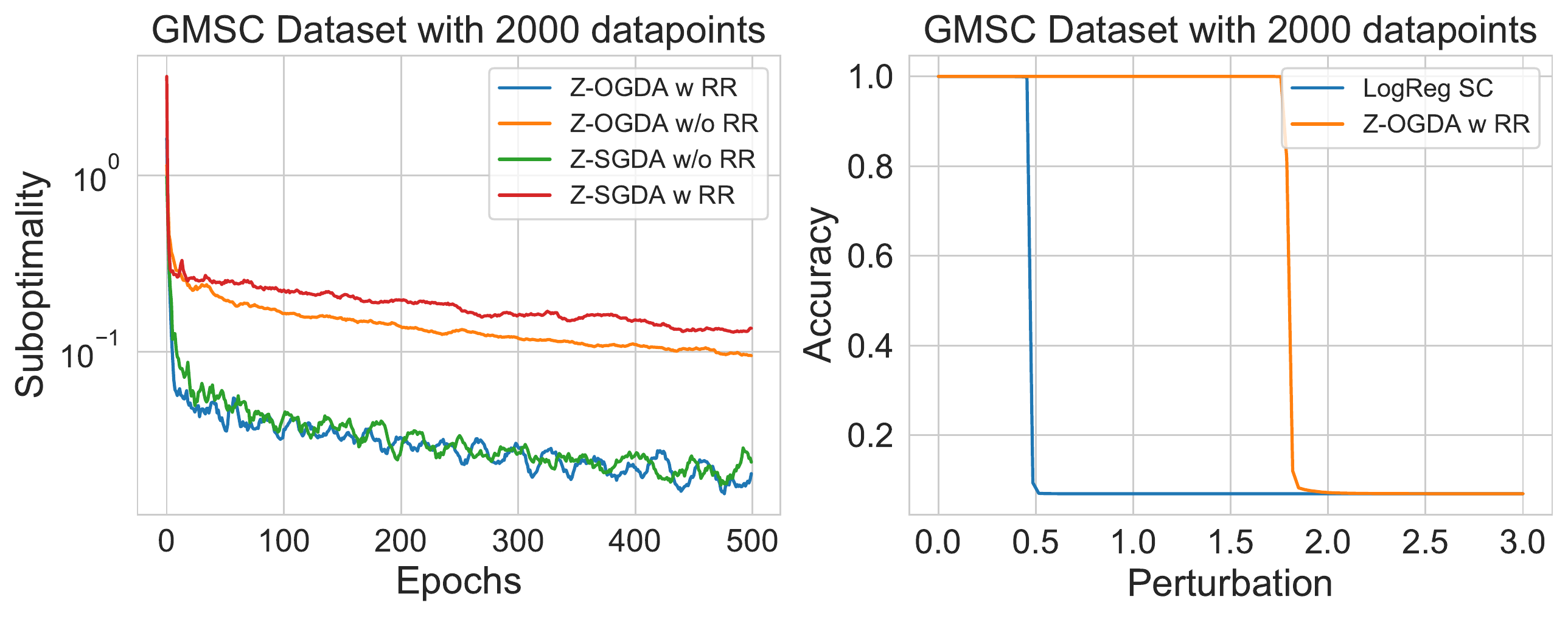}
    \caption{Experimental results for a balanced GiveMeSomeCredit dataset with \(n=2000\). (Left pane) Suboptimality iterates generated by the four algorithms \ref{OGDARR}, \ref{OGDAS}, \ref{SGDARR}, \ref{SGDAS}, respectively denoted as \emph{Z-OGDA w RR}, \emph{Z-OGDA w/o RR}, \emph{Z-SGDA w RR}, \emph{Z-SGDA w/o RR}. (Right pane) Comparison between decay in accuracy of strategic classification with logistic regression (originally trained with \(\zeta=0.05\)) and Alg. \ref{OGDARR} with changes in perturbation.}
    \label{fig:GMSC}
\end{figure}

\subsubsection{Experimental Study on Porto-Bank Dataset}
\label{subsubsec: Experimental Study on Porto-Bank Dataset}

Next, we present empirical results obtained by applying our algorithm to the \say{Porto-Bank} dataset, which describes marketing campaigns of term deposits at Portuguese financial institutions. The classification task in this scenario aims to predict whether a customer with given features (eg. age, job, marital status etc.) would enroll for term deposits. 

In Figure \ref{fig:Porto}, we present the performance of our proposed algorithm \ref{OGDARR} on the Porto-Bank dataset. For ease of illustration, we consider a subset of the dataset with 2000 training data points, 800 test data points, and balanced labels. In Figure \ref{fig:Porto}, we compare the empirical performance of our algorithm \ref{OGDARR} with that of \ref{OGDAS}-\ref{SGDAS}. The left pane shows that \ref{OGDARR} performs well, while the right pane illustrates that our classifier is significantly more robust to adversarial perturbations in data, compared to the strategic classification-based logistic regression developed recently in the literature \cite{Dong2018StrategicClassification}.

\begin{figure}
    \centering
    \includegraphics[scale=0.6]{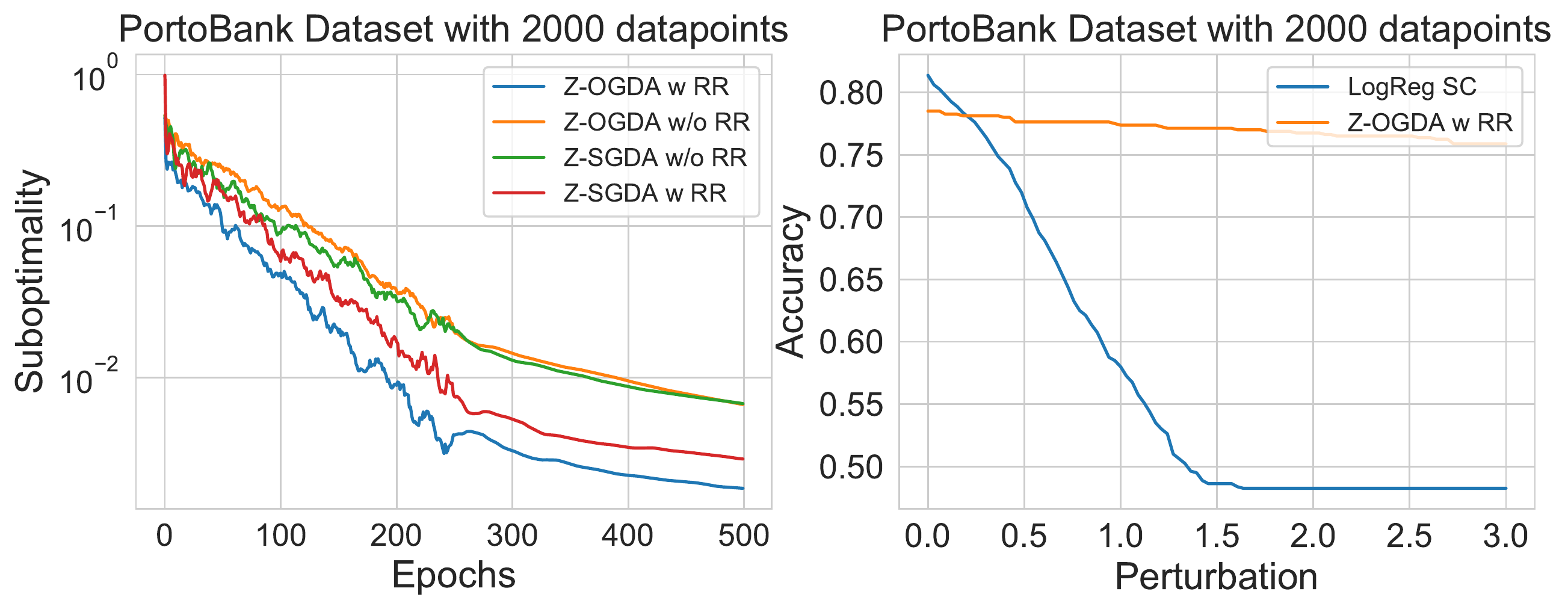}
    \caption{Experimental results for a balanced PortoBank dataset with \(n=2000\). (Left pane) Suboptimality iterates generated by the four algorithms \ref{OGDARR}, \ref{OGDAS}, \ref{SGDARR}, \ref{SGDAS}, respectively denoted as \emph{Z-OGDA w RR}, \emph{Z-OGDA w/o RR}, \emph{Z-SGDA w RR}, \emph{Z-SGDA w/o RR}. (Right pane) Comparison between decay in accuracy of strategic classification with logistic regression (originally trained with \(\zeta=0.05\)) and Alg. \ref{OGDARR} with change in perturbation.}
    \label{fig:Porto}
\end{figure}
\subsubsection{Effect of \(n,d\) on sample complexity}
In this part, we demonstrate the empirical results that corroborates the theoretical dependence of sample complexity on \(n,d\). For this purpose, we use synthetic dataset which is generated as per the method described in Section 6.1. Here we work in the setting where \(n\in \{500,1000,1500,2000\}\) and \(d\in \{10,15,20,25\}\). We fix the suboptimality to \(\epsilon = 0.1\) and compute the number of samples required in each of the settings of \(n\) and \(d\) so that the iterates reach the \(\epsilon-\)suboptimality. We present the results in Figure \ref{fig:DependenceOnN}.  

\begin{figure}
    \centering
    \includegraphics[scale=0.36]{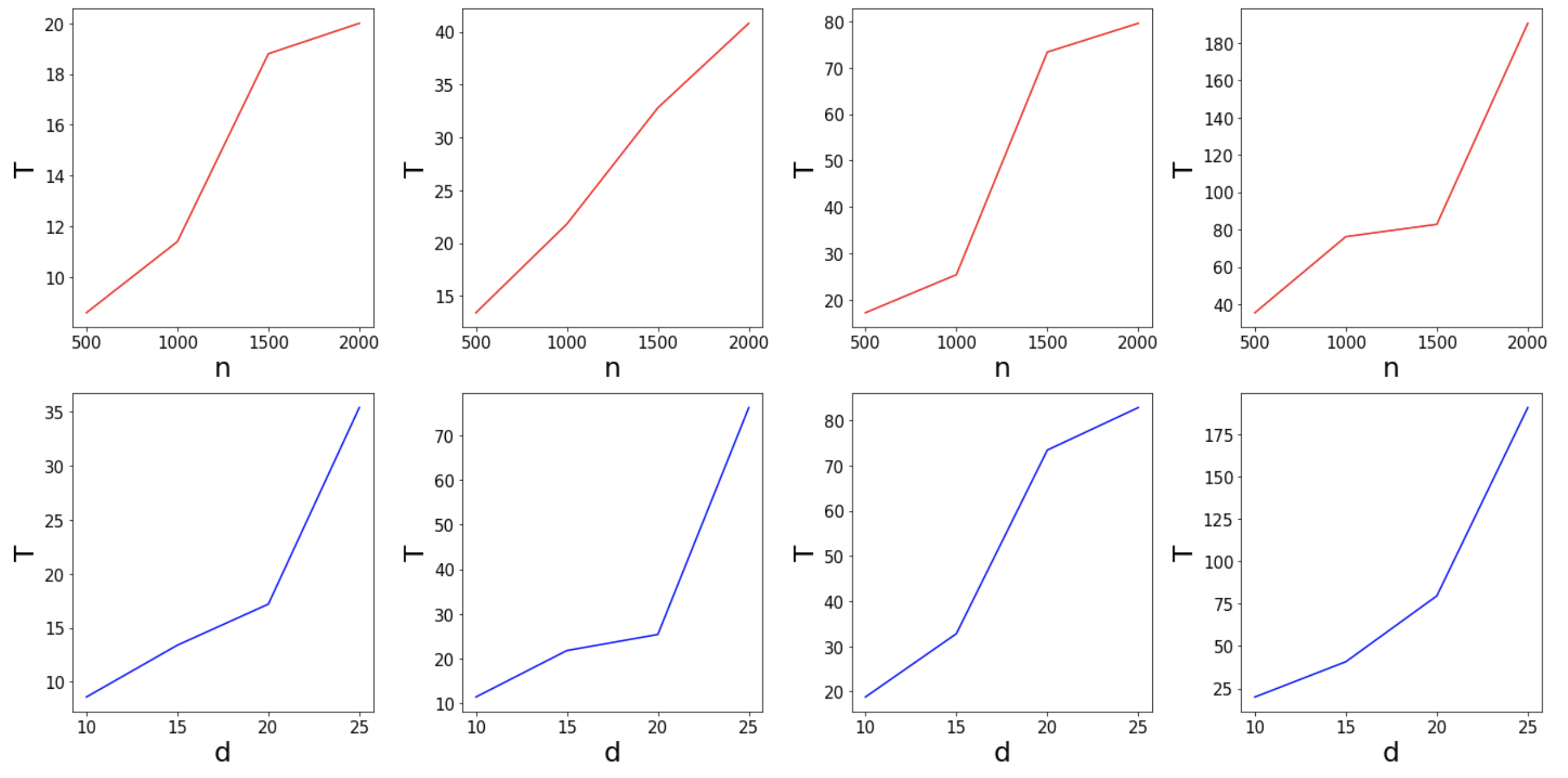}
    \caption{Experimental results presenting the number of samples required to reach \(\epsilon-\)suboptimality, with \(\epsilon=0.1\), for our algorithm \ref{OGDARR} on synthetic dataset with varying values of \(n\in\{500,1000,1500,2000\}\) and \(d\in\{10,15,20,25\}\).}
    \label{fig:DependenceOnN}
\end{figure}


\subsection{Logistic regression as a Generalized linear model} 
\label{subsec: Logistic Regression as Generalized linear model}





The goal in logistic regression is to maximize the log-likelihood of the conditional probability of \(y\) (the \emph{label}) given \(x\) (the \emph{feature}). In this model, it is assumed that:
\[
P(Y=1|x,\theta) = \frac{1}{1+\exp(-\lara{x,\theta})}
\]
This implies that:
\[
P(Y=-1|x,\theta) = \frac{\exp(-\lara{x,\theta})}{1+\exp(-\lara{x,\theta})}
\]

Given a data point \((x,y)\) the logistic loss is log-likelihood of  observing \(y\) given \(x\). For any \(\theta\) and \(y\in\{-1,1\}\):
\[
P(Y=y|x;\theta)  = \lr{P(Y=1|x,\theta)}^{\frac{1+y}{2}} \lr{P(Y=-1|x,\theta)}^{\frac{1-y}{2}}
\]

Now, the log-likelihood is given by:
\begin{align*}
L(x,y;\theta) &= \log(P(Y=y|x;\theta)) \\
&=  \frac{1+y}{2} \log\lr{\frac{1}{1+\exp(-\lara{x,\theta})}} +  \frac{1-y}{2}\log\lr{\frac{\exp(-\lara{x,\theta})}{1+\exp(-\lara{x,\theta})}} \\
&= -\frac{1-y}{2}\lara{x,\theta} + \lr{\frac{1+y}{2}+\frac{1-y}{2}} \log\lr{\frac{1}{1+\exp(-\lara{x,\theta})}} \\ 
&= -\frac{1-y}{2}\lara{x,\theta}  - \log(1+\exp(-\lara{x,\theta})) \\ 
&= \frac{y}{2}\lara{x,\theta} -\frac{1}{2}\lara{x,\theta} + \lara{x,\theta} - \log(1+\exp(\lara{x,\theta})) \\ 
&= \frac{y}{2}\lara{x,\theta}+ \frac{1}{2}\lara{x,\theta}- \log(1+\exp(\lara{x,\theta}))
\end{align*}

The goal is to maximize the log likelihood, which is equivalent to minimizing the negative log likelihood. Thus the logistic regression minimizes the following loss:
\[
\td{L}(x,y;\theta) = -L(x,y;\theta) = - \frac{y}{2}\lara{x,\theta} + \phi(\lara{x,\theta})
\]
where \(\phi(\beta) = \log(1+\exp(\beta))-\frac{\beta}{2}\). If \(y = 1\), then the above loss becomes:
\[
\log(1+\exp(\lara{x,\theta})) - \lara{x,\theta} = \log(1+\exp(-\lara{x,\theta}))
\]
Otherwise, if \(y = -1\), then the above loss becomes $\log(1+\exp(\lara{x,\theta}))$. Thus, the above loss is equivalent to \(\log(1+\exp(-y\lara{x,\theta}))\). 

\newpage

\end{document}